\documentclass[11pt]{amsart}
\pdfoutput=1
\usepackage[utf8]{inputenc}
\usepackage[T1]{fontenc}
\usepackage{lmodern}

\usepackage{amssymb}
\usepackage{mathtools}
\usepackage{amscd}
\usepackage[mathscr]{eucal}
\usepackage{enumitem}
\usepackage[hmargin=1in,vmargin=1in]{geometry}

\usepackage{graphicx}
\usepackage{color}

\usepackage[roman]{sublabel}

\usepackage[labelfont=bf,font=small]{caption}
\usepackage{subcaption}

\usepackage{cite}
\usepackage[foot]{amsaddr}
\usepackage{comment}

\definecolor{link_blue}{rgb}{0,0,0.97}
\definecolor{cite_red}{rgb}{0.7,0,0}
\usepackage[colorlinks=true,citecolor=cite_red,linkcolor=link_blue]{hyperref}

\usepackage{ottmacros-Gaussian-dSDEs-v11}

\DeclareMathOperator{\Cov}{Cov}

\DeclareMathOperator{\Tube}{Tube}
\DeclareMathOperator{\Grid}{Grid}

\theoremstyle{plain}
\newtheorem{theorem}{Theorem}[section]
\newtheorem{corollary}[theorem]{Corollary}

\newtheorem{proposition}[theorem]{Proposition}

\theoremstyle{definition}
\newtheorem{definition}[theorem]{Definition}
\newtheorem{notation}[theorem]{Notation}





\begin{document}

\title{Large deviations for Gaussian diffusions with delay}

\author{Robert Azencott, Brett Geiger and William Ott}
\address{University of Houston}


\keywords{Gaussian process, diffusion, delay, large deviations, optimal transition path, chemical Langevin equation, linear noise approximation, bistable genetic switch}

\subjclass[2010]{60F10, 60G15, 60H10}

\date{\today}

\begin{abstract}
Dynamical systems driven by nonlinear delay SDEs with small noise can exhibit important rare events on long timescales.  When there is no delay, classical large deviations theory quantifies rare events such as escapes from metastable fixed points.  Near such fixed points, one can approximate nonlinear delay SDEs by linear delay SDEs.  Here, we develop a fully explicit large deviations framework for (necessarily Gaussian) processes $X_t$ driven by linear delay SDEs with small diffusion coefficients.  Our approach enables fast numerical computation of the action functional controlling rare events for $X_t$ and of the most likely paths transiting from $X_0 = p$ to $X_T=q$.  Via linear noise local approximations, we can then compute most likely routes of escape from metastable states for nonlinear delay SDEs.  We apply our methodology to the detailed dynamics of a genetic regulatory circuit, namely the co-repressive toggle switch, which may be described by a nonlinear chemical Langevin SDE with delay.
\end{abstract}

\maketitle

\thispagestyle{empty}

\section{Introduction}
\label{s:intro}

Dynamical processes are often influenced by small random fluctuations acting on a variety of spatiotemporal scales.
Small noise can dramatically affect the underlying deterministic dynamics by transforming stable states into metastable states and giving positive probability to rare events of high interest, such as excursions away from metastable states or transitions between metastable states.
These rare events play important functional roles in a wide range of applied settings, including genetic circuits~\cite{Eldar-Elowitz-Functional-2010},  molecular dynamics, turbulent flows~\cite{Bouchet-2014-Langevin}, and other systems with multiple timescales~\cite{Bouchet-2016-fast-slow}.

The main goal of this paper is to present an {\bfseries \itshape explicit} computational and theoretical large deviations analysis of rare events for Gaussian diffusion processes with {\bfseries \itshape delays}.
We are motivated in part by the importance of delay for the dynamics of genetic regulatory circuits. Indeed, we apply our approach to  a {\bfseries \itshape bistable genetic switch} driven by a delay stochastic differential equation (delay SDE) of Langevin type.

Consider a family of random processes $X^{\vep }(t) \in \mbb{R}^d$ indexed by a small parameter $\vep >0$  and driven by the following generic small-noise SDE with drift $b$, diffusion $\sqrt{\vep } \sigma $, and {\bfseries \itshape no delays}:
\begin{equation*}
\mrm{d} X^{\vep } (t) = b(X^{\vep } (t)) \, \mrm{d} t + \sqrt{\vep } \sigma (X^{\vep } (t)) \, \mrm{d} W(t).
\end{equation*}
Large deviations theory for SDEs of this form was developed by Freidlin and Wentzell~\cite{Freidlin-Wentzell-2012}.
Freidlin-Wentzell theory estimates the probability that the process $X^{\vep } (t)$ lies within a small tube around any given continuous path $\psi \in C([0,T])$ taking values in $\mbb{R}^{d}$ in terms of the {\bfseries \itshape action} $S_{T} (\psi )$ of $\psi $:
\begin{equation*}
\mbb{P}_{x} \vset{ \sup_{0 \leqs t \leqs T} \vnorm{ X^{\vep } (t) - \psi (t) }_{\mbb{R}^{d}} \leqs \delta } \approx \exp \left( - \vep^{-1} S_{T} (\psi ) \right).
\end{equation*}
Here $\mbb{P}_{x}$ denotes probability conditioned on $X^{\vep } (0) = x$ and we assume that $\psi (0) = x$.

The Freidlin-Wentzell action functional $0 \leqs S_T(\psi) \leqs \infty$ was originally defined for uniformly bounded coefficients $b,\sigma$ and uniformly elliptic $\sigma \sigma^*$ by an explicit  time integral involving $b(\psi_t)$, $\sigma \sigma^*(\psi_t)^{-1}$, and $\psi'_t$. These remarkable  results were widely extended by S. Varadhan \cite{Azencott-F-V} to arbitrary sets of trajectories and by R. Azencott \cite{Azencott-F-V} to hypoelliptic diffusions with unbounded coefficients. Numerous extensions and applications to broad classes of stochastic processes have been published by D. Stroock, R. Ellis, A. Dembo, O. Zeitouni, G. Dupuis, and many others.
For SDEs with delays, large deviations principles have been established or reasonably justified under  a variety of hypotheses ~\cite{Mohammed-Zhang-ld-2006, Mo-Luo-ld-2013, Bao-Yuan-ld-2015, Chiarini-Fischer-ld-2014, Kushner-ld-2010, Puhalskii-degenerate-ld-2004, Gadat-Panloup-Pellegrini-2013}. 

For fixed time $T$ and points $p,q$ in the state space, the path $\hat{\psi }$ that minimizes  $S_{T}(\psi)$ (under the constraints $\psi (0) = p$ and $\psi (T) = q$) is the most likely transition path starting at $p$ and reaching $q$ at time $T$. A second minimization over $T$ provides the most likely transition path from $p$ to $q$ and the energy $V(p,q)$ associated with this optimal path.
Often called the quasi-potential, $V$ is central to the quantification of large deviations on long timescales~\cite{Freidlin-Wentzell-2012}.

A computational framework has been developed for the application of Freidlin-Wentzell theory to systems with no delays.
This framework includes the minimum action method~\cite{E-Ren-EVE-2004-MAM}, an extension called the geometric minimum action method that synthesizes the minimum action method and the string method~\cite{Heymann-EVE-2008-gMAM}, as well as variants of these approaches (see {\itshape e.g.}~\cite{Li-Li-Zhou-constrained-MAM-2016, Lindley-Schwartz-2013-IAMM}).

For nonlinear delay SDEs, it is possible to compute a linear noise approximation~\cite{Brett-Galla-PRL-2013} that is valid in a neighborhood of a given metastable state.
Since linear noise approximations are Gaussian diffusions with delays, we have deliberately focused the present paper on Gaussian diffusions with delays.
For such diffusions, we rigorously develop and implement a {\itshape \bfseries fully explicit} large deviation framework, enabling fast numerical computation of optimal transition paths and the quasi-potential.
Our methodology does not require the numerical solution of Hamilton-Jacobi equations, a significant positive since Hamilton-Jacobi equations are computationally costly in even moderately-high spatial dimension.

We thus center our study on the It\^{o} delay SDE
\begin{equation}
\label{e:dSDE-linear}
\left\{
\begin{aligned}
\mrm{d} X^{\vep }_{t} &= (a + B X^{\vep }_{t} + C X^{\vep }_{t - \tau }) \, \mrm{d} t + \varepsilon \Sigma \, \mrm{d} W_{t},
\\
X^{\vep }_{t} &= \gamma (t) \text{ for } t \in [- \tau , 0].
\end{aligned}
\right.
\end{equation}
Here $X^{\vep }_{t} \in \mbb{R}^{d}$, $t$ denotes time, $\tau \geqs 0$ is the delay, $a \in \mbb{R}^{d}$, $B$ and $C$ are real $d \times d$ matrices, $W_{t}$ denotes standard $n$-dimensional Brownian motion, $\Sigma \in \mbb{R}^{d \times n}$ denotes the diffusion matrix, and $\varepsilon > 0$ is a small noise parameter.
The initial history of the process is given by the Lipschitz continuous curve $\gamma : [- \tau , 0] \to \mbb{R}^{d}$.
We work with a single fixed delay to simplify the presentation – all of our results apply just as well to multiple fixed delays and to delays distributed over a finite time interval.

The Gaussian diffusion~\eqref{e:dSDE-linear} arises via linear noise approximation of nonlinear delay SDEs near metastable states in the following way.
Suppose the nonlinear delay SDE
\begin{equation*}
\mrm{d} x_{t} = f(x(t),x(t - \tau )) \, \mrm{d}t + \vep g(x(t),x(t - \tau )) \, \mrm{d} W_{t}
\end{equation*}
has a metastable state $z$; that is, $z$ is a stable fixed point of the deterministic limit ODE
\begin{equation*}
\mrm{d} x_{t} = f(x(t),x(t - \tau )) \, \mrm{d}t.
\end{equation*}
Writing $x(t) = z + \xi (t)$ and expanding $f$ and $g$ around $z$ yields the linear noise approximation
\begin{equation*}
\label{eq:GaussApprox}
\mrm{d} \xi_{t} = \left[ D_{1} f (z, z) \xi (t) + D_{2} f (z, z) \xi (t - \tau ) \right] \mrm{d} t + \vep g(z, z) \, \mrm{d} W_{t},
\end{equation*}
where $D_{1}$ and $D_{2}$ denote differentiation with respect to the first and second sets of $d$ arguments, respectively.
This is~\eqref{e:dSDE-linear} with $a = \mbold{0}$, $ B= D_{1} f (z, z)$, $C = D_{2} f (z, z)$, and $\Sigma = g(z,z)$.

We demonstrate the utility of our approach by computing optimal escape trajectories for the co-repressive toggle switch, a bistable genetic circuit driven by a nonlinear delay Langevin equation.

The paper is organized as follows.
In Section~\ref{s:theory}, we review the theory of large deviations for Gaussian processes and present optimal transition path theory for~\eqref{e:dSDE-linear}.
We detail our numerical implementation of this theory in Section~\ref{s:implementation}.
Section~\ref{s:LNAs} discusses the general idea of linear noise approximation.
We present our computational study of a bistable genetic toggle switch in Section~\ref{s:toggle}.

\section{Theory}
\label{s:theory}

In this section we develop a rigorous large deviations framework for~\eqref{e:dSDE-linear}.

\subsection{Outline}
\label{ss:outline}

For brevity,  we will often omit the superscript $\vep$, writing $X_t$ instead of $X_{t}^{\vep}$. 
We first show that the process $X_{t}$  driven by~\eqref{e:dSDE-linear} is in fact a Gaussian process (Section~\ref{ss:Gaussian}). This is expected since~\eqref{e:dSDE-linear} is linear, but not obvious because of the presence of delay.
Since $X_{t}$ is a Gaussian process, it is completely determined by its mean $m(t) = \mbb{E} [X_{t}]$ and covariance matrices $\rho(s,t)= \mbb{E} [X_{s} X_{t}^{*}] - m(s) m(t)^{*}$.
Here $\ast $ denotes matrix transpose.
We derive and analytically solve delay ODEs verified by the mean and covariance matrices of $X_t$ in Sections~\ref{ss:stats}--\ref{ss:covariance-solution}. 

We center $X_{t}$ by writing $X_{t} = \mbb{E} [X_{t}] + \vep Z_{t}$. The probability distribution $\nu$ of $Z_{t}$ is a centered Gaussian measure on the space of continuous paths $f$ starting at $\mbold{0}$. To apply large deviations theory to the paths of $Z_t$ and $X_t$ as $\vep \to 0$, one needs to compute the action functional (or Cramer transform), $f \to \lambda(f)$, of $\nu$. Classical large deviations results for centered Gaussian measures on Banach spaces express $\lambda(f)$ in terms of the integral operator determined by  the matrix-valued covariance function $\rho(s,t)$ of $Z_{t}$ (see Sections~\ref{ss:general-ld-framework}--\ref{ss:Gaussian-Process}). Here we derive explicit formulas and implementable computational schemes that allow us to numerically evaluate $\lambda(f)$.
We complete this program by explicitly deriving, as $\vep \to 0$, the most likely transition path of $X_t$ between two points $p$ and $q$. 
We achieve this by minimizing $\lambda(f)$ under appropriate constraints.

We finish the outline by introducing notation that will be used throughout Section~\ref{s:theory}.

\begin{notation}
\label{E-and-H}
For a matrix $M$ and vector $v \in \mbb{R}^{d}$, let $\norm{M}$ and $\norm{v}_{\mbb{R}^{d}}$ denote matrix norm and Euclidean norm, respectively.
The scalar product of vectors $v, w \in \mbb{R}^{d}$ is denoted $\langle v, w \rangle_{\mbb{R}^{d}}$.

Let $H = L^2([0, T])$ be the Hilbert space of $\mbb{R}^d$-valued measurable functions $f$ on $[0,T]$ such that $\vnorm{f(t)}_{\mbb{R}^{d}}$ is square-integrable.  The Banach space of $\mbb{R}^d$-valued continuous functions $f$ on $[0, T]$ is denoted $C([0,T])$.
Let $E = C_{\mbold{0}} ([0,T])$ be the Banach subspace of all $f \in C([0,T])$ such that $f(0)= \mbold{0}$. 
Endow these three spaces with their Borel $\sigma$-algebras. 
\end{notation}

\subsection{The solution of~\eqref{e:dSDE-linear} is a Gaussian process}
\label{ss:Gaussian}

\begin{proposition}
\label{p:Gaussian-sol}
The delay SDE ~\eqref{e:dSDE-linear} has a unique strong solution $X_{t} \in \mbb{R}^d$. The process $X_{t}$ is Gaussian with almost surely continuous paths, and hence has a surely continuous version. 
\end{proposition}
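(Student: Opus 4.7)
The plan is to solve~\eqref{e:dSDE-linear} by the classical method of steps on the intervals $I_k = [k\tau, (k+1)\tau]$ for $k = 0, 1, 2, \ldots$, and to maintain, as an inductive invariant, that on $[0, k\tau]$ the process $X_t$ admits a representation
\[
X_t = \mu(t) + \int_0^t K(t,s) \, \mathrm{d}W_s
\]
with deterministic continuous $\mu : [0, k\tau] \to \mathbb{R}^d$ and deterministic matrix kernel $K$. Once this invariant is propagated to $[0, T]$, every finite-dimensional distribution of $(X_t)$ is automatically Gaussian, since Wiener integrals of deterministic integrands span a Gaussian subspace of $L^2(\Omega)$ that is closed under finite deterministic linear combinations.

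On the base interval $I_0$, the delay is absorbed into a prescribed forcing and~\eqref{e:dSDE-linear} becomes the non-delay linear SDE
\[
\mathrm{d}X_t = \bigl(a + B X_t + C \gamma(t - \tau)\bigr) \, \mathrm{d}t + \varepsilon \Sigma \, \mathrm{d}W_t, \qquad X_0 = \gamma(0),
\]
whose coefficients are Lipschitz with a bounded deterministic inhomogeneity (using Lipschitz continuity of $\gamma$). Standard It\^{o} theory yields a unique strong solution given by the variation-of-constants formula
\[
X_t = e^{Bt}\gamma(0) + \int_0^t e^{B(t-s)} \bigl(a + C \gamma(s - \tau)\bigr) \, \mathrm{d}s + \varepsilon \int_0^t e^{B(t-s)} \Sigma \, \mathrm{d}W_s,
\]
which realizes the invariant on $I_0$; the Wiener integral against the continuous deterministic kernel admits an a.s.\ continuous version via the It\^{o} isometry and Kolmogorov's criterion, or more directly by continuity of the square-integrable martingale $\int_0^t e^{-Bs} \Sigma \, \mathrm{d}W_s$. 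For the inductive step, assume the invariant on $[0, k\tau]$. On $I_k$ the SDE is again linear in $X_t$ with forcing $a + C X_{t-\tau}$, where $X_{t-\tau}$ for $t \in I_k$ is a deterministic affine functional of $W|_{[0, k\tau]}$ by hypothesis. Variation of constants on $I_k$, using $X_{k\tau}$ as initial condition, gives a unique strong continuation in which both the initial condition and the drift forcing are linear in previously constructed Wiener integrals; combining these with the new stochastic integral $\varepsilon \int_{k\tau}^t e^{B(t-s)} \Sigma \, \mathrm{d}W_s$ preserves the desired representation on $I_k$.

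The main subtlety, in my view, is not existence or uniqueness, which at each step reduces to standard Lipschitz theory for non-delay linear SDEs, but rather ensuring that Gaussianity is \emph{joint} across all intervals and that the a.s.\ continuous versions obtained on consecutive $I_k$'s glue together into a single a.s.\ continuous process on $[0, T]$. Carrying the explicit representation $X_t = \mu(t) + \int_0^t K(t,s) \, \mathrm{d}W_s$ through the induction handles both issues simultaneously: joint Gaussianity follows from closure of the Wiener-integral Gaussian family under finite deterministic linear combinations, and pathwise continuity at the junctions $t = k\tau$ is automatic from continuity on each closed piece together with the matching endpoint $X_{k\tau}$ used to initialize the next step. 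The final claim that $X_t$ has a surely continuous version is then the usual modification argument on the event of full measure where all pieces are continuous.
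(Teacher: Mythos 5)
Your proposal is correct in substance, but it proves the proposition by a genuinely different route than the paper. You use the method of steps: on each interval $[k\tau,(k+1)\tau]$ the delay term is a previously constructed process, so variation of constants gives a unique strong continuation, and you carry inductively the explicit representation $X_t=\mu(t)+\int_0^t K(t,s)\,\mathrm{d}W_s$ with deterministic $\mu$ and $K$, from which joint Gaussianity and pathwise continuity (including gluing at the junctions $t=k\tau$) follow at once. The paper instead takes the existence and uniqueness of the strong solution as classical, proves Gaussianity by approximating $X_t$ with its Euler--Maruyama discretization, which is Gaussian by construction, and passing the Gaussian property through the $L^2$ sup-norm convergence of the scheme (citing Mao--Sabanis), and then obtains almost sure continuity from a Fernique-type criterion applied to the covariance increments. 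Your route buys an explicit Gaussian (Wiener-integral) representation, so continuity is automatic and the mean/covariance structure that the paper later derives via delay ODEs is implicit in $\mu$ and $K$; the paper's route avoids all kernel bookkeeping and rests on a citable discretization theorem, at the price of being less explicit. One step you should state rather than leave implicit in ``combining these \ldots preserves the desired representation'': in the inductive step the term $\int_{k\tau}^{t} e^{B(t-u)} C X_{u-\tau}\,\mathrm{d}u$ contains an iterated integral $\int_{k\tau}^{t} e^{B(t-u)} C \bigl(\int_0^{u-\tau} K(u-\tau,s)\,\mathrm{d}W_s\bigr)\mathrm{d}u$, and rewriting it as a single Wiener integral $\int_0^{t-\tau}\widetilde{K}(t,s)\,\mathrm{d}W_s$ requires interchanging the Lebesgue integral in $u$ with the stochastic integral in $s$; since the kernel is deterministic and square-integrable this is licensed by the stochastic Fubini theorem (or by an $L^2$ Riemann-sum limit of jointly Gaussian variables), but it is the one nontrivial justification your induction needs.
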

\begin{proof}[Proof of Proposition~\ref{p:Gaussian-sol}] The existence of a unique strong solution $X_{t}$ is classical (see \textit{e.g.}~\cite{Mao-Stochastic-1997}).
To prove that $X_{t}$ is Gaussian, we consider Euler-Maruyama discretizations~\cite{Higham-2001} of $X_t$. For positive integers $N$, let $\Delta = \tau / N$ denote timestep size. 
The Euler-Maruyama approximate solution $Y^{(\Delta )}_{t}$ to~\eqref{e:dSDE-linear} is defined first at nonnegative integer multiples of $\Delta $ by
\begin{equation*}
Y^{(\Delta )}_{(k+1) \Delta } = Y^{(\Delta )}_{k \Delta } + (a + B Y^{(\Delta )}_{k \Delta } + C Y^{(\Delta )}_{k \Delta - \tau }) \Delta + \varepsilon \Sigma (W_{(k+1) \Delta } - W_{k \Delta }).
\end{equation*}
Then $Y^{(\Delta )}_{t}$ is extended to $[0,T]$ by linear interpolation. The convergence of this discretization scheme is well-known (see \textit{e.g.}~\cite{Baker-Buckwar-2000, Mao-Sabanis-2003}) and Theorem~2.1 of~\cite{Mao-Sabanis-2003} yields 
\begin{equation*}
\lim_{\Delta \to 0} \mbb{E} \left[ \sup_{0 \leqs t \leqs T} \vnorm{Y^{(\Delta )}_{t} - X_{t}}_{\mbb{R}^{d}}^{2} \right] = 0.
\end{equation*}
Since $Y^{(\Delta )}_{t}$ is a Gaussian process by construction, this $L^{2}$-convergence implies that $X_{t}$ is a Gaussian process as well. 
The expected values $\mbb{E} [X_{t}]$ are then finite and clearly bounded due to the delay SDE driving $X_t$. Again using the delay SDE, this implies that the covariance matrix $\rho(s,t)$ of $X_s$ and $X_t$ remains bounded by a constant multiple of $|t-s|$. A classical result of Fernique for Gaussian processes implies then that $X_t$ is almost surely continuous. 
\end{proof}

\subsection{Delay ODE for  the mean of $X_{t}$}
\label{ss:stats}
Writing~\eqref{e:dSDE-linear} in integral form, we have
\begin{equation}
\label{e:dSDE-linear-int}
X_t = X_0 + \int_0^t(a + B X_z + C X_{z-\tau}) \, \mrm{d}z + \varepsilon \Sigma W_{t}.
\end{equation}
Taking the expectation of~\eqref{e:dSDE-linear-int} and applying Fubini gives
\begin{equation*}
m(t) = m(0) + \int_0^t(a + B m(z) + C m(z-\tau)) \, \mrm{d}z,
\end{equation*}
or, in differential form, a delay ODE for $m(t)$:
\begin{equation}
\label{eq:mean}
\left\{
\begin{aligned}
m'(t) &= a + B m(t) + C m(t - \tau ),
\\
m(t) &= \gamma (t) \text{ for } t \in [-\tau , 0].
\end{aligned}
\right.
\end{equation}
\subsection{The centered Gaussian process $Z_{t}$}
The process $X_t$ is clearly not centered in general. The centered process $Z_t \in \mbb{R}^d$ defined by $X_t = m(t) + \varepsilon Z_t$ is a centered Gaussian diffusion with delay. Since $X_t$ verifies~\eqref{e:dSDE-linear} and $m(t)$ verifies~\eqref{eq:mean}, elementary algebra shows that $Z_t$ verifies the delay SDE
\begin{equation}
\label{e:dSDE-linear-centered}
\left\{
\begin{aligned}
\mrm{d} Z_{t} &= (B Z_{t} + C Z_{t - \tau }) \, \mrm{d} t + \Sigma \, \mrm{d} W_{t},
\\
Z_{t} &= \mbold{0} \text{ for } t \in [- \tau , 0].
\end{aligned}
\right.
\end{equation}
Note that this delay SDE \emph{does not depend on} $\vep$. The same is then true for $Z_t$. This is a crucial point further on because our key large deviations estimates for $\vep \to 0$ will be stated in path space for the “small” centered Gaussian process $\vep Z_t$. 
As we will see, our large deviations computations will ultimately involve the deterministic mean path $m(t)$ of $X_{t}$ and the covariance  function $\rho(s,t)$ of the process $Z_t$. 
\subsection{Delay ODEs for  the covariances  of $Z_{t}$}
\label{ss:covstats}

We now find delay ODEs for the covariance function of $Z_t$.
Let
\begin{equation} \label{rho}
\rho(s,t) = \mbb{E}[Z_s Z_t^{*}] 
\end{equation}
be the covariance matrix of $Z_s$ and $Z_t$, where superscript $*$ denotes matrix transpose.
Since the history of $Z_{t}$ anterior to $t=0$ is deterministic, $\rho(s,t) = 0$ when either $s$ or $t$ is in $[-\tau , 0]$.
Fix $t \in [0,T]$, and let $s$ vary. We have
\begin{gather}
\mbb{E}[Z_s Z_t^{*}] = \int_0^s  \left( B\mbb{E}[Z_u Z_t^{*}] + C \mbb{E}[Z_{u-\tau}Z_t^{*}] \right) \mrm{d}u + \Sigma \mbb{E}[W_{s}Z_t^{*}].
\notag
\end{gather}
We thus obtain
\begin{gather}
\begin{aligned}
\rho(s,t) &= \int_0^s (B \rho (u,t) + C \rho (u-\tau, t)) \, \mrm{d}u +  \Sigma \mbb{E}[W_{s}Z_t^{*}].
\\
\end{aligned}
\label{e:rho-integral}
\end{gather}
Let $G(s,t) = \mbb{E}[W_{s}Z_t^{*}].$
Differentiating $\rho(s,t) $ with respect to $s$ gives
\begin{equation}
\label{eq:cov}
\frac{\partial \rho}{\partial s}(s,t) = B \rho (s,t) + C \rho (s-\tau,t) + \Sigma \frac{\partial G}{\partial s}(s,t),
\end{equation}
which is a first-order delay ODE in $s$  for each fixed $t$. 
To close~\eqref{eq:cov}, we compute a differential equation for $\frac{\partial G}{\partial s}(s,t)$. 
Proceeding as just done for~\eqref{e:rho-integral}, one checks that the function $G(s,t)$ satisfies the delay ODE
\begin{equation}
\label{eq:Gfunc}
\frac{\partial G}{\partial t}(s,t) = 
\begin{cases}
G(s,t)B^{*} + G(s,t-\tau)C^{*} +  \Sigma^{*} & (t\leqs s),\\
G(s,t)B^{*} + G(s,t-\tau)C^{*} & (t>s),
\end{cases}
\end{equation}
where $G(s,t) = 0$ for $t\in [-\tau,0].$ Let $H(x)$ denote the Heaviside function 
\begin{equation*}
H(x) = 
\begin{cases}
0, &\text{if } x < 0;\\
1, &\text{if } x \geqs 0.
\end{cases}
\end{equation*}
We can rewrite~\eqref{eq:Gfunc} as
\begin{equation}
\label{e:Gfunc-heavy}
\frac{\partial G}{\partial t}(s,t) = G(s,t)B^{*} + G(s,t - \tau)C^{*} + \Sigma^{*} H(s - t).
\end{equation}
Note that the  partial derivative of the Heaviside distribution  $H(s-t)$ is classically given by
$$
\frac{\partial H}{\partial s}(s-t) = \delta(s-t),
$$
where the distribution $\delta$ is the Dirac point mass concentrated at zero.
By definition of $G(s,t)$ and by~\eqref{e:Gfunc-heavy}, the function  $G(s,t)$ is continuous for all $s$ and $t$ and differentiable in $s$ and $t$ for  $s \neq t$.  
For $s \neq t$, we write
\begin{equation}
\label{e:Fst}
F(s,t) = \frac{\partial G}{\partial s}(s,t),
\end{equation} 
and observe that $F$ verifies the initial condition $F(s,t) = 0$ for  $s \neq t$ and $t \in [-\tau ,0]$.

Differentiating~\eqref{e:Gfunc-heavy} in $s$ for $s\neq t$ and switching the order of  partial derivatives yields a linear delay ODE in $t >0$ for $F(s,t)$, namely
\begin{equation}
\label{eq:Gfunc-simplified}
\frac{\partial F}{\partial t}(s,t) = F(s,t)B^{*} + F(s,t - \tau)C^{*} + \Sigma^{*}\delta(s - t),
\end{equation}
with initial condition $F(s,t) = 0$ for all $t\in [-\tau,0]$.

Once $F(s,t)$ is determined, the covariance $\rho(s,t)$ for each fixed $t\in [0,T]$ will be computed by solving  the delay ODE
\begin{equation}
\label{e:cov-simplified}
\frac{\partial \rho}{\partial s}(s,t) = B \rho (s, t) + C \rho (s-\tau, t) +  \Sigma F(s,t).
\end{equation}
We now describe how to successively solve the delay ODEs driving  $m(t), F(s,t),$ and $\rho (s,t)$. 

\subsection{Analytical solution of the delay ODE verified by the mean}

First-order delay ODEs can be analytically solved by a natural stepwise approach, sometimes called the ``method of steps,'' a terminology which we will avoid since it is has a different meaning in classical numerical analysis. 
The basic idea is to convert each one of our delay ODEs into a finite sequence of nonhomogeneous ODEs in which the delay terms successively become known terms. 

Consider first the delay ODE (\ref{eq:mean}) for $m(t)$ with $t \in [-\tau,T]$. The delay term $C m(t-\tau)$ is unknown for $t \in (\tau,T]$ but is known for $t\in [0,\tau]$. So we can solve the delay ODE (analytically or numerically) on the interval $[0,\tau]$ as a linear nonhomogeneous first-order ODE. Then, for $t \in [\tau,2\tau]$, the delay term in the delay ODE has just been computed, so this delay ODE once again becomes a linear nonhomogeneous first-order ODE.
\begin{notation}
\label{Jk} 
To get a solution on the whole of $[0,T]$, one successively solves the delay ODE on closed intervals $J_k = [k \tau, (k+1) \tau]$ with $k = -1, 0, 1, 2, \dots, N$. Here $N= \lfloor{\frac{T}{\tau}}\rfloor$ is the largest integer inferior or equal to $\frac{T}{\tau}$, and $J_N = \left[ N \tau, T \right].$
\end{notation}
We now compute the explicit solution for the mean $m(t)$ on $[-\tau,T].$ Let $m_k$ denote the solution of~\eqref{eq:mean} on the interval $J_k$. For $k = -1$, we have $m_{-1} = \gamma$ on $J_{-1}$. For $k = 0$, $m_0$ is the solution of~\eqref{eq:mean} on $J_0$:
\begin{equation*}
\left\{
\begin{aligned}
m_0'(t) &= a + B m_0(t) + C \gamma(t - \tau ),
\\
m_0(0) &= \gamma(0).
\end{aligned}
\right.
\end{equation*}
We thus have 
\begin{equation*}
m_0(t) = e^{t B} \int_{0}^t \, e^{- u B} \left( a + C \gamma(u - \tau) \right) \mrm{d}u + e^{t B} \gamma(0).
\end{equation*}
Similarly, given $m_{k-1}$ on $J_{k-1}$, $m_k$ has the form
\begin{equation*}
m_k(t) = e^{(t - k \tau) B} \int_{k \tau}^t \, e^{- (u - k \tau) B} \left(a + C m_{k-1}(u - \tau) \right) \, \mrm{d}u + e^{(t - k \tau) B} m_{k-1}(k \tau).
\end{equation*}
Piecing together the $m_k$ yields the full solution $m$ on all of $[-\tau,T]$.

Note that many characteristics of the given history function $\gamma$, such as continuity, differentiability, discontinuities, etc.,  will essentially propagate through to the full solution $m(t)$. More precisely, if $\gamma$ is of class $ C^q$ for some integer $q \geqs 0$, then  $m(t)$ will be of class $q+1$ for all positive $t$ except possibly at integer multiples of $\tau.$ Since we assume here that $\gamma$ is Lipschitz continuous, $m(t)$ will be differentiable except possibly at integer multiples of $\tau$.

\subsection{Analytical solutions of the delay ODEs verified by $F(s,t)$ and $\rho(s,t)$}
\label{ss:covariance-solution}
We can extend the preceding method to the delay ODE in $t$ verified by $F(s,t)$ for each fixed $s$ and then to the delay ODE in $s$ verified by $\rho(s,t)$. We first focus on  $F(s,t)$. 
Fix $s\in [0,T]$. Due to  the delay ODE~\eqref{eq:Gfunc-simplified},  the distribution $\phi_s$ defined on $\mbb{R}^+ $ by  
$$
\phi_s(t) = F(s,t) + \Sigma^{*} H(s-t)
$$ 
clearly verifies the delay ODE
\begin{equation}
\label{eq:phi}
\frac{\partial \phi_s}{\partial t}(t) - \phi_s(t) B^{*} - \phi_s (t - \tau)C^{*} = -\Sigma^{*} H(s-t) B^{*}  - \Sigma^{*} H(s-t+\tau) C^{*}  
\end{equation}
with initial condition    
\begin{equation*}
\phi_s(t) = \Sigma^{*} H(s-t) = \Sigma^{*} 
\end{equation*}
for all $t \in [-\tau,0]$ and $s > 0$.
Note that on $[-\tau, 0]$, this initial condition is constant and hence continuous. 
For each fixed $s$, we write the right side of equation~\eqref{eq:phi} as
\begin{equation*}
\theta_s (t) = -\Sigma^{*} H(s-t) B^{*}  - \Sigma^{*} H(s-t+\tau) C^{*}.  
\end{equation*}
Observe that for $t \geqs 0$, $\theta_{s}$ is bounded in $t$ (uniformly in $s$) and continuous in $t$ except for the two points $t=s$ and $t = s + \tau$. 
As was done above for $m(t)$, one can perform the iterative  analysis of the delay ODE~\pref{eq:phi} on successive time intervals $J_k = [ k \tau, (k+1) \tau]$. 
Since both the initial condition and the right side $\theta_s$ are known, the $k^{\text{th}}$ step of this iterative construction amounts to solving a first-order linear ODE with constant coefficients and known right side.
So this construction is essentially stepwise explicit  and proves by induction on $k$ that the distribution $\phi_s(t)$ is actually a bounded function of $t$ which is differentiable except maybe at points of the form $t= k \tau$ and $t= s+k \tau$.

For each $s \geqs 0$, once the full solution $\phi_s$ has been constructed on $[-\tau, T]$ as just outlined, we immediately obtain $F(s,t) = \phi_s(t) - \Sigma^{*} H(s-t)$.

At this stage, $F(s,t)$ is theoretically known over $t \in [-\tau, T]$ for each $s \geqs 0$ and can be plugged into the delay ODE~\eqref{eq:cov} in $s$ verified by $\rho(s,t)$ for each fixed $t$, with initial conditions $\rho(s,t) =0 $ for $(s,t) \in [-\tau,0] \times [-\tau, 0]$. 
For each fixed  $t\in [0,T]$, this delay ODE for $s \mapsto \rho(s,t)$  can again be solved iteratively on the successive time intervals $J_k$. 

The preceding analysis is easy to implement numerically to solve the three types of delay ODEs involved. Each reduction to a succession of roughly $T/\tau$ linear ODEs enables the use of classical numerical schemes to compute $m(t)$ and $\rho(s,t)$. We have used the now fairly standard numerical approach of \cite{Bellen-Zennaro-2003}. Our numerical implementation is presented in Section~\ref{s:implementation}.

We have focused on $m(t)$ and $\rho(s,t)$ because these two functions essentially determine the rate functional of large deviations theory for the Gaussian diffusion with delay $X_t^\vep$.

\subsection{General large deviations framework}
\label{ss:general-ld-framework}

We present, without proofs, a brief overview of large deviations theory for Gaussian measures and processes (refer to Chapter $6$ in~\cite{Azencott-F-V} for proofs of theorems).   We will then apply these principles to Gaussian diffusions with delay. 

The following notations and definitions will be used throughout this section.
\begin{itemize}
\item $E$ is any separable Banach space with dual space $E^{*}$ and duality pairing $\langle v,x \rangle \defas v(x)$ for  $v\in E^{*}$ and $x \in E$. 
\item $\nu$ is any probability on the Borel $\sigma$-algebra $\mathcal{B}(E)$. 
\item For $v\in E^{*}$, the image probability $v(\nu)$  is defined  on $\mbb{R}$ by   $[v(\nu)](A) := \nu(v^{-1}(A))$ for all Borel subsets $A$ of $\mbb{R}$.
\item $\nu$ is called \textit{centered} iff $v(\nu)$ is centered for all $v\in E^{*}$.
\item $\nu$ is called \textit{Gaussian} iff for all  $v \in E^{*}$, the image probability $v(\nu)$ is a Gaussian distribution on $\mbb{R}$. 
\end{itemize}
The Laplace transform, $\hat{\nu}(v)$, is defined as follows for $v \in E^{*}$: 
\begin{equation*}
\hat{\nu}(v) = \int_{E} e^{\langle v,x\rangle} \, \mrm{d}\nu(x).
\end{equation*}
A classical result asserts that when $\nu$ is a Gaussian probability measure on a separable Banach space E, then its Laplace transform $\hat{\nu}(v)$ is finite for all $v \in E^{*}.$

We first recall key large deviations results for generic Borel probabilities $\nu$ on separable Banach spaces $E$. Later on below, $E$ will be the space $C_{\mbold{0}} ([0,T])$ and $\nu$ will be Gaussian. 
Probabilities of rare events for the empirical mean of independent random vectors with identical distribution $\nu$ can be estimated via  a non-negative functional, the Cramer transform $\lambda$ of $\nu$, defined as follows (see Theorem 3.2.1 in~\cite{Azencott-F-V}). 
\begin{definition}
\label{d:Cramer}
Assume $\nu$ has finite Laplace transform $\hat{\nu}$.  The \textit{Cramer transform} $\lambda$ of $\nu$, also called the large deviations \textit{rate functional} of $\nu$, is defined for $x\in E$ by
\begin{equation*}
\lambda(x) = \sup_{v \in E^{*}} \left[ \langle v,x\rangle - \log \hat{\nu}(v) \right].
\end{equation*}
Note that $0 \leqs \lambda(x) \leqs + \infty$.
The \textit{Cramer set functional} $\Lambda(A)$ is then defined for all $A \subset E$ by
\begin{equation*}
\Lambda(A) = \inf_{x \in A} \lambda(x).
\end{equation*}
\end{definition}

When $E$ is a Banach space of continuous paths $f$, the value $\lambda (f)$ of the Cramer transform can be viewed as the ``energy'' of the path $f$. For instance, if $\nu$ is the probability distribution of the Brownian motion $W_t \in \mbb{R}^d$ in its path space, the Cramer transform is the kinetic energy  $\lambda(f) = \frac{1}{2}\vnorm{f '}_{L^2}^2$ (see Proposition 6.3.8 in~\cite{Azencott-F-V}). 
The set functional $\Lambda(A)$ quantifies the probability of rare events in path space through classical large deviations inequalities, which we now recall. 

\subsection{Gaussian framework and associated Hilbert space}\label{ss:general-Gaussian-framework}
Here, $Z$ is the random path of a centered continuous Gaussian process $Z_t$ driven by the delay SDE~\eqref{e:dSDE-linear-centered}. The probability distribution $\nu$ of $Z$ is a centered Gaussian probability on the separable Banach space $E = C_{\mbold{0}} ([0,T])$. So we now focus on Gaussian probabilities on separable Banach spaces.
We first state key large deviations inequalities due to S. Varadhan.
\begin{theorem}[see Theorem 6.1.6 in~\cite{Azencott-F-V}]\label{thrm:GeneralLargeDev}
Let $\nu$ be a centered Gaussian probability measure on a separable Banach space $E$. Let $Z $ be an E-valued  random variable with probability distribution $\nu$. Let $\Lambda$ be the Cramer set functional of $\nu$. For every Borel subset $A$ of $E $ one has 
\begin{equation}
\label{eq:varadhan}
-\Lambda(A^{\circ}) \leqs \liminf_{\varepsilon \to 0} \varepsilon^2\log \mbb{P} (\varepsilon Z\in A) \leqs \limsup_{\varepsilon \to 0} \varepsilon^2 \log \mbb{P} (\varepsilon Z\in A) \leqs -\Lambda(\bar{A}).
\end{equation}
where $A^{\circ}$ and $\bar{A}$ denote the interior and the closure of $A$, respectively.
\end{theorem}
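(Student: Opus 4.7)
The statement is a standard Varadhan-type large deviations principle for the rescaled Gaussian family $\varepsilon Z$ with speed $\varepsilon^{-2}$ and rate function $\lambda$. The plan is to split into the standard three pieces: a rescaling identity for the Cramer transform, an upper bound on closed sets via exponential Chebyshev, and a lower bound on open sets via the Cameron--Martin change-of-measure formula.

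The first step is to relate the Cramer transform of the law $\mu_\varepsilon$ of $\varepsilon Z$ to that of $\nu$. Because $\nu$ is centered Gaussian, $\hat\nu(v)=\exp(\tfrac12\sigma^2(v))$ where $\sigma^2(v)$ is the variance of the real Gaussian $\langle v,Z\rangle$; then $\hat\mu_\varepsilon(v)=\hat\nu(\varepsilon v)=\exp(\tfrac12\varepsilon^2\sigma^2(v))$. A direct sup-computation in the definition of the Cramer transform (substituting $w=\varepsilon^2 v$) gives that the Cramer transform of $\mu_\varepsilon$ equals $\lambda/\varepsilon^2$. In parallel, I would identify $\lambda$ with the square of the Cameron--Martin norm: $\lambda(x)=\tfrac12\|x\|_H^2$ for $x$ in the reproducing kernel Hilbert space $H\hookrightarrow E$ of $\nu$, and $\lambda(x)=+\infty$ otherwise. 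Fernique's integrability theorem then yields compactness of the level sets $\{\lambda\leqslant c\}$ in $E$ (exponential tightness of $\mu_\varepsilon$).

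For the upper bound, for each linear functional $v\in E^{*}$ and each real $\alpha$, the exponential Chebyshev inequality applied to $\hat\mu_\varepsilon$ gives
\begin{equation*}
\mathbb{P}\bigl(\langle v,\varepsilon Z\rangle\geqslant \alpha\bigr)\leqslant \exp\!\bigl(-\alpha+\tfrac12\varepsilon^2\sigma^2(v)\bigr),
\end{equation*}
which, optimized over $v$, controls any closed half-space by the Cramer value at its boundary point. For a general closed set $\bar A$ I would combine this with exponential tightness: pick a compact level set $K_c=\{\lambda\leqslant c\}$; the mass outside $K_c$ is negligible at exponential rate $\varepsilon^{-2}c$, and the compact piece $K_c\setminus \bar A$ can be covered by finitely many open half-spaces separating it from $\bar A$ via the Hahn--Banach theorem. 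Patching the finite half-space estimates and letting $c\uparrow\infty$ produces $\limsup_{\varepsilon\to 0}\varepsilon^2\log\mathbb{P}(\varepsilon Z\in A)\leqslant -\Lambda(\bar A)$.

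For the lower bound, fix $x^{*}\in A^{\circ}$ with $\lambda(x^{*})<\infty$, so $x^{*}\in H$, and choose $\delta>0$ with $B(x^{*},\delta)\subset A^{\circ}$. The Cameron--Martin formula writes the translated law as an absolutely continuous perturbation of $\nu$, with density $\exp\bigl(\mathcal I(h)-\tfrac12\|h\|_H^2\bigr)$ where $h=x^{*}/\varepsilon$ and $\mathcal I(h)$ is the Gaussian stochastic integral associated to $h$. Applying this to the event $\{\|\varepsilon Z-x^{*}\|_E<\delta\}$ yields
\begin{equation*}
\mathbb{P}\bigl(\varepsilon Z\in B(x^{*},\delta)\bigr)=\exp\!\bigl(-\tfrac{1}{2\varepsilon^{2}}\|x^{*}\|_H^{2}\bigr)\,\mathbb{E}\bigl[\mathbf{1}_{\{\|\varepsilon Z\|_E<\delta\}}\exp\!\bigl(-\mathcal I(x^{*}/\varepsilon)\bigr)\bigr],
\end{equation*}
and a standard Chebyshev/Jensen argument on the expectation (using $\mathbb{E}[\mathcal I(x^{*}/\varepsilon)^2]=\varepsilon^{-2}\|x^{*}\|_H^{2}$ together with $\mathbb{P}(\|\varepsilon Z\|_E<\delta)\to 1$) produces $\liminf\varepsilon^2\log\mathbb{P}(\varepsilon Z\in B(x^{*},\delta))\geqslant -\lambda(x^{*})$. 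Taking the infimum over $x^{*}\in A^{\circ}$ gives the lower bound $-\Lambda(A^{\circ})$.

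The hard part is the upper bound in the infinite-dimensional setting: the exponential Chebyshev estimate only naturally controls half-spaces (or, after Hahn--Banach, convex sets), so reducing an arbitrary Borel set $A$ to finitely many such pieces requires the exponential tightness coming from Fernique's theorem. Meshing the tightness estimate with the half-space covering uniformly in $\varepsilon$ is the main technical hurdle; the lower bound, by contrast, is essentially a one-point Cameron--Martin computation.
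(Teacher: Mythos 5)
You should note at the outset that the paper itself does not prove this theorem: it is quoted verbatim from Theorem 6.1.6 of the cited reference (the paper explicitly says the overview in Section 2.7 is given ``without proofs''), so the comparison is with the classical argument, not with an internal one. Your skeleton is exactly that classical route --- the rescaling identity giving the Cram\'er transform of the law of $\varepsilon Z$ as $\lambda/\varepsilon^{2}$, an exponential Chebyshev/half-space upper bound, and a Cameron--Martin tilting lower bound --- and your lower-bound half is essentially complete (the Jensen/symmetry argument on $\mathbb{E}[\mathbf{1}_{\{\|\varepsilon Z\|_{E}<\delta\}}e^{-\mathcal{I}(x^{*}/\varepsilon)}]$ works, given the identification $\lambda=\tfrac12\|\cdot\|_{H}^{2}$, which you may legitimately quote).

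The genuine gap is in the upper bound, precisely at the step you flag as the ``main technical hurdle'' but then dispose of in one sentence. First, Fernique's theorem gives $\mathbb{E}\,e^{\alpha\|Z\|_{E}^{2}}<\infty$ and hence Gaussian tails for $\|\varepsilon Z\|_{E}$, but in infinite dimensions closed balls of $E$ are not compact, so this does not by itself produce exponentially tight compacts; and compactness of the level sets $\{\lambda\leq c\}$ is equivalent to compactness of the embedding of the Cameron--Martin unit ball into $E$, a true but separate fact needing its own proof. Even granting compact level sets, you still need the measure estimate $\limsup_{\varepsilon\to0}\varepsilon^{2}\log\mathbb{P}(\varepsilon Z\notin K_{c})\leq -c$, which is essentially the hard half of the LDP itself; in the standard treatments it is obtained either from Borell's isoperimetric inequality or by expanding $Z$ in a series, proving the bound for finite-dimensional projections, and using Fernique on the tail remainder to make the approximation exponentially negligible --- none of which is contained in ``Fernique yields exponential tightness.'' Second, the covering step is mis-stated: one covers the compact piece $\bar{A}\cap K_{c}$ (not $K_{c}\setminus\bar{A}$) by finitely many open half-spaces, chosen for each $x\in\bar{A}\cap K_{c}$ from near-optimizing functionals $v\in E^{*}$ in the Legendre-transform definition of $\lambda(x)$; Hahn--Banach separation ``from $\bar{A}$'' is not the relevant device and would in any case require convexity that a general Borel $A$ does not have. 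With those two repairs the outline becomes the standard proof of the quoted theorem.
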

Whenever $\Lambda(A^{\circ}) = \Lambda(\bar{A})$, then the lower and upper limits in \eqref{eq:varadhan} are equal and
\begin{equation*}
\lim_{\varepsilon \to 0} \varepsilon^{2} \log \mbb{P}(\varepsilon Z \in A) =  -\Lambda(A).
\end{equation*}
In this case, for small $\varepsilon$ one has the rough estimate 
\begin{equation*}
\log \mbb{P} (\varepsilon Z\in A) \approx -\frac{\Lambda(A)}{\varepsilon^{2}}.
\end{equation*}
The equality $\Lambda(A^{\circ}) = \Lambda(\bar{A})$ holds, for example, when the Cramer transform is finite and continuous on $\bar{A}$, and $\bar{A}$ is the closure of $ A^{\circ}$.
\begin{definition}
In the Banach space context, the \emph{covariance kernel} $\Cov : E^{*} \times E^{*} \to \mbb{R}$ of $\nu$ is defined for all $v,w \in E^{*}$ by
\begin{equation} \label{COV}
\Cov (v,w) = \int_{E} \langle v, z \rangle \langle w, z \rangle \, \mrm{d} \nu (z).
\end{equation}
\end{definition}
Let $\Omega$ be the probability space $(E, \nu)$. The covariance kernel $\Cov(v,w)$ defines a linear embedding of $E^{*}$ into a Hilbert subspace $H$ of $L^{2}(\Omega)$ as follows. For each $v \in E^{*}$, define a Gaussian random variable $Y_{v}$ on $\Omega$ by $Y_v(z) = \langle v, z \rangle$ for all $z \in\Omega$. 
Let $H$ be the closure in $L^{2}(\Omega)$ of the vector space spanned by all the $Y_{v}$ with $v \in E^{*}$. The linear map $Y : E^{*} \to H$ defined by $Y(v) = Y_{v}$ is continuous with dense image in $H.$ The inner product in $H$  verifies 
\begin{equation*}
\langle Y_{v}, Y_{w} \rangle_{H} = \Cov(v,w). 
\end{equation*} 
Define a continuous linear operator $G : L^{2}(\Omega) \to E$ by 
\begin{equation*}
G(\eta) = \int_{E} z \, \eta(z)\, \mrm{d}\nu(z)
\end{equation*}
for all $\eta \in L^{2} (\Omega)$.
Then for all $v \in E^{*}$ and $\eta \in L^{2}(\Omega)$, one has
$$
\langle Y_{v}, \eta \rangle_{H} = \langle v, G(\eta) \rangle ,
$$
so that $G$ restricted to $H$ is injective.
Within this classical Gaussian framework, one obtains a generic expression for the Cramer transform $\lambda$ of $\nu$ (see Theorem 6.1.5 in~\cite{Azencott-F-V}). 
\begin{theorem}\label{thrm:GeneralBanachCramer}
Let $\nu$ be a centered Gaussian probability on the separable Banach space $E.$ Let $H$ and $G: H \to E$  be, respectively, the Hilbert space and the continuous linear injection associated to $\nu$ by the preceding construction.  Then $G$ is a compact operator, and for $z \in E$, the Cramer transform $\lambda$ of $\nu$ is given by
\begin{equation*}
\lambda(z) = 
\begin{cases}
\frac{1}{2} \vnorm{G^{-1} z}_{H}^2, &\text{if } z \in G(H);\\
\infty, &\text{otherwise}.
\end{cases}
\end{equation*}
\end{theorem}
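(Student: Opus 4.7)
The plan is to identify the Cramer transform with a Hilbert-space norm via the fundamental duality $\langle v, G(\eta)\rangle = \langle Y_v,\eta\rangle_H$ for $v \in E^*$ and $\eta \in H$. First I would compute the log Laplace transform: since $z \mapsto \langle v, z\rangle$ is centered Gaussian under $\nu$ with variance $\Cov(v,v) = \|Y_v\|_H^2$, one has $\log \hat\nu(v) = \tfrac12\|Y_v\|_H^2$, so Definition~\ref{d:Cramer} reads
\begin{equation*}
\lambda(z) = \sup_{v \in E^*}\bigl[\langle v,z\rangle - \tfrac12\|Y_v\|_H^2\bigr].
\end{equation*}
Homogeneity (replacing $v$ by $tv$ and optimizing over $t \in \mathbb{R}$) recasts this equivalently as $\lambda(z) = \sup_{Y_v \neq 0} |\langle v,z\rangle|^2/(2\|Y_v\|_H^2)$, with $\lambda(z) = +\infty$ whenever some $v$ achieves $Y_v = 0$ and $\langle v, z\rangle \neq 0$.

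Suppose first $z = G(\eta_0) \in G(H)$, with $\eta_0$ unique by injectivity of $G|_H$. The duality gives $\langle v,z\rangle = \langle Y_v,\eta_0\rangle_H$, so the defining supremum becomes $\sup_{v \in E^*}\bigl[\langle Y_v,\eta_0\rangle_H - \tfrac12\|Y_v\|_H^2\bigr]$. Density of $\{Y_v : v \in E^*\}$ in $H$ and continuity of $h \mapsto \langle h,\eta_0\rangle_H - \tfrac12\|h\|_H^2$ let one extend the supremum to all $h \in H$; it is attained at $h = \eta_0$ with value $\tfrac12\|\eta_0\|_H^2 = \tfrac12\|G^{-1}z\|_H^2$. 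Conversely, suppose $z \notin G(H)$. If $z$ lies outside the closed support $E_0$ of $\nu$, a Hahn--Banach functional $v$ that vanishes on $E_0$ satisfies $Y_v = 0$ in $H$ but $\langle v,z\rangle \neq 0$, so $\lambda(z) = \infty$ by the homogeneous reformulation. If instead $z \in E_0 \setminus G(H)$, define $L(Y_v) := \langle v,z\rangle$ on the dense subspace $\{Y_v\} \subset H$; $L$ is well-defined because $Y_{v_1}=Y_{v_2}$ in $H$ means $\langle v_1 - v_2,\cdot\rangle \equiv 0$ $\nu$-a.e., hence on $E_0 \ni z$ by continuity. If $L$ were bounded, Riesz representation would yield $\eta \in H$ with $\langle v,z\rangle = \langle Y_v,\eta\rangle_H = \langle v,G(\eta)\rangle$ for every $v \in E^*$, forcing $z = G(\eta)$, a contradiction. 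Hence $L$ is unbounded and $\lambda(z) = \infty$.

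The compactness of $G : H \to E$ is the step I expect to be the main obstacle, as this is where the Gaussian nature of $\nu$ must be exploited structurally rather than formally. I would combine Fernique's theorem, which gives exponential integrability of $\|z\|_E$ under $\nu$, with tightness of $\nu$: for each $\delta > 0$ one picks a compact $K_\delta \subset E$ with $\nu(E\setminus K_\delta) < \delta$ and shows that $G$ maps the unit ball of $H$ into a small $E$-neighborhood of the absolutely convex hull of $K_\delta$. The argument splits $G(\eta) = \int z\,\eta(z)\,\mathrm{d}\nu(z)$ into a contribution from $K_\delta$ (compact by construction) and a remainder controlled via Cauchy--Schwarz between $H$ and $L^2(\nu)$ together with Fernique integrability; this yields relative compactness of $G$ on bounded subsets of $H$.
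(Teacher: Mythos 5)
The paper itself does not prove this statement: it is quoted verbatim from Theorem 6.1.5 of the cited lecture notes \cite{Azencott-F-V}, and Section~\ref{ss:general-ld-framework} explicitly presents this material ``without proofs.'' So there is no in-paper argument to compare against; judged on its own, your proof is essentially correct and follows the standard route for this classical result. The computation $\log\hat\nu(v)=\tfrac12\|Y_v\|_H^2$, the homogeneous reformulation of the Legendre transform, the evaluation on $G(H)$ via duality and density of $\{Y_v\}$ in $H$, and the Riesz-representation argument showing $L$ must be unbounded when $z\notin G(H)$ are all sound. Two points deserve attention. First, your Hahn--Banach step for $z$ outside the support implicitly uses that the support of a centered Gaussian measure is a closed \emph{linear subspace} of $E$ (otherwise a functional vanishing on the support and nonzero at $z$ need not exist); this is true but is itself a nontrivial fact, and you could avoid invoking it by splitting cases differently: either some $v$ has $Y_v=0$ in $H$ but $\langle v,z\rangle\neq 0$ (then $\lambda(z)=\infty$ immediately), or no such $v$ exists, in which case $L(Y_v)=\langle v,z\rangle$ is automatically well defined and your unboundedness argument applies. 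Second, the compactness of $G$ is only sketched, but the outline is the right one: by Fernique $\|z\|_E$ has moments of all orders, and writing $G(\eta)=\int_{K_\delta} z\,\eta\,\mathrm{d}\nu+\int_{E\setminus K_\delta} z\,\eta\,\mathrm{d}\nu$, the first term lies in the (compact) closed absolutely convex hull of $K_\delta$ since $\int_{K_\delta}|\eta|\,\mathrm{d}\nu\leqs\|\eta\|_{L^2}\leqs 1$, while the second has norm at most $\bigl(\int\|z\|_E^4\,\mathrm{d}\nu\bigr)^{1/4}$ times a power of $\nu(E\setminus K_\delta)$ by Cauchy--Schwarz, so $G(B_H)$ is totally bounded. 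Filling in those two details would make the argument complete and self-contained, which is more than the paper attempts.
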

\subsection{Large deviations rate functional for continuous Gaussian processes}
\label{ss:Gaussian-Process}
For a centered continuous Gaussian process on $[0, T]$ with probability distribution $\nu$ in the path space $C_{\mbold{0}} ([0,T])$, the generic framework in Section~\ref{ss:general-Gaussian-framework} can be applied to $E = C_{\mbold{0}} ([0,T])$ with $H =  L^{2} ([0,T])$ and $E^*$ as the space of bounded Radon measures on $[0,T]$ to obtain a more explicit form of the operator $G$ in Theorem~\ref{thrm:GeneralBanachCramer}.  For a detailed explanation on the connection among the spaces $E, E^*,$ and $H$ in this context, see Section 6.3 in~\cite{Azencott-F-V}.  The rate functional of $\nu$, when viewed either on $H= L^{2}([0,T])$ or $E =  C_{\mbold{0}} ([0,T]),$ can be expressed via Proposition 6.3.7 and Lemma 6.3.6 in~\cite{Azencott-F-V}, which we reformulate as follows. 
\begin{proposition}
\label{p:General-ld-result}
Let $Z_{t} \in \mbb{R}^d $ be any centered continuous Gaussian process on $[0,T]$ with continuous matrix-valued covariance function $\rho(s,t).$ The random path $Z : t \to Z_t$ takes values in the Banach space $E = C_{\mbold{0}} ([0,T])$, and hence in the Hilbert space $H = L^{2} ([0,T])$. Since the inclusion $j : E \to H$ is continuous and injective, the probability distribution $\nu$ of $Z$ can be viewed either as a centered Gaussian measure on $E$ or $H$.
Recall that the self-adjoint covariance operator $R : H \to H$ of $Z_{t}$ is defined by 
\begin{equation} 
\label{covarop}
R f(t) = \int_{0}^{T} \rho (t,u) f(u) \, \mrm{d} u 
\end{equation}
for all $f \in H$.
Then $R$ is a compact linear operator with finite trace, and $R(H) \subset E$. Moreover, $R$ is semi-positive definite.
Let $V \subset H $ be the orthogonal complement in $H$ of the kernel of $R$. Call $S$ the restriction of $\sqrt{R}$ to $V$. Then $S : V \to H$ is injective and maps $V$ onto $\sqrt{R} (H)$.
Call $\lambda_H : H  \to [0, + \infty]$ the Cramer transform of $\nu$ on the Hilbert space $H$. Then for any $f \in H$, one has 
\begin{equation} \label{lambdaH}
\lambda_H (f) =
\begin{cases}
\frac{1}{2} \vnorm{S^{-1} f}^{2}_H, &\text{if } f \in \sqrt{R} (H);
\\
\infty , &\text{otherwise}.
\end{cases}
\end{equation}
When viewed as a probability on the Banach space $E = C_{\mbold{0}} ([0,T])$, the probability $\nu$ has a Cramer transform $\lambda_E$, and one has
\begin{equation} \label{lambdaE}
\lambda_E (f) = \lambda_H (f)
\end{equation}
for all $f \in E = C_{\mbold{0}} ([0,T])$.
\end{proposition}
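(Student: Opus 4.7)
The plan is to verify, in sequence, the structural properties of $R$, the identification of $S$ as an injection onto $\sqrt{R}(H)$, the formula~\eqref{lambdaH} for $\lambda_{H}$, and finally the equality $\lambda_{E} = \lambda_{H}$ on $E$. Throughout, the argument will leverage Theorem~\ref{thrm:GeneralBanachCramer} together with the specializations Proposition~6.3.7 and Lemma~6.3.6 of~\cite{Azencott-F-V}.

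For the structural properties of $R$: since $\rho$ is continuous on the compact square $[0,T]^{2}$, the kernel is square-integrable, so $R$ is Hilbert-Schmidt on $H$, hence compact. Semi-positive definiteness follows from
\begin{equation*}
\langle R f, f\rangle_{H} = \int_{0}^{T} \int_{0}^{T} \langle \rho(t,u) f(u), f(t) \rangle_{\mbb{R}^{d}} \, \mrm{d}u \, \mrm{d}t = \mbb{E}\left[ \left( \int_{0}^{T} \langle Z_{t}, f(t)\rangle_{\mbb{R}^{d}} \, \mrm{d}t \right)^{2} \right] \geqs 0,
\end{equation*}
and finite trace follows from $\mathrm{tr}(R) = \int_{0}^{T} \mathrm{tr}(\rho(t,t)) \, \mrm{d}t < \infty$. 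The inclusion $R(H) \subset E$ holds because continuity of $\rho$ together with dominated convergence makes $R f$ continuous on $[0,T]$, while $Z_{0} = \mbold{0}$ almost surely (since $Z$ lives in $E = C_{\mbold{0}}([0,T])$) forces $\rho(0, \cdot) \equiv 0$ and hence $R f (0) = \mbold{0}$. Invoking the spectral theorem for compact self-adjoint operators to define $\sqrt{R}$, one obtains $\ker \sqrt{R} = \ker R$, so $V = (\ker R)^{\perp}$ is precisely the subspace on which $\sqrt{R}$ is injective, and $S = \sqrt{R}|_{V}$ is a bijection from $V$ onto $\sqrt{R}(H)$.

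To derive the formula for $\lambda_{H}$, apply Theorem~\ref{thrm:GeneralBanachCramer} to $\nu$ viewed as a centered Gaussian probability on the separable Hilbert space $H$. That theorem furnishes an auxiliary Hilbert space $\tilde{H} \subset L^{2}(\Omega)$ and a compact injection $\tilde{G} : \tilde{H} \to H$ with $\lambda_{H}(f) = \tfrac{1}{2} \vnorm{\tilde{G}^{-1} f}_{\tilde{H}}^{2}$ on $\tilde{G}(\tilde{H})$. The key computation, essentially Proposition~6.3.7 of~\cite{Azencott-F-V}, identifies the self-adjoint product $\tilde{G} \tilde{G}^{*}$ with the integral operator $R$ on $H$; this follows by testing against $v, w \in H^{*}$ and matching the Gaussian covariance kernel~\eqref{COV} against~\eqref{covarop}. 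Consequently $\tilde{G}(\tilde{H}) = \sqrt{R}(H) = S(V)$, and by the polar-decomposition identity $\vnorm{\tilde{G}^{-1} f}_{\tilde{H}} = \vnorm{S^{-1} f}_{H}$ the rate functional reduces to the stated form~\eqref{lambdaH}.

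The main obstacle is the identity $\lambda_{E}(f) = \lambda_{H}(f)$ for $f \in E$. The inclusion $j : E \to H$ is continuous and injective but far from surjective, so the Cramer transforms of $\nu$ computed by supremum over $E^{*}$ and $H^{*}$ could a priori differ. The dual map $j^{*} : H^{*} \to E^{*}$ identifies $v \in L^{2}([0,T])$ (via Riesz representation) with the absolutely continuous Radon measure $v(t) \, \mrm{d}t$. The crux is that $j^{*}(H^{*})$ is dense in $E^{*}$ with respect to the two seminorms $\mu \mapsto |\mu(f)|$ and $\mu \mapsto \mathrm{Var}(\mu(Z))$ that control the variational formula defining $\lambda_{E}(f)$. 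Once this density is established, continuity of the Laplace transform $\hat{\nu}$ allows the supremum over $\mu \in E^{*}$ to be realized along $j^{*}(H^{*})$, yielding $\lambda_{E}(f) = \lambda_{H}(f)$. This is Lemma~6.3.6 of~\cite{Azencott-F-V}, whose verification in the continuous Gaussian setting ultimately rests on Fernique's theorem applied to $\sup_{t \in [0,T]} \vnorm{Z_{t}}_{\mbb{R}^{d}}$ to control the Gaussian exponential moments uniformly in the approximating functionals.
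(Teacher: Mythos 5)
The paper offers no proof of this proposition: it is explicitly presented ``without proofs'' as a reformulation of Proposition~6.3.7 and Lemma~6.3.6 of~\cite{Azencott-F-V}, so there is no internal argument to compare against. Your outline is correct and consistent with that source: the routine parts (Hilbert--Schmidt compactness and positivity of $R$, the Mercer-type trace identity, $R(H)\subset E$ via $\rho(0,\cdot)=0$, the spectral-theorem description of $S$, and the identification $GG^{*}=R$ followed by the polar-decomposition identity that turns Theorem~\ref{thrm:GeneralBanachCramer} into~\eqref{lambdaH}) are filled in correctly, while the genuinely substantive step, the equality $\lambda_E=\lambda_H$ on $E$, is deferred to the same Lemma~6.3.6 the paper cites rather than proved. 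Two small remarks: the inequality $\lambda_H(f)\leqs\lambda_E(f)$ for $f\in E$ is immediate since $j^{*}(H^{*})\subset E^{*}$ with matching log-Laplace transforms, so only the reverse inequality requires the density/mollification argument you sketch; and Fernique's theorem is not really what drives that argument here, since for a centered Gaussian $\nu$ one has $\log\hat{\nu}(\mu)=\tfrac{1}{2}\Cov(\mu,\mu)$ exactly, so the point is the convergence $\Cov(v_n,v_n)\to\Cov(\mu,\mu)$ for mollified densities $v_n$ of a Radon measure $\mu$, which follows from uniform continuity of $\rho$ and of $f$ on $[0,T]$.
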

\subsection{Large deviations rate functional for Gaussian diffusions with delay}\label{ss:path-theory}
In this section we compute explicitly the large deviations rate functional for the centered Gaussian process $Z_t \in \mbb{R}^d$ driven by the delay SDE~\eqref{e:dSDE-linear-centered}. We will adapt to this delay SDE a technique introduced by R. Azencott in \cite{Azencott} and \cite{Azencott-F-V} to study large deviations for hypoelliptic diffusions with unbounded smooth coefficients.
\begin{proposition}   
\label{supportZ}
Consider the centered Gaussian process $Z_t \in \mbb{R}^d $ driven by the delay SDE~\eqref{e:dSDE-linear-centered}, and call $Z = Z([0, T]) \in E = C_{\mbold{0}} ([0,T]) $ the random path $t \to Z_t$. Let $\nu$ be the probability distribution of $Z$ on the Banach space $E$.
Let $W = W([0, T]) \in E$ be the continuous random path of the Brownian motion $W_t \in \mbb{R}^d$ driving the delay SDE verified by $Z_t$. 
Let $\Sigma$ be the $d \times d$ matrix of diffusion coefficients in the delay SDE verified by $Z_t$. We now assume that $\Sigma$ has \emph{full rank} $d$.

Then, there is a bijective linear map $\Gamma$ from $E$ onto $E$ such that $\Gamma$ and $\Gamma^{-1}$ are both continuous and the random paths $Z$ and $W$ verify almost surely $Z= \Gamma (W)$. Moreover, $\Gamma$ is defined by iterating operators explicitly given in equation \eqref{map3} below. 
Let $\mathcal{H} \subset E = C_{\mbold{0}} ([0,T])$ be the dense subspace of all paths $g$ in $E$ such that $g’$ is in $H = L^2([0, T])$. The restriction of $\Gamma$ to $\mathcal{H}$ is a linear bijection onto $\mathcal{H}$. For $f \in \mathcal{H}$ the function $g = \Gamma^{-1}f $ is given by equation \eqref{inversegamma} below, and is in $\mathcal{H}$.

In the Banach space $E$, the support of the probability distribution $\nu$ of $Z$ is equal to $E$, and the only closed vector subspace $F$ of $E$ such that $\nu(F) =1$ is $F=E$. Similarly, in the Hilbert space $H$, the support of $\nu$ is equal to $H$, and the only closed subspace $F$ of $H$ such that $\nu(F) =1$ is $F=H$.
\end{proposition}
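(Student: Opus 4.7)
The plan is to exhibit $\Gamma$ as a continuous linear bijection of $E$ built by the same method-of-steps iteration already used for $m$, $F$, and $\rho$, with $\Gamma(W) = Z$ a.s.\ by uniqueness of strong solutions (Proposition~\ref{p:Gaussian-sol}). Because $\Sigma$ is square and invertible, the inverse will follow at once by algebraic rearrangement of the integrated form of~\eqref{e:dSDE-linear-centered}. Once such a $\Gamma$ is in hand, the support assertions reduce to the classical fact that Wiener measure has full topological support on $C_{\mbold{0}}([0,T])$.

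Inverse (easy direction): integrating \eqref{e:dSDE-linear-centered} and using invertibility of $\Sigma$ gives, for any $f \in E$,
\begin{equation}
\label{inversegamma}
(\Gamma^{-1} f)(t) = \Sigma^{-1} \Bigl[ f(t) - \int_{0}^{t} \bigl( B f(u) + C f(u - \tau) \bigr) \mrm{d} u \Bigr],
\end{equation}
with the convention $f(u - \tau) = \mbold{0}$ for $u \in [0, \tau)$. This is plainly continuous linear $E \to E$; if $f \in \mathcal{H}$, then $(\Gamma^{-1} f)'(t) = \Sigma^{-1}(f'(t) - B f(t) - C f(t-\tau))$ lies in $L^2$, so $\Gamma^{-1}(\mathcal{H}) \subset \mathcal{H}$.

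Forward direction: I proceed iteratively on the intervals $J_k$ of Notation~\ref{Jk}. On $J_0$, $Z_{u-\tau} = \mbold{0}$, so variation of constants followed by deterministic integration by parts (valid because $u \mapsto e^{(t-u) B}\Sigma$ is $C^{1}$) rewrites the It\^o integral as a pathwise Lebesgue integral,
\[
Z_t = \Sigma W_t + \int_{0}^{t} B e^{(t-u) B} \Sigma W_u \, \mrm{d} u.
\]
Inductively, assuming $Z$ has been expressed as a continuous linear functional of $W$ on $[0, k\tau]$, the same procedure on $J_k$ yields
\begin{equation}
\label{map3}
Z_t = e^{(t - k\tau) B} Z_{k\tau} + \int_{k\tau}^{t} e^{(t-u) B} C Z_{u - \tau} \, \mrm{d} u + \Sigma (W_t - W_{k\tau}) + \int_{k\tau}^{t} B e^{(t-u) B} \Sigma (W_u - W_{k\tau}) \, \mrm{d} u.
\end{equation}
Assembling these pieces defines $\Gamma : E \to E$, continuous linear by inspection. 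Uniqueness of the strong solution forces $Z = \Gamma(W)$ a.s.; comparing \eqref{map3} with \eqref{inversegamma} (each being a rearrangement of the integrated SDE) verifies $\Gamma \circ \Gamma^{-1} = \Gamma^{-1} \circ \Gamma = \mathrm{Id}_E$, and \eqref{map3} also shows $\Gamma(\mathcal{H}) \subset \mathcal{H}$.

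For the support statements, the pushforward $\nu$ of Wiener measure $\mu_W$ under the homeomorphism $\Gamma$ has support $\Gamma(\mathrm{supp}(\mu_W)) = \Gamma(E) = E$, because $\mu_W$ classically has full support on $C_{\mbold{0}}([0,T])$. Any closed $F \subset E$ with $\nu(F) = 1$ must then contain $\mathrm{supp}(\nu) = E$, giving $F = E$. On $H$, if a closed subspace $F \subset H$ satisfies $\nu(F) = 1$, then $F \cap E$ is closed in $E$ (by continuity of $j : E \to H$) and has full $\nu$-measure, hence contains $E$; density of $E$ in $H$ forces $F = H$. The main technical subtlety is ensuring the integration-by-parts step is done pathwise, so that $\Gamma$ is a bona fide operator on every $W \in E$ and not merely $\mu_W$-almost surely; this is routine because each integrand $e^{(t-u) B}\Sigma$ is smooth and deterministic.
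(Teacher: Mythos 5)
Your proposal is correct, and it departs from the paper's proof at the one genuinely laborious step: the invertibility of $\Gamma$ on $E$. The paper builds $\Gamma^{-1}$ constructively, solving \eqref{map3} for $g$ interval by interval (introducing $K(t)=\int_{t_n}^t e^{-uB}\Sigma g(u)\,\mathrm{d}u$, solving a linear ODE for $K$, and propagating bounds $\|g\|_E\leqslant(1+\alpha)^N\|f\|_E$), continuity of $\Gamma^{-1}$ being the payoff of those estimates. You instead read off a closed-form inverse from the integrated form of \eqref{e:dSDE-linear-centered}, namely $f\mapsto\Sigma^{-1}\bigl[f(\cdot)-\int_0^{\cdot}(Bf(u)+Cf(u-\tau))\,\mathrm{d}u\bigr]$, which is manifestly linear and bounded on all of $E$; this buys brevity and dispenses with the paper's norm bookkeeping, while the paper's route yields an explicit quantitative bound on $\|\Gamma^{-1}\|$. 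Your support arguments (homeomorphic image of the full support of Wiener measure in $E$; pulling a closed full-$\nu$-measure set in $H$ back to $E$ and using density of $E$ in $H$) are equivalent to the paper's, which goes through density of $\mathcal{H}$ instead, and your two inclusions $\Gamma(\mathcal{H})\subset\mathcal{H}$, $\Gamma^{-1}(\mathcal{H})\subset\mathcal{H}$ do give the bijection of $\mathcal{H}$ onto itself. The one step you should write out rather than assert is ``comparing \eqref{map3} with \eqref{inversegamma} \dots verifies $\Gamma\circ\Gamma^{-1}=\Gamma^{-1}\circ\Gamma=\mathrm{Id}_E$'': since $W\notin\mathcal{H}$ almost surely, you need the equivalence, for merely continuous $f,g$ vanishing at $0$, of the integral relation $f(t)=\int_0^t(Bf(u)+Cf(u-\tau))\,\mathrm{d}u+\Sigma g(t)$ with the iterated relation \eqref{map3}; it is true and short --- under either relation $f-\Sigma g$ is $C^1$, and differentiating $e^{-tB}(f(t)-\Sigma g(t))$ turns one relation into the other --- but it is precisely the point where differentiability of the paths cannot be assumed, so the phrase ``each being a rearrangement of the integrated SDE'' should be replaced by that two-line computation. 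With that inserted (and noting that your closed-subspace argument in $H$ applies verbatim to arbitrary closed sets, giving the support claim in $H$), your argument establishes the full proposition; your pathwise variation-of-constants with the deterministic $C^1$ kernel is the same device the paper invokes via It\^{o} integration by parts to get $Z=\Gamma(W)$ almost surely.
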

\begin{proof}
The centered process $Z_t$ is driven by the delay SDE
$$
\mrm{d}Z_t - B Z_t \, \mrm{d}t = C Z(t - \tau) \, \mrm{d}t + \Sigma \, \mrm{d} W_t .
$$
For $n = -1, 0, 1, \ldots, N$ and $N= \lfloor \frac {T}{\tau}\rfloor $, define as above the interval $J_n= [t_n, t_{n+1}] $ with $t_n= n \tau$ and $t_{N+1}= T.$
Extend any function in $E$ by giving it the value $0$ on $J_{-1} = [- \tau, 0]$. For any $g$ in $\mathcal{H} \subset E$, we now prove that there is  a unique $f \in \mathcal{H}$ , denoted $f = \Gamma g$, verifying $f=0$ on $J_{-1}$ and the following delay ODE:
\begin{equation} \label{map1}
f’(t) - B f(t) = C f(t - \tau) + \Sigma g’(t).
\end{equation}
To construct $f$ given $g$, we set $F(t) = e^{-t B} f(t)$. The linear delay ODE \eqref{map1} can be (uniquely) solved in $f$ successively on the intervals $J_n$ as seen earlier, and since $g’ \in H,$ the same recurrence on $n$ easily shows that $f’$ is in $H$. 
We now note that equation \eqref{map1} has the following equivalent integral form, valid for all $s,t \in [0,T]$ with $s \leqs t:$
\begin{equation} \label{integralmap}
e^{-t B} f(t) - e^{-s B} f(s) = \int_s^t \; e^{-u B} [ C f(u-\tau) + \Sigma g’(u) ]\, \mrm{d}u .
\end{equation}
After setting $g = f = 0$ on $J_{-1}$, equation \eqref{map1} can hence be successively solved on the intervals $J_{n}$ with $n = 1, \ldots, N$ by applying, for $t \in J_{n},$ the recurrence formula
\begin{equation} \label{map2}
e^{-t B} f(t) - e^{-t_n B} f(t_n) = 
\int_{t_n}^t \, e^{-u B} \, [ C f(u - \tau) + \Sigma g’(u) ]\, \mrm{d}u.
\end{equation}
Integration by parts of $ e^{-u B} \Sigma g’(u) \,\mrm{d}u $ in \eqref{map2} yields, for $n \geqs 1$ and  $t \in J_{n}$,
\begin{equation} 
\label{map3}
\begin{split}
e^{-t B} f(t) - e^{-t_n B} f(t_n) &= \int_{t_n}^t \, e^{-u B} \, C f(u - \tau)\,\mrm{d}u + e^{-t B} \Sigma g(t) -  e^{-t_n B} \Sigma g(t_n)
\\
&\quad {}+ \int_{t_n}^t \, B e^{-u B} \, \Sigma g(u)\,  \mrm{d}u.
\end{split}
\end{equation}
The new iterative formula \eqref{map3} does not involve $g’$ anymore and hence remains well-defined for all functions $g \in E = C_{\mbold{0}} ([0,T]).$ 
For each $g \in E$, denote $f = \Gamma g$ the continuous function $f$ determined iteratively on the intervals $J_{n}$ by the integral equations \eqref{map3}, initialized with $f = 0$ on $J_{-1}$. These equations show by recurrence on $n$ that $\Gamma $ is a continuous linear mapping of the Banach space $E$ into $E$. 
We now show that given any $f \in E$, one can construct a unique $g \in E$ such that $\Gamma g = f$. Fix $n \geqs 0$. Assume that for all $t \in [0, t_n]$, the value $g(t)$ is already known and verifies, for some constant $c_n $ independent of the function $f$,
$$
\vnorm{g(t)}_{\mbb{R}^{d}} \leqs c_n \vnorm{f}_E 
$$
for $t \in [0, t_n]$.
We then want to solve in $g$ the equation \eqref{map3} for $t \in J_n= [t_n, t_{n+1}]$. Define 
$$ 
k(t)= e^{-t B} \Sigma g(t) \;\; \text{and } \; K(t) = \int_{t_n}^t \; k(u) \, \mrm{d}u
$$
so that $g(t) = \Sigma^{-1} e^{t B} K’(t)$. 
Note that $k(t_n) = e^{-t_n B} \Sigma g(t_n)$ is known. For $t \in J_n$ define 
$$
F_n(t) = k(t_n) + e^{-t B} f(t) - e^{-t_n B} f(t_n) - \int_{t_n}^t \, e^{-u B} \, C f(u - \tau)\,\mrm{d}u.
$$
Let $\vnorm{\cdot}$ denote matrix norm. For $t \in J_n$, the expression of $F_n$ yields  
$$
\vnorm{F_n(t)}_{\mbb{R}^{d}}\leqs c_n \vnorm{f}_{E}\, e^{T \vnorm{B}}(\vnorm{\Sigma} +2 + T).
$$
Note that for $t \in J_0$, one has $t_0 = 0$ and $k(t_0) = f(t_0) = 0$ so that the preceding bound is valid with $c_0=1$.
Then for $t \in J_n,$ equation~\eqref{map3} becomes 
$$
K’(t) + B K(t) = F_n(t), 
$$
with $K(t_n) = 0$.
This linear ODE in $K$ has, for $t \in J_n$, a unique solution given by 
$$
K(t) = e^{-t B} \int_{t_n}^t \; e^{u B} F_n(u) \, \mrm{d}u.
$$
The bound on $\vnorm{F_n(u)}_{\mbb{R}^{d}}$ for $u \in J_n$ then implies 
$$
\vnorm{K(t)}_{\mbb{R}^{d}} \leqs c_n \vnorm{f}_{E} \cdot T \cdot e^{3 T \vnorm{B}}(\vnorm{\Sigma} +2 + T) 
$$
for $t \in J_n$. 
For $t \in J_n$, the function $g$ is now uniquely determined by 
$$
g(t) = \Sigma^{-1} e^{t B} K’(t) = \Sigma^{-1} e^{t B} [ - B K(t) + F_n(t) ].
$$
This implies $\vnorm{g(t)}_{\mbb{R}^{d}} \leqs \alpha  c_n \vnorm{f}_{E}$ for all $t \in J_n$, where the constant $\alpha$ is given by
$$
\alpha = \big\| \Sigma^{-1} \big\| (\vnorm{\Sigma} +2 + T) e^{4 T \vnorm{B}} (1+\vnorm{B} T), 
$$
as is easily deduced from the bounds on $\vnorm{K(t)}_{\mbb{R}^{d}}$ and $\vnorm{F_n(t)}_{\mbb{R}^{d}}$.
Hence, $g(t)$ is now known for all $t \in [0, t_{n+1}],$ and on this interval, one has $\vnorm{g(t)}_{\mbb{R}^{d}}\leqs c_{n+1} \norm{f}_E$, with $ c_{n+1} = (1 + \alpha) c_n$.
We know that $g(0) =0$, and that $c_0=1$, so we can start this construction of $g$ with $n= 0$ and $t \in J_0$, which will yield $g(t_1)$. 
We then proceed by recurrence on $n= 0, 1, \ldots, N$ as just indicated to uniquely determine $g$ on $[0, T]$ such that $\Gamma g = f$. We have also proved that the bound $\norm{g}_E \leqs c_N \norm{f}_E$ holds for all $f \in E$ for the fixed constant $c_N= (1+ \alpha)^N$.
This proves that the continuous linear map $\Gamma : E \to E$ has a continuous inverse $\Gamma^{-1} : E \to E$, which of course must be linear.
 
By construction, for any $g \in\mathcal{H}$ , the function $f = \Gamma g$ is also in $\mathcal{H}$  and verifies the delay ODE \eqref{map1}. Conversely, given any $f \in \mathcal{H}$, we know that there is a unique $g = \Gamma^{-1} f \in E$ such that $f = \Gamma g$. We now prove that $g$ must belong to $\mathcal{H}$. Indeed, we can define $g = \Gamma^{-1} f $ explicitly by $g(0) = 0$ and by
\begin{equation} \label{inversegamma}
\frac {d}{dt} \Gamma^{-1} f (t) = g’(t) = \Sigma^{-1} [ f’(t) - B f(t) - C f(t - \tau) ] 
\end{equation}
for all $t \in [0, T]$.
This relation clearly implies $g \in \mathcal{H}$ and $f = \Gamma g$. Hence, the restriction of  $\Gamma$ to $\mathcal{H}$ defines a linear bijection of $\mathcal{H}$ onto $\mathcal{H}$.

Let now $W \in E$ be the random path of the Brownian motion $W_{t}$ on $[0,T]$. Then $V= \Gamma W \in E$ is a continuous random path on $[0, T]$, starting at $V(0) = 0$. By equation \eqref{map3}, the path $V_t$ verifies, for $t \in J_{n}$, 
\begin{equation} \label{map4}
e^{-t B} V_t - e^{-t_n B} V_{t_n} = 
\int_{t_n}^t \, e^{-u B} \, C V_{u - \tau} \, \mrm{d}u + A_n(t),
\end{equation}
where we have set 
\begin{equation} \label{map5}
A_n(t) = e^{-t B} \Sigma W_t - e^{-t_n B} \Sigma W_{t_n} + \int_{t_n}^t \, B e^{-u B} \Sigma\,  W_u \, \mrm{d} u.
\end{equation}
For $t \in J_{n}$, Ito calculus enables the integration by parts of the last integral in \eqref{map5} to express $A_n(t)$ as a stochastic integral,
$$
A_n(t) = \int_{t_n}^t \, e^{-u B}\Sigma  \,   \mrm{d} W_u .
$$
Hence for $t \in J_{n}$, equation \eqref{map4} becomes
$$
e^{-t B} V_t - e^{-t_n B} V_{t_n} = \int_{t_n}^t \, e^{-u B} \, C V_{u - \tau} \,\mrm{d}u + \int_{t_n}^t \, e^{-u B} \Sigma \, \mrm{d} W_u.
$$
Differentiating with respect to $t \in J_{n}$ then yields
$$
\mrm{d} V_t - B V_t \, \mrm{d}t = C V_{t - \tau} \, \mrm{d} t + \Sigma \, \mrm{d} W_t.
$$
This delay SDE has a unique strong solution, which is the centered Gaussian process $Z_{t}$, so that $Z_{t} = V_t$ for all $t$. Thus, the random path $Z= Z([0, T])$ verifies $Z= \Gamma W$. Hence, the probability distributions $\mu$ and $\nu$ of the random paths $W$ and $Z$ on $E$ are related by $\Gamma \mu = \nu$. 

Since $\Gamma : E \to E$ is continuous, the support $S_{\nu}$ of $\nu = \Gamma \mu $ must then be the closure of $\Gamma ( S_{\mu} )$, where $S_{\mu}$ is the support of $\mu$. But for the Brownian path distribution $\mu$, one classically has $S_{\mu} = E$. Thus $\Gamma ( S_{\mu} ) = \Gamma (E)$ contains $\Gamma (\mcal{H}) = \mcal{H}$, and hence its closure $S_{\nu}$ is equal to $E$ since $\mcal{H}$ is dense in $E$. 
If $F$ is any closed vector subspace of $E$ such that $\nu(F)=1$, then $F$ must obviously include the support $S_{\nu} = E$ of $\nu$, and hence $F = E$.

The natural continuous injection $j$ of $E$ into $H = L^2([0, T])$ maps $\nu$ onto a Borel probability $\theta = j(\nu)$. We then have  $\theta = \bar{\Gamma} \mu$, where $\bar{\Gamma} = j \circ \Gamma$ is a continuous linear map from  $E$ into $H$. In the space $H$, the support $S_{\theta}$ of $\theta$ must then be the closure in $H$ of $ K =\bar{\Gamma} (S_{\mu})  $. Hence $K = j (\Gamma (E))$ contains $j(\mcal{H}) = \mcal{H} $. Since $ \mcal{H} $ is dense in $H$, we thus have $S_{\theta} = \oline{K} = H$. This proves the last assertion in Proposition \ref{supportZ}, and concludes the proof.
\end{proof}
\begin{corollary} \label{genericSigma}
The notations are the same as in Proposition \ref{supportZ}. Call $\Sigma$ the matrix of diffusion coefficients for the delay SDE driving the centered Gaussian process $Z_t$. Consider the general case where $\Sigma$ can have any rank $\leqs d$. 
Let $V \subset \mbb{R}^d$ be the vector space generated by the columns of $\Sigma$. Let $\mathcal{V} \subset E= C_{\mbold{0}} ([0, T])$ be the vector space of all paths $f \in E$ which have derivative $f’ \in H = L^2([0, T])$, and verify $f'(t) \in V$ for almost all $t$. 
Call $\nu$ the probability distribution of the random path $Z$ on $E$ as well as on $H$.  

The support of $\nu$ in $E$ is then the closure of $\mathcal{V}$ in $E$. Similarly, the support of $\nu$ in $H$ is equal to the closure of $\mathcal{V}$ in $H$.
\end{corollary}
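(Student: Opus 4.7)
The plan is to adapt the strategy of Proposition \ref{supportZ} to the rank-deficient case. The iterative construction of $\Gamma$ via equation \eqref{map3} never uses invertibility of $\Sigma$, so it still defines a continuous linear map from the Banach space $C_{\mbold{0}}([0,T], \mbb{R}^n)$ of continuous paths of the driving $n$-dimensional Brownian motion into $E = C_{\mbold{0}}([0,T], \mbb{R}^d)$. The boundedness estimates on each interval $J_n$ in the proof of Proposition \ref{supportZ} are unaffected by the rank of $\Sigma$, and the It\^o integration-by-parts calculation that produced the identity $Z = \Gamma W$ is likewise rank-insensitive. Hence $Z = \Gamma W$ holds almost surely, and $\nu = \Gamma \mu$, where $\mu$ is the $n$-dimensional Wiener measure on $C_{\mbold{0}}([0,T], \mbb{R}^n)$.

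Since $\text{supp}(\mu) = C_{\mbold{0}}([0,T], \mbb{R}^n)$ and $\Gamma$ is continuous, one has $\text{supp}(\nu) = \overline{\Gamma(C_{\mbold{0}}([0,T], \mbb{R}^n))}$ in $E$, so the task reduces to identifying this closure with $\overline{\mathcal{V}}^E$. I would work at the Cameron-Martin level and show that $\Gamma$ maps the dense subspace $\mathcal{H}_n \subset C_{\mbold{0}}([0,T], \mbb{R}^n)$ of paths with $L^2$ derivative bijectively onto $\mathcal{V}$. For the forward direction, if $g \in \mathcal{H}_n$ and $f = \Gamma g$, then equation \eqref{map1} yields $\Sigma g'(t) = f'(t) - Bf(t) - Cf(t-\tau)$ a.e., which exhibits the noise-driven piece of $f'$ as a vector in $V$ a.e. and places $f$ in $\mathcal{V}$. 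For the inverse direction, given $f \in \mathcal{V}$, I would pick a measurable right-inverse (pseudo-inverse) $\Sigma^{+}$ of $\Sigma$ restricted to $V$, set $g'(t) = \Sigma^{+}(f'(t) - Bf(t) - Cf(t-\tau))$, and run the same stepwise-on-$J_n$ induction from Proposition \ref{supportZ} to produce a unique $g \in \mathcal{H}_n$ with $\Gamma g = f$ and $\vnorm{g}_E \leqs c_N \vnorm{f}_E$.

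Density of $\mathcal{H}_n$ in $C_{\mbold{0}}([0,T], \mbb{R}^n)$ in the sup norm, combined with continuity of $\Gamma$, then gives $\overline{\Gamma(\mathcal{H}_n)}^{E} = \overline{\Gamma(C_{\mbold{0}}([0,T], \mbb{R}^n))}^{E} = \text{supp}(\nu)$, which equals $\overline{\mathcal{V}}^E$ by the bijective identification above. The Hilbert-space statement follows by composing with the continuous injection $j : E \to H$ exactly as in the final paragraph of the proof of Proposition \ref{supportZ}, noting that $\mathcal{V}$ is dense in the $H$-closure of $\Gamma(C_{\mbold{0}}([0,T], \mbb{R}^n))$ as well.

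The main obstacle I expect is the pseudo-inverse bookkeeping in the inverse construction: verifying that the argument $f'(t) - Bf(t) - Cf(t-\tau)$ of $\Sigma^{+}$ actually lies in $V$ for a.e.\ $t$ when $f \in \mathcal{V}$, and that the resulting $g'$ is genuinely in $L^2$ on each $J_n$ despite the delayed coupling to $f$ (which is propagated through the iteration). The stepwise structure on the intervals $J_n$ in Proposition \ref{supportZ} should accommodate both checks, but each step needs a careful measurable selection of $\Sigma^{+}$ and a uniform control of the constants analogous to the $\alpha, c_n$ from the full-rank argument.
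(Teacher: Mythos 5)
The paper gives no argument for this corollary (it explicitly skips the proof), so your proposal has to stand on its own, and it does not: the step you defer as ``the main obstacle'' is not a bookkeeping issue but the precise point where the route fails. Your reduction is fine up to and including $\nu=\Gamma\mu$ and $\operatorname{supp}(\nu)=\overline{\Gamma\bigl(C_{\mbold{0}}([0,T],\mbb{R}^n)\bigr)}$, and indeed the construction of $\Gamma$ via \eqref{map3} never uses $\Sigma^{-1}$. But the identification $\Gamma(\mathcal{H}_n)=\mathcal{V}$ is wrong in both directions. Forward: if $g\in\mathcal{H}_n$ and $f=\Gamma g$, then \eqref{map1} gives $f'(t)=Bf(t)+Cf(t-\tau)+\Sigma g'(t)$, so what you obtain is $f'(t)-Bf(t)-Cf(t-\tau)\in V$ a.e., i.e.\ membership in $\mathcal{W}:=\{f\in E:\ f'\in H,\ f'(t)-Bf(t)-Cf(t-\tau)\in V \text{ a.e.}\}$; concluding $f'(t)\in V$ is a non sequitur, since the drift $Bf(t)+Cf(t-\tau)$ need not lie in $V$. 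Backward: for $f\in\mathcal{V}$ the vector $f'(t)-Bf(t)-Cf(t-\tau)$ need not lie in the range of $\Sigma$, so $\Sigma\Sigma^{+}$ does not act as the identity on it and your candidate $g$ satisfies $\Gamma g\neq f$; no measurable-selection or stepwise refinement on the intervals $J_n$ can repair this, because the obstruction is algebraic.

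A concrete example shows the gap is fatal to the identification with $\overline{\mathcal{V}}$ as such: take $d=2$, $C=0$, $\Sigma=\operatorname{diag}(1,0)$ (so $V=\operatorname{span}(e_1)$) and $B=\bigl(\begin{smallmatrix}0&0\\ 1&0\end{smallmatrix}\bigr)$. Then $Z^1_t=W^1_t$ and $Z^2_t=\int_0^t W^1_s\,\mrm{d}s$, so $\operatorname{supp}(\nu)$ is the closed graph $\{(g,\int_0^{\cdot}g(s)\,\mrm{d}s):g\in C_{\mbold{0}}\}=\overline{\mathcal{W}}$, whereas $\overline{\mathcal{V}}=C_{\mbold{0}}\times\{0\}$; the two sets differ. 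What your argument does prove, once both directions are rewritten with $\mathcal{W}$ in place of $\mathcal{V}$, is $\operatorname{supp}(\nu)=\overline{\mathcal{W}}$ (in $E$, and likewise in $H$ after composing with $j$). To recover the corollary as literally stated one needs an additional invariance hypothesis such as $BV\subseteq V$ and $CV\subseteq V$ (vacuous in the full-rank case $V=\mbb{R}^d$): then $f\in\mathcal{V}$ forces $f(t)\in V$ for all $t$, hence $Bf(t)+Cf(t-\tau)\in V$, and one checks inductively on the intervals $J_n$ that $\mathcal{W}=\mathcal{V}$. Without some such hypothesis (or reading $\mathcal{V}$ as $\mathcal{W}$), the identification you need cannot be established.
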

We skip the detailed proof of this corollary, which we will not use in the large deviations results given further on.
\begin{proposition} \label{injectiveR}
The covariance operator $R : H \to H$ of $Z_{t}$ defined by equation \eqref{covarop} is injective if and only if  the only closed subspace $F$ of $H$ such that $\nu(F) = 1$ is $F= H$, and this property holds if and only if the matrix $\Sigma$ has full rank $d$.
\end{proposition}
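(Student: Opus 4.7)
The plan is to prove the chain of equivalences: $R$ is injective $\iff$ the only closed subspace $F \subset H$ with $\nu(F) = 1$ is $F = H$ $\iff$ $\Sigma$ has full rank $d$. The first equivalence rests on the identity
\begin{equation*}
\langle f, R f \rangle_{H} = \mbb{E}\bigl[ \langle f, Z \rangle_{H}^{2} \bigr],
\end{equation*}
which I will obtain by expanding~\eqref{covarop} and using~\eqref{rho} and Fubini. Since $R$ is positive semi-definite, this shows that $f \in \ker R$ if and only if the centered Gaussian random variable $\langle f, Z \rangle_{H}$ vanishes almost surely. A two-way contrapositive then gives the first equivalence: a nonzero $f \in \ker R$ makes the closed hyperplane $\{ g \in H : \langle f, g \rangle_{H} = 0 \}$ a proper closed subspace of full $\nu$-measure, while any proper closed $F$ with $\nu(F) = 1$ admits a nonzero $f \in F^{\perp}$, for which $\langle f, Z \rangle_{H} = 0$ $\nu$-a.s.\ and hence $f \in \ker R$.

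For the second equivalence, the direction ``$\Sigma$ full rank $\Rightarrow$ only $F = H$ has full $\nu$-measure'' is already contained in the last sentence of Proposition~\ref{supportZ}.

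The hard part, and the step I expect will require the most care, is the reverse direction: explicitly producing a nonzero element of $\ker R$ when $\Sigma$ has rank strictly less than $d$. Pick $w \in \mbb{R}^{d}$ with $w \neq \mbold{0}$ and $\Sigma^{*} w = \mbold{0}$, available by rank deficiency. Then the scalar process $\eta_{t} = \langle w, Z_{t} \rangle_{\mbb{R}^{d}}$ loses its Brownian component and is almost surely absolutely continuous with
\begin{equation*}
\eta_{0} = 0, \qquad \eta'_{t} = w^{*} B Z_{t} + w^{*} C Z_{t - \tau}.
\end{equation*}
For an auxiliary scalar function $\phi : [0,T] \to \mbb{R}$ that is absolutely continuous with $\phi' \in L^{2}$ and $\phi(0) = \phi(T) = 0$, integration by parts in $\int_{0}^{T} \phi'(t) \eta_{t} \, \mrm{d} t = -\int_{0}^{T} \phi(t) \eta'_{t} \, \mrm{d} t$, combined with the change of variables $u = t - \tau$ in the $Z_{t - \tau}$ integral (using $Z \equiv \mbold{0}$ on $[-\tau, 0]$), will convert this identity into $\langle f, Z \rangle_{H} = 0$ almost surely, where
\begin{equation*}
f(t) = \phi'(t)\,w + \phi(t)\,B^{*} w + \mathbf{1}_{[0, T - \tau]}(t)\,\phi(t + \tau)\,C^{*} w.
\end{equation*}
By the identity from the first equivalence, $f \in \ker R$. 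It remains to choose $\phi$ so that $f \neq \mbold{0}$. I will take $\phi$ to be a nonzero smooth bump compactly supported in $(0, \min(\tau, T))$, which eliminates the third summand of $f$ and reduces it to $\phi'(t)\,w + \phi(t)\,B^{*} w$ on the support of $\phi$. If this were identically zero there, then at any $t$ with $\phi(t) > 0$ one would have $B^{*} w = -(\phi'(t)/\phi(t))\,w$, forcing $B^{*} w = \lambda w$ for a constant $\lambda$; but then $\phi$ would solve $\phi' + \lambda \phi = 0$ on its support and be a nonzero exponential, contradicting compact support. Hence any such nonzero $\phi$ yields $f \neq \mbold{0}$ in $\ker R$, completing the chain.
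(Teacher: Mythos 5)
Your argument is correct, and for the first equivalence it coincides with the paper's: the paper also writes $Q(f)=\langle f, Rf\rangle_H=\mbb{E}\bigl[\langle f, Z\rangle_H^2\bigr]$ via Fubini and identifies $\ker R$ with those $f$ whose orthogonal hyperplane $F(f)$ carries full $\nu$-measure, using self-adjointness/positive semi-definiteness exactly as you do. Where you genuinely diverge is the link to the rank of $\Sigma$: the paper disposes of both directions by citing Proposition~\ref{supportZ} (full-rank case) \emph{and} Corollary~\ref{genericSigma} (general rank), but the corollary's proof is explicitly skipped in the paper, so the rank-deficient direction is there only by reference. You instead make that direction self-contained: picking $w\neq\mbold{0}$ with $\Sigma^{*}w=\mbold{0}$, observing that $\eta_t=\langle w,Z_t\rangle_{\mbb{R}^d}$ is a.s.\ absolutely continuous with $\eta'_t=w^{*}BZ_t+w^{*}CZ_{t-\tau}$, and converting the integration-by-parts identity (with the shift $u=t-\tau$ and $Z\equiv\mbold{0}$ on $[-\tau,0]$) into $\langle f,Z\rangle_H=0$ a.s.\ for the explicit $f(t)=\phi'(t)w+\phi(t)B^{*}w+\mathbf{1}_{[0,T-\tau]}(t)\phi(t+\tau)C^{*}w$; your choice of a bump $\phi$ supported in $(0,\min(\tau,T))$ kills the delayed term, and your ODE argument ruling out $\phi'w+\phi B^{*}w\equiv\mbold{0}$ is sound (the scalar-multiple case forces an exponential, incompatible with compact support). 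This buys a verifiable, constructive exhibition of a nonzero element of $\ker R$ under rank deficiency — arguably filling a gap the paper leaves to an unproved corollary — at the cost of not recovering the finer information the corollary asserts, namely the description of the full support of $\nu$ in the degenerate case (which, as the paper notes, is not needed for its large deviations results).
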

\begin{proof}
For $f \in H$, the covariance operator $R$ defines  the quadratic form  
$$
Q(f) = \langle f, R f \rangle_H = \int_0^T \, \int_0^T \, f(s)^*\rho(s,t) f(t) \, \mrm{d}s \, \mrm{d}t.
$$ 
By construction, $Q(f) =\int_0^T \, \int_0^T \, f(s)^* \mbb{E} ( Z_s Z_t^* ) f(t) \, \mrm{d}s \, \mrm{d}t$. By Fubini’s theorem, this yields
\begin{equation*}
Q(f) = \mbb{E} \left[ \int_0^T \, \int_0^T \, ( f(s)^*Z_s ) ( Z_t^* f(t) ) \, \mrm{d}s \, \mrm{d}t \right] = \mbb{E} \left[ \left( \int_0^T \,  f(s)^*Z_s \, \mrm{d}s \right)^2 \right] = 
\mbb{E} [ ( \langle f, Z \rangle_H )^2 ].
\end{equation*}
Let $F(f)$ be the (closed) subspace of all $g \in H$ orthogonal to $f$. The last expression of $Q(f)$ shows that $Q(f) =0$ iff $\langle f, Z \rangle_H =0$ almost surely, and hence iff the random path $Z$ almost surely belongs to $F(f)$, which is equivalent to $\nu(F(f)) = 1$. 
Since the covariance operator $R$ is self-adjoint, one has $R f=0$ iff $Q(f) =0$. Hence, $R f = 0$ is equivalent to $\nu(F(f)) = 1$. So $R$ is injective iff the only closed subspace $F$ of $H$ such that $\nu(F) =1$ is $F= H$. But this property of $\nu$ holds iff the matrix $\Sigma$ is of full rank $d$ thanks to Proposition~\ref{supportZ} and Corollary~\ref{genericSigma}. 
\end{proof}
\begin{theorem} \label{thlambda}
Let $Z_t \in \mbb{R}^d $ be the centered Gaussian process verifying the delay SDE \eqref{e:dSDE-linear-centered} driven by the Brownian motion $W_t \in \mbb{R}^{d}$ with $d \times d$ matrix of diffusion coefficients $\Sigma$. Assume that $\Sigma$ has \emph{full rank} $d$. 
On the Banach space $E = C_{\mbold{0}} ([0,T]) $, the random paths $Z= Z([0, T])$ and $W= W([0, T])$ have respective Gaussian probability distributions $\nu$ and $\mu$. As seen in Proposition \ref{supportZ}, there is a continuous linear map $\Gamma : E \to E$ with continuous inverse $\Gamma^{- 1}$ such that $Z= \Gamma W$. 
On the space $E$, the probabilities $\nu$ and $\mu$ have respective large deviations rate functionals $\lambda_\nu$ and $\lambda_\mu$. 

For any $f \in E$, one then has
\begin{equation} \label{lambda.nu.mu}
\lambda_\nu (f) = \lambda_\mu (\Gamma^{-1} f).
\end{equation}
In particular, $\lambda_{\nu}$ inherits the three well-known properties of $\lambda_\mu$, namely convexity, lower semi-continuity, and inf-compacity. 
Let $\mathcal{H}$ be the subspace of all paths $g$ in $E$ such that $g’$ is in $H = L^2([0, T])$. Then $\lambda_\nu (f)$ is finite if and only if $f$ is in $\mathcal{H}$, and one then has
\begin{equation} \label{energy}
\lambda_{\nu} (f) = \frac{1}{2} \, \int_0^T \; \vnorm{\frac{d}{dt} \Gamma^{-1} f(t)}_{\mbb{R}^{d}}^{2}\,\mrm{d}t = \frac{1}{2} \, \int_0^T \; \vnorm{\Sigma^{-1} [ f’(t) - B f(t) - C f(t - \tau) ]}_{\mbb{R}^{d}}^{2} \, \mrm{d}t 
\end{equation}
for all $f \in \mcal{H}.$  

The vector space $\mathcal{H}$ can be endowed with the Sobolev norm 
\begin{equation} \label{sobolev}
\vnorm{f}_{\mathcal{H}} = \left(\int_0^T \; \vnorm{f’(t)}_{\mbb{R}^{d}}^{2}\, \mrm{d}t \right)^{1/2} ,
\end{equation}
and then becomes a Hilbert space, still denoted $\mathcal{H}$. Convergence in this Hilbert space implies convergence in $E$, but the converse is obviously not true. The restriction of $\lambda_\nu$ to $\mathcal{H}$ defines on the Hilbert space $\mathcal{H}$ a function which is finite, convex, continuous, and has an explicit Gateaux derivative $D \lambda_\nu(f)$ at every $f$ in $\mathcal{H}$.
\end{theorem}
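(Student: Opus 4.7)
My plan is to derive Theorem~\ref{thlambda} by pushing the classical Brownian motion rate functional through the bijection $\Gamma$ furnished by Proposition~\ref{supportZ}. The central observation is that if $\nu = \Gamma \mu$ as probability measures on $E$, then the Laplace transforms satisfy
\begin{equation*}
\hat{\nu}(v) \;=\; \int_{E} e^{\langle v,\Gamma w\rangle}\, \mrm{d}\mu(w) \;=\; \int_{E} e^{\langle \Gamma^{*} v, w\rangle}\, \mrm{d}\mu(w) \;=\; \hat{\mu}(\Gamma^{*} v)
\end{equation*}
for all $v \in E^{*}$, where $\Gamma^{*} : E^{*} \to E^{*}$ is the Banach-space adjoint. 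Since $\Gamma$ is a continuous linear bijection with continuous inverse (Proposition~\ref{supportZ}), the open mapping theorem and duality imply that $\Gamma^{*}$ is likewise a bijection of $E^{*}$ onto itself. Substituting into Definition~\ref{d:Cramer} and reparametrizing the supremum by $w = \Gamma^{*} v$ gives
\begin{equation*}
\lambda_\nu(f) \;=\; \sup_{v \in E^{*}} \bigl[ \langle \Gamma^{*}v, \Gamma^{-1} f\rangle - \log \hat{\mu}(\Gamma^{*}v) \bigr] \;=\; \lambda_\mu(\Gamma^{-1} f),
\end{equation*}
which establishes \eqref{lambda.nu.mu}.

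With the identity in hand, convexity, lower semi-continuity, and inf-compacity transfer directly: convexity is immediate because $\Gamma^{-1}$ is linear and $\lambda_\mu$ is convex; lower semi-continuity follows from continuity of $\Gamma^{-1} : E \to E$; and the level set $\{\lambda_\nu \leqs c\}$ equals $\Gamma(\{\lambda_\mu \leqs c\})$, hence is the continuous image of a compact set, hence compact. To obtain the explicit expression~\eqref{energy}, I invoke the classical Brownian rate functional (Proposition 6.3.8 in~\cite{Azencott-F-V}), namely $\lambda_\mu(g) = \tfrac12 \int_0^T \vnorm{g'(t)}_{\mbb{R}^{d}}^{2} \mrm{d}t$ when $g \in \mcal{H}$ and $+\infty$ otherwise. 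Combined with \eqref{lambda.nu.mu}, this gives $\lambda_\nu(f) < \infty$ iff $\Gamma^{-1} f \in \mcal{H}$. Proposition~\ref{supportZ} already guarantees that $\Gamma$ restricts to a bijection $\mcal{H} \to \mcal{H}$, so the finiteness condition becomes $f \in \mcal{H}$; plugging formula~\eqref{inversegamma} for $(\Gamma^{-1} f)'$ into $\lambda_\mu$ yields the second equality in~\eqref{energy}.

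For the final assertion, I treat $\mcal{H}$ as a Hilbert space under the Sobolev norm~\eqref{sobolev}. The continuous embedding $\mcal{H} \hookrightarrow E$ follows from $f(t) = \int_0^t f'(s)\,\mrm{d}s$ and Cauchy--Schwarz, which yield $\vnorm{f}_E \leqs \sqrt{T}\,\vnorm{f}_{\mcal{H}}$. Introduce the linear operator $L : \mcal{H} \to H$ by
\begin{equation*}
(Lf)(t) \;=\; \Sigma^{-1}\bigl[ f'(t) - B f(t) - C f(t - \tau) \bigr],
\end{equation*}
which is continuous from $\mcal{H}$ to $H$ (the term $f(t-\tau)$ is bounded in $\mcal{H}$-norm via the embedding into $E$). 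Then $\lambda_\nu(f) = \tfrac12 \vnorm{L f}_H^{2}$ is a continuous, convex quadratic form on $\mcal{H}$, and its Gateaux derivative at $f$ is the continuous linear functional
\begin{equation*}
D\lambda_\nu(f)(h) \;=\; \langle L f, L h\rangle_{H} \;=\; \int_0^T \bigl\langle \Sigma^{-1}[f'(t) - B f(t) - C f(t-\tau)],\; \Sigma^{-1}[h'(t) - B h(t) - C h(t-\tau)] \bigr\rangle_{\mbb{R}^{d}} \mrm{d}t,
\end{equation*}
whose existence is verified by a standard first-variation calculation.

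I expect the only real obstacle to be the clean verification of the transformation identity: one must be careful that $\Gamma^{*}$ is a bijection of $E^{*}$ (so that the supremum indeed sweeps out all of $E^{*}$) and that the pushforward identity $\hat{\nu}(v) = \hat{\mu}(\Gamma^{*} v)$ is legitimate for $\Gamma$ acting between infinite-dimensional Banach spaces. The rest of the argument reduces to invoking Proposition~\ref{supportZ} and the well-known Brownian motion result, together with standard Sobolev/quadratic-form bookkeeping.
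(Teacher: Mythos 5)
Your proposal is correct, and everything after the key identity \eqref{lambda.nu.mu} (transfer of convexity, lower semi-continuity and inf-compactness through the linear homeomorphism $\Gamma^{-1}$, finiteness of $\lambda_\nu$ exactly on $\mathcal{H}$ via the Schilder-type formula for $\lambda_\mu$ together with the fact from Proposition~\ref{supportZ} that $\Gamma$ restricts to a bijection of $\mathcal{H}$, the substitution of \eqref{inversegamma} to get \eqref{energy}, and the quadratic-form argument on the Sobolev space $\mathcal{H}$ for continuity and the Gateaux derivative) coincides with the paper's argument. Where you genuinely diverge is in how you prove \eqref{lambda.nu.mu} itself: you work directly from Definition~\ref{d:Cramer}, using the pushforward identity $\hat{\nu}(v)=\hat{\mu}(\Gamma^{*}v)$ and the fact that the adjoint $\Gamma^{*}$ is a bijection of $E^{*}$ (since $\Gamma$ is a Banach-space isomorphism, $(\Gamma^{*})^{-1}=(\Gamma^{-1})^{*}$), so that reparametrizing the supremum yields $\lambda_\nu=\lambda_\mu\circ\Gamma^{-1}$; this is legitimate because both Laplace transforms are finite for Gaussian measures, as the paper notes. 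The paper instead invokes the general image-measure (contraction-principle) results of \cite{Azencott-F-V} (Propositions 7.2.6--7.2.8), which give $\lambda_\nu(f)=\inf_{\Gamma g=f}\lambda_\mu(g)$ and then reduce the infimum to the single point $g=\Gamma^{-1}f$ by bijectivity. Your Legendre-duality route is more self-contained and elementary, needing only the definition of the Cramer transform and standard duality; the paper's route is more robust, since the contraction formulation continues to make sense when the map is not injective, which is precisely the situation relevant to a degenerate diffusion matrix (compare Corollary~\ref{genericSigma}), whereas your change-of-variables argument is tied to $\Gamma$ being an isomorphism.
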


\begin{proof} 
Let $W = W([0, T]) \in E$ be the random path of the Brownian motion $W_t \in \mbb{R}^d$ driving the delay SDE verified by $Z_t$. As proved above in Proposition \ref{supportZ}, there is a bijective continuous linear map $\Gamma: E \to E$ such that $Z= \Gamma W$. The respective probability distributions $\mu$ and $\nu$ of $W$ and $Z$ on the Banach space $E$ are then linked by $\nu= \Gamma \mu$. 
Recall that for any $g \in E$, the rate functional $\lambda_\mu (g)$ of $\mu$ is finite iff $g \in \mathcal{H}$ and is given by  
\begin{equation} \label{energy1}
\lambda_\mu (g) = \frac{1}{2} \int_0^T \, \vnorm{g’(t)}_{\mbb{R}^d}^2 \, \mrm{d}t.
\end{equation}
Since $\nu= \Gamma (\mu)$ with $\Gamma$ continuous, the generic results in Proposition 7.2.6, Subsection 7.2.7, and Proposition 7.2.8 in \cite{Azencott-F-V} show that the rate functional $\lambda_\nu$ of $\nu$ must verify, for all $f \in E$, 
$$
\lambda_\nu (f) = \inf_{ g \in K(f)} \lambda_\mu (g),
$$
where $K(f)$ is the set of all $g \in E$ such that $\Gamma g = f$. Due to Proposition \ref{supportZ}, for each $f \in E,$ the set $K(f)$ contains exactly one function $g = \Gamma^{-1} f$. Hence, we have for all $f \in E$, 
\begin{equation} \label{energy2}
\lambda_\nu (f) = \lambda_\mu (\Gamma^{-1} f).
\end{equation}

As is well known, $\lambda_\mu$ is convex and lower semi-continuous on $E$, and for any fixed $a >0$, the set $\vset{f \in E\, : \, \lambda_\mu(f) \leqs a}$ is compact. Since $\Gamma^{-1}$ is linear and continuous, the same three properties must then also hold for $\lambda_\nu.$ 
As seen in Proposition \ref{supportZ}, the function $\Gamma^{-1} f$ is in $\mathcal{H}$ iff $f \in \mathcal{H}$ and is given by 
\begin{equation} \label{energy3}
\frac{d}{dt} [ \Gamma^{-1} f ] (t) = \Sigma^{-1} [ f’(t) - B f(t) - C f(t - \tau)]  
\end{equation}
for almost all $t \in [0, T]$.
In view of equations \eqref{energy1}, \eqref{energy2}, and \eqref{energy3}, we see that $\lambda_\nu (f)$ is finite iff $f$ is in $\mathcal{H}$, and is given by, for $f\in \mathcal{H},$
\begin{equation} \label{energy4}
\lambda_\nu (f) = \frac{1}{2} \int_0^T \; \vnorm{\Phi f(t)}_{\mbb{R}^{d}}^2 \, \mrm{d}t ,
\end{equation} 
where the linear operator $\Phi : \mathcal{H} \to H$ is defined for $f \in \mathcal{H}$ by 
\begin{equation} \label{energy5}
\Phi f(t)  = \Sigma^{-1} [ f’(t) - B f(t) - C f(t - \tau) ].  
\end{equation} 
On the Hilbert space $\mathcal{H}$, the formulas \eqref{energy4} and \eqref{energy5} show that $\lambda_\nu(f)$ has a Gateaux derivative $D \lambda_\nu (f)$ given by, for all $h \in \mathcal{H}$, 
$$
D \lambda_\nu (f) h = \int_0^T \; \langle \Phi f(t) , \Phi h(t) \rangle_{\mbb{R}^{d}} \, \mrm{d}t.
$$
\end{proof}
\subsection{Optimal paths} \label{optimal}
The centered Gaussian process $Z_{t}$ driven by the delay SDE~\eqref{e:dSDE-linear-centered} does not depend on $\vep$. But the Gaussian diffusion with delay $X_{t} = X_t^{\vep} = m(t) + \vep Z_{t}$ does depend on $\vep.$ 
Let $p$ be any fixed given initial point $p = \gamma (0) = X_0$. Fix the deterministic past initial path $\gamma$ of $X_t$ on $[- \tau, 0]$ and any terminal point $Q \in \mbb{R}^{d}$ as well as a terminal time $T > 0$. 

Since the random path $Z$ and the Brownian path $W$ verify $Z= \Gamma W$, the event $\set{ X^{\vep}_0 = p } \cap \set{ X^{\vep}_T= Q }$ has probability equal to zero for all $\vep > 0$. 
So we fix a very small radius $r >0$, and we now study, as $\vep \to 0$, the probability of events such as
$$
\Omega(p, Q, r) = \{ X^{\vep}_0 = p \} \cap \{ \vnorm{X^{\vep}_T - Q}_{\mbb{R}^{d}} \leqs r \}. 
$$
Let $m(t) = \mbb{E} (X^{\vep}_t)$ and $m$ be the mean path $m([0, T])$. For all $\vep >0$, the centered path $Z$ obviously verifies
\begin{equation} \label{terminal.tube}
\mbb{P} [ X^\vep \in \Omega(p,Q,r) ] = \mbb{P} [ \vep Z \in U(M,r) ],
\end{equation}
where
$$
U(M,r) = \vset{f \in E = C_{\mbold{0}} ([0,T]) \, : \, \vnorm{f(T) - M}_{\mbb{R}^{d}} \leqs r},
$$
and $M = Q - m(T)$.

\begin{proposition} \label{minlambda}
Let $Z$ be the centered Gaussian random path driven by the delay SDE~\eqref{e:dSDE-linear-centered} with full-rank diffusion matrix $\Sigma$. Let $\lambda$ be the large deviations rate functional of $Z \in E= C_{\mbold{0}} ([0,T])$. Let $\rho(s,t) = \mbb{E} (Z_s Z_t^*)$. 
For any point $q \in \mbb{R}^{d},$ call $F_q$ the set of all $f \in E = C_{\mbold{0}} ([0,T])$ such that $f(T) = q$. 

There is then a unique path $f_q \in F_q$ such that
$$
\lambda(f_q) = \Lambda(F_q)  = \inf_{g \in F_q} \lambda(g).
$$
The path $f_q$ starts at $f_q(0)= \mbold{0}$, terminates at $f_q(T) = q$, and is given by 
\begin{equation} \label{e:minimizer}
f_q(s) = \rho(s,T) \rho(T,T)^{-1} q 
\end{equation}
for all $s \in [0, T]$, with rate functional
\begin{equation}
\label{e:Cramer-explicit} 
\lambda(f_q) = \Lambda(F_q) = \frac{1}{2}\langle \rho(T,T)^{-1} q, q \rangle_{\mbb{R}^{d}}.
\end{equation}
In $\mbb{R}^{d}$, fix any open ball $B(M,r)$ of center $M \neq 0$ and small radius $r >0$. The set $U(M,r)$ of all $f \in E $ such that $\vnorm{f(T) - M}_{\mbb{R}^{d}} \leqs r$ then verifies
\begin{equation}\label{minenergy}
\Lambda(U(M,r)) = \inf_{\vset{q \, : \, \vnorm{q-M}_{\mbb{R}^{d}} \leqs r}} \frac{1}{2}\langle \rho(T,T)^{-1} q, q \rangle_{\mbb{R}^{d}}.
\end{equation}

For fixed $M\neq 0$, there are two positive constants $c$ and $C$ such that for $r < c$ one has the following properties:
\begin{enumerate}[leftmargin=*, labelindent=\parindent, itemsep=0.5ex]
\item On the set of paths $U(M,r)$, there is a unique minimizer $f_{M,r}$ of $\lambda.$ \label{Min1}
\item $f_{M,r}$ is the path $f_q$ given by \eqref{e:minimizer} where $q = f_{M,r}(T)$ is the unique point minimizing \eqref{minenergy}. \label{Min2}
\item The minimizing path $f_M$ terminating at $f_M(T) = M$ verifies $\norm{f_{M,r} - f_M}_{E} < C r $. \label{Min3}
\end{enumerate}
Moreover, the closed set $U = U(M,r)$ and its interior $U^\circ$ verify $\Lambda(U) = \Lambda(U^\circ)$.
\end{proposition}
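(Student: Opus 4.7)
The key reduction is the fibration $U(M,r) = \bigcup_{q \in \overline{B(M,r)}} F_q$, which immediately yields $\Lambda(U(M,r)) = \inf_{\vnorm{q - M}_{\mbb{R}^d} \leqs r} \Lambda(F_q)$. The analysis thus factors into (i) minimizing $\lambda$ on each $F_q$ and (ii) a finite-dimensional minimization in $\mbb{R}^d$. By Theorem \ref{thlambda}, $\lambda$ is finite only on $\mathcal{H}$ and is a strictly convex quadratic form there, so existence and uniqueness of the minimizer on any closed affine subspace of $\mathcal{H}$ is automatic via the Hilbert projection theorem; the remaining task is to derive the explicit formulas.

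\textbf{Minimizer on $F_q$.} I would first verify that $\rho(T,T)$ is invertible: if $v \neq 0$ were in its kernel then $v^* Z_T = 0$ almost surely, forcing $\nu$ onto the proper closed subspace $\{f \in E : v^* f(T) = 0\}$, which contradicts Proposition \ref{supportZ}. Next I change variables via $g = \Gamma^{-1} f$, using that $\Gamma$ is a bijection of $\mathcal{H}$ onto itself. Then $\lambda(f) = \tfrac{1}{2} \vnorm{g'}_{H}^2$ and the constraint $f(T) = q$ becomes $(\Gamma g)(T) = q$. Unwinding the stepwise recursion \eqref{map3} produces a Volterra-type representation $(\Gamma g)(t) = \int_0^t K(t,s)\, g'(s) \, \mrm{d}s$ for a continuous matrix kernel $K(t,s)$ on $0 \leqs s \leqs t \leqs T$. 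Minimizing $\vnorm{u}_H^2$ over $u = g' \in H$ under the linear constraint $L u := \int_0^T K(T,s)\, u(s) \, \mrm{d}s = q$ gives, by Hilbert projection, the unique optimum $u_{\star}(s) = K(T,s)^* (LL^*)^{-1} q$. Promoting the pathwise identity $Z = \Gamma W$ to stochastic integrals (as in the proof of Proposition \ref{supportZ}) yields $Z_t = \int_0^t K(t,s) \, \mrm{d}W_s$, and It\^{o} isometry then identifies $\rho(s,T) = \int_0^s K(s,u) K(T,u)^* \, \mrm{d}u$ and in particular $LL^* = \rho(T,T)$. Substituting back produces the claimed formula $f_q(s) = \rho(s,T) \rho(T,T)^{-1} q$ together with $\lambda(f_q) = \tfrac{1}{2} \langle \rho(T,T)^{-1} q, q\rangle_{\mbb{R}^d}$. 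The boundary conditions $f_q(0) = 0$ and $f_q(T) = q$ follow at once, and $f_q \in \mathcal{H}$ from the piecewise $C^1$ regularity of $\rho(\cdot, T)$ derived in Section \ref{ss:covariance-solution}.

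\textbf{Closing the proposition.} Write $Q(q) = \tfrac{1}{2} \langle \rho(T,T)^{-1} q, q\rangle_{\mbb{R}^d}$. This is a strictly convex positive definite quadratic form on $\mbb{R}^d$ whose unconstrained minimum is at $q = 0$, so choosing $c = \vnorm{M}_{\mbb{R}^d}/2$ ensures that for every $r < c$ the closed ball $\overline{B(M,r)}$ excludes $0$ and $Q$ attains a unique minimizer $q_{M,r}$ on it. This gives parts (\ref{Min1}) and (\ref{Min2}) with $f_{M,r} = f_{q_{M,r}}$. For part (\ref{Min3}), the map $q \mapsto f_q$ from $\mbb{R}^d$ into $E$ is linear with operator norm bounded by $C = \sup_{0 \leqs s \leqs T} \vnorm{\rho(s,T) \rho(T,T)^{-1}}$, which is finite by continuity of $\rho$; since $\vnorm{q_{M,r} - M}_{\mbb{R}^d} \leqs r$, we obtain $\vnorm{f_{M,r} - f_M}_E = \vnorm{f_{q_{M,r} - M}}_E \leqs C r$. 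Finally, $U^{\circ} = \{f \in E : \vnorm{f(T) - M}_{\mbb{R}^d} < r\}$, hence $\Lambda(U^{\circ}) = \inf_{\vnorm{q-M} < r} Q(q) = \inf_{\vnorm{q-M} \leqs r} Q(q) = \Lambda(U)$ by continuity of $Q$ on $\mbb{R}^d$.

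\textbf{Main obstacle.} The substantive technical work is the Volterra representation $(\Gamma g)(t) = \int_0^t K(t,s) g'(s) \, \mrm{d}s$ together with its stochastic counterpart $Z_t = \int_0^t K(t,s) \, \mrm{d}W_s$. Extracting the kernel $K$ from the stepwise recursion \eqref{map3} and promoting the pathwise identity $Z = \Gamma W$ to stochastic integrals against $W$ requires careful It\^{o} calculus across the intervals $J_n$, but is essentially an extension of the technique already used in the proof of Proposition \ref{supportZ}. With that dictionary established, the remainder reduces to Hilbert-space projection and routine finite-dimensional convex analysis.
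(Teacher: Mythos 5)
Your proposal is correct, but it follows a genuinely different route from the paper. The paper works with the covariance operator $R$ on $H=L^2([0,T])$, shows its square root $S=\sqrt{R}$ is an integral operator with a continuous matrix kernel $\sigma(s,t)$ (via the eigenfunction expansion and a cited decay result), identifies $S(H)=\mathcal{H}$, and then runs a Lagrange-multiplier computation for the Gateaux-differentiable quadratic form $\tfrac12\vnorm{S^{-1}f}_H^2$ under the constraint $f(T)=q$, using $\int_0^T\sigma(s,t)\sigma(t,T)\,\mrm{d}t=\rho(s,T)$ to land on \eqref{e:minimizer} and \eqref{e:Cramer-explicit}. You instead pull everything back through $\Gamma$: writing $\lambda(f)=\tfrac12\vnorm{(\Gamma^{-1}f)'}_H^2$, representing $\Gamma$ on $\mathcal{H}$ by a Volterra kernel (the fundamental solution of the linear delay ODE times $\Sigma$), obtaining $Z_t=\int_0^t K(t,s)\,\mrm{d}W_s$ and hence $\rho(s,T)=\int_0^s K(s,u)K(T,u)^*\,\mrm{d}u$ by It\^{o} isometry, and then solving a minimal-norm problem under the finite-rank constraint $Lu=q$ with $LL^*=\rho(T,T)$. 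This buys you two things: you avoid the paper's only delicate analytic point (continuity of the kernel of $\sqrt{R}$, which the paper only sketches), and you avoid having to prove $S(H)=\mathcal{H}$; moreover your support-based verification that $\rho(T,T)$ is invertible makes explicit a fact the paper uses silently. The price is the Volterra and stochastic-integral representations of $\Gamma$ and $Z$, which you correctly flag as the main technical work; these are standard for linear delay SDEs (variation of constants with the fundamental matrix, plus the same interval-by-interval It\^{o} integration by parts already used in Proposition~\ref{supportZ}), so the gap is one of detail, not of substance. The endgame (fibration of $U(M,r)$ over terminal points, strict convexity of $q\mapsto\tfrac12\langle\rho(T,T)^{-1}q,q\rangle_{\mbb{R}^{d}}$, the linear bound giving (\ref{Min3}), and continuity giving $\Lambda(U)=\Lambda(U^\circ)$) matches the paper's argument essentially verbatim, with your choice $c=\vnorm{M}_{\mbb{R}^d}/2$ serving the same purpose as the paper's $c=\vnorm{M}_{\mbb{R}^{d}}/\vnorm{\rho(T,T)^{-1}}$.
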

\begin{proof}
Let $H= L^2([0, T])$. The covariance operator $R : H \to H$ of the random path $Z$ is self-adjoint and has finite trace. Since the matrix $\Sigma$ has full rank $d$, we have seen that $R,$  and therefore the self-adjoint operator $S = \sqrt{R},$ are injective.  Hence, $R$ is also positive definite on $H$, with a summable sequence of eigenvalues $\alpha_n > 0$, associated to eigenfunctions $t \to \psi_n(t) \in \mbb{R}^d$. These eigenfunctions $\psi_n$ must be continuous since the integral operator $R$ is defined by the continuous kernel $\rho(s,t)$, which takes values in the set of $d \times d$ matrices and verifies  $\rho(t,s) = \rho(s,t)^*$.
This forces $S$ to also be an integral operator defined on $H$ by a continuous matrix-valued kernel $\sigma(s,t)$, where $\sigma(t,s) = \sigma(s,t)^*$ is a $d \times d$ matrix. This property can be derived from Lemma 3.1 in \cite{FERREIRA}, based on the classical $L^2$-converging expansion 
$$
\sigma (s,t) = \sum_n \alpha_n^{1/2} \psi_n(s) \psi_n(t) ^*.
$$ 
One can show the continuity in $(s,t)$ of $\sigma(s,t)$ by a precise analysis of this series using the decay rates for the $\alpha_n$ provided in \cite{FERREIRA}. In particular, this implies that $g = S h$ is continuous for any $h\in H$.
Since $R$ and $S$ are injective, Proposition \ref{p:General-ld-result} implies that $\lambda(f)$ is finite for $f \in H$ if and only if $f \in S(H)$, and is given in this case by
\begin{equation} \label{Sformula}
\lambda (f) = \frac{1}{2} \int_0^T \; \vnorm{S^{-1} f (t)}_{\mbb{R}^{d}}^2 \, \mrm{d}t.
\end{equation}

In $H,$ consider any open ball $A$ centered at $g = Sh$ for a fixed $h \in H$. We have $\Lambda(A) \leqs \lambda(g) < \infty$ and this implies, due to the large deviations inequalities \eqref{eq:varadhan},
$$
\mbb{P} (\vep Z \in A) > e^{- \vep^{-2} \Lambda(A)} \geqs e^{- \vep^{-2} \lambda(g)} > 0. 
$$
If $g(0)$ were not $\mbold{0}$, we could choose the radius of $A$ small enough to force $A$ and $E = C_{\mbold{0}} ([0,T])$ to be disjoint. Since $\vep Z$ is in $E$ almost surely, we would then have $\mbb{P} (\vep Z \in A) = 0$, which is a contradiction. So we must have $g(0) = \mbold{0}$ and $g = Sh$ must be in $E$. We thus conclude that $S(H)$ is included in $E$. 
Hence $S(H)$ is exactly the set of all $f \in E$ such that $\lambda(f)$ is finite. But due to Theorem \ref{thlambda}, this set coincides also with the set $\mathcal{H}$ of all $f \in E$ having a derivative $f’$ in $H$. Hence we must have $\mathcal{H}= S(H).$ The formula \eqref{Sformula} is hence valid for all $f \in \mathcal{H}.$ 
Let $F_q= \vset{f \in E \, : \, f(T)=q}$. By definition of $\Lambda$, we have 
$$ 
\Lambda(U(M,r)) = \inf_{\vset{q \, : \, \vnorm{q - M}_{\mbb{R}^{d}} \leqs r}} \Lambda(F_q). 
$$
The set of paths $V_q = F_q \cap \mathcal{H}$ is obviously not empty, and $\lambda(f)$ is finite iff $f$ is in $\mathcal{H}$. Hence, we have 
$$
\Lambda(F_q) = \Lambda(V_q) = \inf_{f \in V_q} \lambda (f)  < \infty.
$$

Once endowed with its Sobolev norm, the subspace $\mathcal{H}$ of $E$ becomes a Hilbert space. The linear functional $\mathcal{G} f = f(T) = \int_0^T f’(t) \, \mrm{d}t$ is continuous on this Hilbert space. Then $V_q$ is the closed convex set of all $f \in \mathcal{H}$ such that $\mathcal{G} f = q$. 
To find the minimum $\Lambda(V_q)$ of the Gateaux-differentiable convex functional $\lambda(f)$ on $V_q$, one can hence apply classical Lagrange multiplier theory on the Hilbert space $\mathcal{H}$ (see \cite{Luenberger-1969}). 

On $\mathcal{H}$, the Lagrangian of this minimization under one continuous linear constraint is 
$$
\mcal{L}_{f} = \lambda(f) - \langle \eta , (f(T) - q) \rangle_{\mbb{R}^{d}},
$$
where $\eta \in \mbb{R}^d$ is a vector of Lagrange multipliers and $\langle \cdot , \cdot \rangle_{\mbb{R}^{d}}$ is the scalar product in $\mbb{R}^d$. 
On the Hilbert space $\mathcal{H},$ the Gateaux derivative $D\mcal{L}_{f}$ of $\mcal{L}_{f}$ is given by
\begin{equation} \label{Lagr1}
D\mcal{L}_{f} \varphi = D\lambda(f) \varphi - \langle \eta , \varphi(T) \rangle_{\mbb{R}^{d}} 
\end{equation}
for all $\varphi \in \mathcal{H}$,
where $D\lambda(f)$ is the Gateaux derivative of $\lambda$ on the Hilbert space $\mathcal{H}$. 
For $f \in\mathcal{H}$, the Gateaux derivative $D\lambda(f)$ of the quadratic form $\lambda(f)$ defined by \eqref{Sformula} verifies for all $\varphi \in \mathcal{H}$
$$
D\lambda(f) \varphi = \int_0^T \; \langle S^{-1}f (t) , S^{-1}\varphi(t) \rangle_{\mbb{R}^{d}} \, \mrm{d}t.
$$
Fix any $f \in V_q$ minimizing $\lambda(f)$ on $V_q$. Then, $f \in \mathcal{H}$ and verifies $D\mcal{L} (f) = 0$. Fix an arbitrary $h \in H$. The function $\varphi = S h$ is then in $ S(H) = \mathcal{H}$. In view of equation \eqref{Lagr1}, we then have
\begin{equation} \label{Lagr2}
0 =D\mcal{L}_{f} \varphi = 
\int_0^T \; \langle S^{-1}f (t) , h(t) \rangle_{\mbb{R}^{d}} \, \mrm{d}t -  \langle \eta, Sh(T) \rangle_{\mbb{R}^{d}}.
\end{equation}
Using the continuous matrix-valued kernel $\sigma(s,t)$ of $S$, we have 
$$
\langle \eta, Sh(T) \rangle_{\mbb{R}^{d}} = \int_0^T \;  \langle\eta, \sigma(T,t)h(t) \rangle_{\mbb{R}^{d}}\, \mrm{d}t =
\int_0^T \; \langle \sigma(T,t)^* \eta, h(t) \rangle_{\mbb{R}^{d}} \, \mrm{d}t = \int_0^T \; \langle \sigma(t, T) \eta, h(t) \rangle_{\mbb{R}^{d}} \, \mrm{d}t.
$$
Hence, equation \eqref{Lagr2} becomes
$$
0 = \int_0^T \; \langle S^{-1}f (t) - \sigma(t, T) \eta, h(t) \rangle_{\mbb{R}^{d}} \, \mrm{d}t. 
$$
Since $h$ is arbitrary in $H= L^2([0, T])$, we conclude that $S^{-1}f (t) = \sigma(t, T) \eta$ for almost all $t \in [0, T]$.
Hence, we have for all $s \in [0, T]$,
$$
f(s) = S [S^{-1}f] (s) = \int_0^T \; \sigma(s,t) S^{-1}f(t) \, \mrm{d}t = 
\left[\int_0^T \; \sigma(s,t) \sigma(t, T) \, \mrm{d}t \right] \eta.
$$
Since $S^2 = R$, the last integral is obviously equal to $\rho(s,T)$, and we obtain 
$$
f(s) = \rho(s,T) \eta 
$$
for all $s \in [0,T]$.
Since $f$ is in $V_q,$ we have $f(T) = q$ so that $\rho(T,T) \eta= q$. The unique minimizer $f_q$ of $\lambda(f)$ in $V_q$ is then given by 
\begin{equation} \label{q0}
f_q(s) = \rho(s,T) \rho(T,T)^{-1} q   
\end{equation}
for all $s \in [0,T]$.

To compute $\Lambda(F_q) = \Lambda(V_q) =\lambda(f_{q})$, we write
$$
\lambda(f_{q}) = \frac{1}{2} \int_0^T \;  \langle S^{-1} f_{q}(t), S^{-1} f_{q}(t) \rangle_{\mbb{R}^{d}} \, \mrm{d}t = 
\frac{1}{2} \int_0^T \; \langle \sigma(t, T) \eta, \sigma(t, T) \eta \rangle_{\mbb{R}^{d}} \, \mrm{d}t.
$$
Since $\sigma(t, T)^* = \sigma(T, t)$, the last integral becomes 
$$
\frac{1}{2} \left\langle \eta, \left[ \int_0^T \; \sigma(T, t) \sigma(t, T)\,  \mrm{d}t \right] \eta \right\rangle_{\mbb{R}^{d}} = \frac{1}{2} \langle \eta, \rho(T,T) \eta\rangle_{\mbb{R}^{d}} = \frac{1}{2}\langle \rho(T,T)^{-1} q, q \rangle_{\mbb{R}^{d}}.
$$
We have thus proved that 
$$
\Lambda(F_q) = \lambda(f_q) =  \frac{1}{2}\langle \rho(T,T)^{-1} q, q \rangle_{\mbb{R}^{d}},
$$
which implies 
\begin{equation} \label{q1}
\Lambda(U(M,r)) = \inf _{\vset{q \, : \, \vnorm{q - M}_{\mbb{R}^{d}} \leqs r}} \lambda(f_q) = \min_{\vset{q \, : \, \vnorm{q - M}_{\mbb{R}^{d}} \leqs r}} \frac{1}{2}\langle \rho(T,T)^{-1} q, q \rangle_{\mbb{R}^{d}}.
\end{equation}

A linear transformation by $\frac{1}{\sqrt{2}}\sqrt{\rho(T,T)^{-1} }$ replaces this last minimization by finding the closest point to $\mbold{0}$ within a small hyper-ellipsoid $J$ centered at $M$. 
Impose $r <c$ with $c = \frac{\vnorm{M}_{\mbb{R}^{d}}}{\vnorm{\rho(T,T)^{-1}}}$ to be sure that $\mbold{0}$ is not in $J$. Since $J$ is strictly convex, there is a unique point $x \in J$ minimizing the convex function $\vnorm{x}_{\mbb{R}^{d}}^2$, and $x$ lies on the boundary of $J$.
So for $r <c $ and $\vnorm{q - M}_{\mbb{R}^{d}} \leqs r,$ there is a unique $q = q(M,r)$ minimizing $\lambda(f_q)$ and $\vnorm{q-M}_{\mbb{R}^{d}} = r$. Due to formula \eqref{q1}, the corresponding extremal $f_q$, which we denote $f_{M,r},$ is then the unique path minimizing $\lambda$ on $U(M,r)$. This proves the assertions (\ref{Min1}) and (\ref{Min2}). 
By formula \eqref{q0} with $q= q(M,r)$ and $t \in [0, T]$, we have
$$
\vnorm{f_{M,r}(t) - f_M(t)}_{\mbb{R}^{d}}\leqs \vnorm{\rho(t,T)}\vnorm{\rho(T,T)^{-1}} \vnorm{q - M}_{\mbb{R}^{d}} \leqs C r,
$$
where 
$C = \vnorm{\rho(T,T)^{-1}} \max_{t \in [0,T]} \vnorm{\rho(t,T)}$.
This proves assertion (\ref{Min3}).

The interior $U^{\circ}$ of $U= U(M,r)$ is obviously the set of all $f \in E$ such that $\vnorm{f(T) - M}_{\mbb{R}^{d}} <r$. One can then replicate the preceding proof replacing $U$ by $U^\circ$ and the inequalities $\vnorm{q - M}_{\mbb{R}^{d}} \leqs r $ by $\vnorm{q - M}_{\mbb{R}^{d}} < r $ to obtain 
$$
\Lambda(U^{\circ}) = \inf _{\vset{q \, : \, \vnorm{q - M}_{\mbb{R}^{d}} < r}} \lambda(f_q) = \inf _{\vset{q \, : \, \vnorm{q - M}_{\mbb{R}^{d}} < r}} \frac{1}{2}\langle \rho(T,T)^{-1}q, q \rangle_{\mbb{R}^{d}} = \Lambda(U).
$$
\end{proof}
\subsection{Probabilistic interpretation}
\label{ss:tube}
For any $f \in E$ and small $r >0$ call $\Tube(f,r)$ the open ball of center $f$ and radius $r$ in $E$, which can be viewed as a thin tube around the path $f$.
\begin{proposition} \label{probtube1}
Let $Z \in E= C_{\mbold{0}} ([0,T])$ be the centered Gaussian random path driven by the delay SDE~\eqref{e:dSDE-linear-centered} with matrix $\Sigma$ of full rank $d$.  Call $\lambda$ the large deviations rate functional of $Z$ on $E$. Let $G= \Tube(f,r) \subset E$ be any open tube with center $f \in E$ and radius $r$.
We then have $\Lambda(G) = \Lambda(\bar{G})$, as well as  the following limit:
\begin{equation} \label{limprobtube}
\lim_{\vep \to 0} \; \frac {1}{\vep^2} \log \mbb{P} [ \vep Z \in G ] =  - \Lambda(G) = - \Lambda(\bar{G}).
\end{equation}
\end{proposition}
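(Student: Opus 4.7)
The plan is to obtain equation~\eqref{limprobtube} by combining Varadhan's bounds (Theorem~\ref{thrm:GeneralLargeDev}) with the equality $\Lambda(G)=\Lambda(\bar G)$. Since $G=\Tube(f,r)$ is open by construction, $G^{\circ}=G$, and the inequalities \eqref{eq:varadhan} already sandwich both $\liminf$ and $\limsup$ of $\vep^{2}\log\mbb{P}(\vep Z\in G)$ between $-\Lambda(G)$ and $-\Lambda(\bar G)$. The entire proof therefore reduces to identifying these two Cramer values, and the argument stays in $E=C_{\mbold{0}}([0,T])$ throughout.

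The inequality $\Lambda(\bar G)\leqs\Lambda(G)$ is automatic from $G\subset\bar G$. For the reverse direction the plan is a soft convexity-plus-density argument. Fix any $g\in\bar G$ with $\lambda(g)<\infty$ (if no such $g$ exists, both infima equal $+\infty$ and we are done); by Theorem~\ref{thlambda} this forces $g\in\mathcal{H}$. Using the density of $\mathcal{H}$ in $E$, which was established inside the proof of Proposition~\ref{supportZ}, pick a perturbation direction $\phi\in\mathcal{H}\cap G$ (nonempty because $G$ is an open subset of $E$ and $\mathcal{H}$ is dense). For $\theta\in(0,1]$ set $g_{\theta}=(1-\theta)g+\theta\phi$. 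Since $\mathcal{H}$ is a vector subspace, $g_{\theta}\in\mathcal{H}$, and
\begin{equation*}
\vnorm{g_{\theta}-f}_{E}\leqs (1-\theta)\vnorm{g-f}_{E}+\theta\vnorm{\phi-f}_{E}\leqs (1-\theta)r+\theta\vnorm{\phi-f}_{E}<r,
\end{equation*}
so $g_{\theta}\in G$. Convexity of $\lambda$ on $E$ (Theorem~\ref{thlambda}) then yields $\Lambda(G)\leqs\lambda(g_{\theta})\leqs (1-\theta)\lambda(g)+\theta\lambda(\phi)$. Letting $\theta\to 0^{+}$ gives $\Lambda(G)\leqs\lambda(g)$, and infimizing over $g\in\bar G$ gives $\Lambda(G)\leqs\Lambda(\bar G)$.

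Plugging $\Lambda(G)=\Lambda(\bar G)$ back into the Varadhan sandwich collapses $\liminf$ and $\limsup$ to the common value $-\Lambda(G)$, which is exactly \eqref{limprobtube}. The only step that demands any genuine care is the simultaneous placement of the perturbation direction $\phi$ inside both $\mathcal{H}$ (so that $\lambda(\phi)<\infty$ and the convex-combination bound is informative) and $G$ (so that $g_{\theta}$ stays in the open tube for every $\theta\in(0,1]$); the strict inequality $\vnorm{\phi-f}_{E}<r$ afforded by $G$ being open, together with the density of $\mathcal{H}$ in $E$, makes these two requirements simultaneously satisfiable, and no further analytic obstruction should arise.
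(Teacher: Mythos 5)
Your proof is correct, and its skeleton is the same as the paper's: apply Varadhan's inequalities \eqref{eq:varadhan} to the open set $G$ and reduce everything to the identity $\Lambda(G)=\Lambda(\bar{G})$, which both you and the paper obtain by sliding a point of $\bar{G}$ into $G$ along a convex combination with a finite-energy path in $\mathcal{H}\cap G$ (your $\phi$, the paper's smooth path $u$). The mechanics differ in one respect worth noting. The paper first invokes lower semi-continuity and inf-compactness of $\lambda$ to produce a minimizer $f\in\bar{G}$ with $\lambda(f)=\Lambda(\bar{G})$, notes $f\in\mathcal{H}$, and then uses the continuity of $\lambda$ on the Sobolev space $\mathcal{H}$ to conclude $\lambda\bigl((1-1/n)f+(1/n)u\bigr)\to\lambda(f)$. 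You instead take an arbitrary finite-energy $g\in\bar{G}$ and control the segment purely by convexity, $\lambda(g_{\theta})\leqs(1-\theta)\lambda(g)+\theta\lambda(\phi)$, let $\theta\to 0^{+}$, and infimize over $g$ at the end. Your variant is slightly more economical: it needs only the convexity of $\lambda$ and the characterization $\{\lambda<\infty\}=\mathcal{H}$ from Theorem~\ref{thlambda}, and dispenses with both the existence of a minimizer on $\bar{G}$ and the Hilbert-space continuity of $\lambda$; the paper's route, on the other hand, runs through the minimizer-on-a-closed-set mechanism that it uses again in Proposition~\ref{probtube2}. One cosmetic remark: your fallback case ``no $g\in\bar{G}$ with $\lambda(g)<\infty$'' can never occur, since $\phi\in\mathcal{H}\cap G$ already gives $\Lambda(\bar{G})\leqs\Lambda(G)\leqs\lambda(\phi)<\infty$; keeping it does no harm, but it could simply be deleted.
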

\begin{proof}
Since smooth paths are dense in $E$, the open tube G contains at least one smooth path $u$. Then $u$ is in $\mathcal{H}$, which forces $\lambda(u)$ to be finite due to Theorem \ref{thlambda}. In particular, both $\Lambda(G)$ and $\Lambda(\bar{G})$ are finite. 
We have seen that $\lambda(g)$ is lower semi-continuous for $g \in E$ and has the inf-compactness property. Hence, $\lambda$ reaches its minimum on any closed subset of $E$. So, $\bar{G}$ must contain a path $f$ such that $\lambda(f) = \Lambda(\bar{G})$. Thus,  $\lambda(f)$ is finite, which implies $f \in \mathcal{H}.$ 
Since $f \in\bar{G}$ and $u \in G$, the convex open tube $G$ will contain $g_n = (1-1/n) f + (1/n) u$ provided $n$ is large enough. However, the sequence $g_n$ is in $\mathcal{H}$ and as $n \to \infty$ clearly converges to $f$ in the Hilbert space $\mathcal{H}$. As proved above, $\lambda$ is continuous on this Hilbert space, so that $\lim_{n \to \infty} \lambda(g_n) = \lambda(f)$. Since $g_n \in G$, we have $\Lambda(G) \leqs \lambda(g_n)$. This yields as $n \to \infty$ 
$$
\Lambda(G) \leqs \lambda(f) = \Lambda(\bar{G}).
$$
Since $\Lambda$ is a decreasing set functional, one always has $\Lambda(\bar{G}) \leqs \Lambda(G) $, so we conclude that $\Lambda(G) = \Lambda(\bar{G}).$
Equation \eqref{limprobtube} is then a direct consequence of the generic large deviations inequalities (see equation \eqref{eq:varadhan}).
\end{proof}
\begin{proposition} \label{probtube2}
In $\mbb{R}^d$, fix any terminal point $M \neq \mbold{0}$. Let $U(M,r)$ be the set of all paths $g \in E = C_{\mbold{0}} ([0,T])$ such that $\vnorm{g(T) - M}_{\mbb{R}^{d}} \leqs r$. As proved above, for any  $r$ small enough, there is a unique path $f_{M,r}$ in $U(M,r)$ which minimizes $\lambda$ on $U(M,r)$.
In the space $E$, let $G$ be the open tube of center $f_{M,r}$ and radius $r$. We then have
$$
\Lambda(G) = \Lambda(\bar{G}) = \lambda(f_{M,r}). 
$$

Fix $M \neq \mbold{0}$ and $r$ small enough. One has then the precise large deviation result
\begin{equation} \label{limprobtube2}
\lim_{\vep \to 0} \; \frac {1}{\vep^2} \log \mbb{P} [ \vep Z \in \Tube(f_{M, r}, r) ]= - \lambda(f_{M,r}). 
\end{equation}
\end{proposition}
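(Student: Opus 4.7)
The statement has two parts: the triple identity $\Lambda(G) = \Lambda(\bar{G}) = \lambda(f_{M,r})$, and the precise limit \eqref{limprobtube2}. The plan is to establish the identity first, then deduce the limit as a direct corollary of Varadhan's inequalities.

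The equality $\Lambda(G) = \Lambda(\bar{G})$ comes for free from Proposition~\ref{probtube1}, which was proved for arbitrary open tubes and applies verbatim here. For the identification $\Lambda(\bar{G}) = \lambda(f_{M,r})$, the upper bound $\Lambda(G) \leqs \lambda(f_{M,r})$ is trivial because $f_{M,r}$ is the center of the open tube $G$, hence in $G$. For the matching lower bound I would work on the Hilbert space $\mcal{H}$, where $\lambda$ is convex, continuous, and Gateaux-differentiable by Theorem~\ref{thlambda}. Any minimizing sequence in $\bar{G}$ eventually has finite $\lambda$-value and hence lies in $\mcal{H}$, and the inf-compactness of $\lambda$ yields a minimizer $g^{\star} \in \bar{G}$. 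Combining the explicit form $f_q(s) = \rho(s,T) \rho(T,T)^{-1} q$ of the endpoint-constrained minimizers from Proposition~\ref{minlambda} with the Lagrange optimality condition characterizing $q(M,r)$ on $\partial \bar{B}(M,r)$, one should identify $g^{\star}$ with $f_{M,r}$.

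With the first assertion in hand, the limit \eqref{limprobtube2} follows directly from Theorem~\ref{thrm:GeneralLargeDev}: since $G$ is open, $G^{\circ} = G$, and the two-sided inequality
\begin{equation*}
- \Lambda(G) \leqs \liminf_{\vep \to 0} \vep^{2} \log \mbb{P}[\vep Z \in G] \leqs \limsup_{\vep \to 0} \vep^{2} \log \mbb{P}[\vep Z \in G] \leqs - \Lambda(\bar{G})
\end{equation*}
collapses to the common value $- \lambda(f_{M,r})$ at both ends.

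The main obstacle is the lower bound $\Lambda(\bar{G}) \geqs \lambda(f_{M,r})$. Any candidate minimizer $f_{q'} \in \bar{G}$ with endpoint $q' \in \mbb{R}^{d}$ satisfies $\| f_{q'} - f_{M,r} \|_{E} \leqs r$, which translates into a linear constraint on $q' - q(M,r)$ through the kernel $\rho(\cdot, T) \rho(T,T)^{-1}$. Reconciling this constraint with the Lagrange optimality of $q(M,r)$ on $\partial \bar{B}(M,r)$, and using the smallness of $r$ crucially, is the key analytical step needed to close the argument.
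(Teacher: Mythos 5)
Your reduction is sound in its outer layers: the upper bound $\Lambda(G) \leqs \lambda(f_{M,r})$ because $f_{M,r} \in G$, the equality $\Lambda(G) = \Lambda(\bar{G})$ from Proposition~\ref{probtube1}, and the passage from the identity to the limit \eqref{limprobtube2} via the large deviations inequalities are all exactly as in the paper. But the lower bound $\Lambda(\bar{G}) \geqs \lambda(f_{M,r})$ is the entire content of the proposition, and you do not prove it: extracting a minimizer $g^{\star} \in \bar{G}$ by inf-compactness is fine, but "identify $g^{\star}$ with $f_{M,r}$" via the endpoint formula and the Lagrange condition is announced rather than carried out, and you say yourself that reconciling the tube constraint with the optimality of $q(M,r)$ is "the key analytical step needed to close the argument." As it stands this is a plan, not a proof.

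The paper closes this step by a different, more global comparison that never analyzes minimizers of $\lambda$ over $\bar{G}$ directly. It works with $U = U(M,r)$, on which $f_{M,r}$ is the \emph{unique} minimizer by Proposition~\ref{minlambda}, asserts the inclusion $G = \Tube(f_{M,r},r) \subset U$, and then uses lower semicontinuity and inf-compactness of $\lambda$ to produce a minimizer of $\lambda$ on the closed set $U \setminus G$; uniqueness on $U$ forces $\Lambda(U \setminus G) > \Lambda(U)$, hence $\Lambda(G) = \Lambda(U) = \lambda(f_{M,r})$, and Proposition~\ref{probtube1} finishes. Your instinct that the endpoint analysis is delicate is well founded, and you should be aware of why your route stalls: a path $g \in \Tube(f_{M,r},r)$ only satisfies $\vnorm{g(T)-M}_{\mbb{R}^{d}} < 2r$ (since $f_{M,r}(T) = q(M,r)$ lies on the sphere of radius $r$ about $M$), so tube paths may end outside the closed ball; and because $q(M,r)$ is a \emph{boundary} minimizer of the quadratic form $q' \mapsto \tfrac{1}{2}\langle \rho(T,T)^{-1}q', q' \rangle_{\mbb{R}^{d}}$, this form strictly decreases for endpoints $q'$ just outside the ball, while the corresponding extremals $f_{q'}$ belong to $G$ by the Lipschitz bound $\vnorm{f_{q'} - f_{q}}_{E} \leqs C \vnorm{q'-q}_{\mbb{R}^{d}}$ from the proof of Proposition~\ref{minlambda}. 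So a purely endpointwise Lagrange comparison cannot by itself deliver $\Lambda(\bar{G}) \geqs \lambda(f_{M,r})$; some additional ingredient is indispensable, and in the paper that ingredient is precisely the inclusion $G \subset U(M,r)$ — an inclusion which, given the $2r$ estimate above, you should scrutinize rather than take for granted if you want to complete your argument along either route.
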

\begin{proof}
Fix a terminal point $M \neq \mbold{0}$. Let $U = U(M,r)$ be the set of all $g \in E$ such that $\vnorm{g(T) - M}_{\mbb{R}^{d}} \leqs r$. By Proposition \ref{minlambda}, for $r$ small enough, there is a unique $f = f_{M,r}$ in $U$ such that $\Lambda(U) = \lambda(f)$.
The open tube $G = \Tube(f, r)$ is included in the closed set $U$. On the space $E$, the function $\lambda$ is lower semi-continuous and has the inf-compactness property, and hence must reach its minimum on any closed subset of $E$. Since $U - G$ is closed in $E$, there must then be a $g \in U - G$ such that $\lambda(g) = \Lambda(U-G)$. We have then 
$$
\lambda(g) = \Lambda(U - G) \geqs \Lambda(U) = \lambda(f).
$$
If $\lambda(g) = \lambda(f)$, we would have $g = f$ because $g \in U$ and $f$ is the unique minimizer of $\lambda$ in $U$. But we cannot have $g = f$ because $g$ is not in $G$ and $f$ is in $G$. We thus conclude that $\lambda(g) > \lambda(f)$, and hence $\Lambda(U - G)  > \Lambda(U)$. 
By definition, $\Lambda(A)$ is the infimum of $\lambda$ on $A$, so that 
$$
\Lambda(U) = \min \vset{\Lambda(U - G), \Lambda(G)}. 
$$
Since $\Lambda(U - G)  > \Lambda(U)$, we deduce that 
$\Lambda(G) = \Lambda(U) = \lambda(f)$.
Applying Proposition \ref{probtube1} to the tubes $G$ and $\bar{G}$ then concludes the proof. 
\end{proof}

\subsection*{Probabilistic interpretation for the process $X^{\vep}$}

The preceding results can immediately be interpreted in terms of the non-centered random path $X^{\vep} = m + \vep Z$. 
Fix the past initial path $\gamma$ of the Gaussian diffusion with delay $X_t = X^\vep_t$. Set $p= \gamma(0)$. Fix any terminal point $Q \neq m(T)$. Let $M= Q- m(T)$. Fix $r$ small enough. Let $q = q(T, m(T), r)$ be the unique point minimizing the quadratic form $\frac{1}{2}\langle \rho (T,T)^{-1} (q - m(T)), (q - m(T))\rangle_{\mbb{R}^{d}}$ on the sphere of center $m(T)$ and radius $r$.
When $\vep \to 0$, the most likey path $g^T$ realizing the rare event 
$$
\Omega(p,Q,r) = \vset{ X^{\vep}_0 =p } \cap \vset{ \vnorm{X^{\vep}_{T} - Q}_{\mbb{R}^{d}} \leqs r }
$$ 
is then given by
$$
g^{T} (s) = m(s) + \rho (s,T) [\rho (T,T)^{-1} (q-m(T))] \qquad (0 \leqs s \leqs T).
$$
Call $\lambda_X$ the large deviations rate functional for the process $X^{\vep}_t$. One has then 
$$
\lambda_X (g^{T}) = \frac{1}{2} [\rho (T,T)^{-1} (q - m(T))] \cdot [q - m(T)],
$$
where $\cdot$ is the standard dot product in $\mbb{R}^{d}.$  

In the Banach space of continuous paths $C([0, T]),$ consider any open tube $\Tube(g^T, r)$ of center $g^T$ and small radius $r$. One has then the large deviations limit
\begin{equation} \label{limprobtube3}
\lim_{\vep \to 0} \; \frac {1}{\vep^2} \log \mbb{P} [ X^\vep \in \Tube(g^T, r)] = - \lambda_X (g^T). 
\end{equation}

With the notations introduced earlier, the most likely transition path $g^T$ is of the form $g^T(t) = m(t) + f_{M,r},$ where $M= Q- m(T)$. As proved earlier, as $r \to 0$, the minimizing path $f_{M,r}$ converges to $f_M$ in $E = C_{\mbold{0}} ([0,T])$ at linear speed $C r$. Recall that $f_M$ is the fully explicit path minimizing $\lambda$ over all the paths $f \in E$ such that $f(T) = M$. Hence, as $r \to 0$, the path $g^T$ will converge in uniform norm to $h^T$ given by 
\begin{equation}
\label{e:Xt-minimizer}
h^{T} (s) = m(s) + \rho (s,T) [\rho (T,T)^{-1} (Q-m(T))] \qquad (0 \leqs s \leqs T),
\end{equation}
and as $r \to 0$, the energy $\lambda_X (g^T)$ converges to
\begin{equation}
\label{e:Xt-minimizer-energy}
\lambda_X (h^{T}) = \frac{1}{2} [\rho (T,T)^{-1} (Q - m(T))] \cdot [Q - m(T)].
\end{equation}

Minimizing $\lambda_X (h^{T})$ over $T$ will hence produce the most likely time $\hat{T}$ of transition from $X^\vep_0 = p$ to a very small neighborhood of $Q$ at time $\hat{T}$, and the most likely transition path will be very close to $h^{\hat{T}}$.

Note that $Q = m(T)$ is a very different situation because $\mbb{P} [\Omega(p, m(T), r)]$ tends to $1$ as $\vep \to 0$. In that case, $X^{\vep}_{t}$ will remain within a fixed small tube around the mean path $m(t)$ with probability converging to $1$ as $\vep \to 0$.

\section{Numerical implementation}
\label{s:implementation}
For numerical computations, we naturally consider only the limit case $r=0$. We have implemented a numerical scheme in three steps:
\begin{itemize}
\item  Solve several delay ODEs to compute the mean path $m(t)$ of $X_{t}$ and the covariance function $\rho(s,t)$ of $Z_{t}$.
\item  For fixed $T, p, q,$ compute the most likely transition path $h^T$ from $X_0 =p$ to  $X_T =q$, and its energy  $\lambda_X(h^T)$, as given by~\eqref{e:Xt-minimizer} and \eqref{e:Xt-minimizer-energy}.
\item Compute the optimal transition time $T_{\text{opt}}$ by minimizing $\lambda_{X} (h^T)$ over all times $T>0.$
\end{itemize}

\begin{notation}
From this point forward, we write $\lambda$ for the rate functional associated with the process $X_{t}^{\vep}$.
\end{notation}

\subsection{Numerical solution of three delay ODEs }
Each delay ODE of interest here is iteratively solved on the time intervals $J_k = [(k-1)\tau, k\tau]$ for $k = 1,2, \ldots, (1 + \lfloor T/\tau \rfloor)$. For each $k$, this amounts to numerically solving a linear ODE with known right-hand side. For this, we use a backward Euler scheme, which is known to be stable for equations of this form~\cite{Bellen-Zennaro-2003, Guglielmi-1998}. 
To compute $m(t)$, we discretize $[0,T]$ into subintervals of equal length $\Delta t =\tau / N$. Backward Euler is given by
\begin{equation*}
m(t) - m(t-\Delta t) = [a + B m(t) + C m(t-\tau)]\Delta t,
\end{equation*}
which yields the recursive equation
\begin{equation*}
m(t) = (I - \Delta t B)^{-1}m(t-\Delta t) + \Delta t(I - \Delta tB)^{-1}[a + C m(t-\tau)].
\end{equation*}
The initial history of the mean is used to numerically compute  the solution $m(t) = m_1(t)$ on the initial interval $J_1$. To numerically generate the solution  $m(t) = m_k(t)$ on $J_k$, we proceed by iteration on $k$, using the discretized backward Euler expressions.
This yields a full numerical approximation of $m(t)$ on $[0,T].$ 

We apply a completely similar strategy to compute for each $s$ the function $t \mapsto \phi_s(t)$ defined by \eqref{eq:phi}. 
Here, both $s$ and $t$ will be constrained to belong to the finite grid 
\begin{equation*}
\Grid (N) =  \vset{ j \tau / N : j=1, \ldots, M \text{ and } M = N (1 + \lfloor T/\tau \rfloor ) }.
\end{equation*}
After the computation of $ \phi_s(t),$ we generate the $F(s,t)$ values for $s,t \in \Grid (N)$ by the explicit formula $F(s,t) = \phi_s(t) - \Sigma^{*} H(s-t)$, where $H(s-t)$ is a Heaviside function.

We then proceed to compute $\rho(s,t)$ for $s,t \in \Grid (N)$. For each fixed $t \in \Grid (N)$, the backward Euler discretization of the delay ODE verified by the function $s \mapsto \rho(s,t)$ yields the recursive relation
\begin{equation}
\label{e:rho-numerics}
\rho(s,t) = (I - \Delta s B)^{-1}\rho(s - \Delta s, t) + \Delta s(I - \Delta s B)^{-1}[C\rho(s - \tau, t) + \Sigma F(s,t)],
\end{equation}
where $\Delta s = \tau/N .$ The initial values $\rho(s,t) = 0$ for $s\in [-\tau, 0]$ and the recursive relation ~\eqref{e:rho-numerics} enable the computation of $\rho(s,t)$ for $s\in J_1.$ Keeping $t$ fixed, one then uses~\eqref{e:rho-numerics} and the values of $\rho(\cdot,t)$ on $J_k$ to compute the values of  $\rho( \cdot ,t)$ on $ J_{k+1}$. 
Repeating this operation for each $t \in \Grid (N)$ finally provides $\rho(s,t)$ for all $s,t \in \Grid (N)$.
\subsection{Numerical minimization of the Cramer transform}

Fix $T > 0$. For the Gaussian diffusion with delay  $X_{t}$, the most likely  transition path $h^{T}$ from $X_{0} = p$ to $X_{T} = q$ and its energy $\lambda (h^{T})$ have been explicitly expressed in terms of the functions $m(t)$ and $\rho(s,t)$ (see~\eqref{e:Xt-minimizer} and~\eqref{e:Xt-minimizer-energy}).
Plugging the values of $m(t)$ and $\rho(s,t)$ numerically computed for $s,t \in \Grid (N)$ into \eqref{e:Xt-minimizer} and \eqref{e:Xt-minimizer-energy} thus provides numerical approximations of $\lambda (h^{T})$ and $h^{T} (s)$ for $s \in \Grid (N)$.

To compute the most likely time at which $X_t$ will reach $q$, whenever   this rare event is realized, we have to minimize $\lambda (h^{T})$ in $T$. So we select a large terminal time $T_{\text{large}}$, and we numerically minimize the function $\lambda (h^{T})$ on the interval $[0,T_{\text{large}}].$ If on that time interval $\lambda (h^{T})$ exhibits an actual minimum at $T_{\text{opt}}$, this gives us the (numerically approximate) most likely transition time $T_{\text{opt}}$. Otherwise, we set $T_{\text{opt}} = \infty.$
\subsection{Exit path from metastable stationary states}
As $\vep \to 0$, the dynamics of $X_t$ limit to a deterministic dynamical system $x_t$ driven by a first-order delay ODE. Let $p$ be a stable stationary state of $x_t$, and let $V$ be a small neighborhood of $p$.  Determining, for small $\vep$, the most likely  path followed by $X_t$ to exit from $V$ when $X_0 = p$ is a problem of practical interest in many contexts. 
Our numerical computation of the most likely transition path from $X_{0} = p$ to $X_{T} = q$  with $q$ on the boundary of $V$ will enable us  to numerically solve these types of    exit problems.
We now illustrate this approach with the detailed study of a specific dynamical system from biochemistry.
\section{Linear noise approximations for excitable systems}
\label{s:LNAs}
\subsection{Excitable systems from biochemistry}
We begin by explaining the importance of noise, delay, and metastability for the dynamics of genetic regulatory circuits. Such circuits may be described by delay SDEs~\cite{Brett-Galla-PRL-2013, Gupta_jcp2014} and represent a significant class of systems to which our approach can be applied.

Cellular noise and transcriptional delay shape the dynamics of genetic regulatory circuits.
Stochasticity within cellular processes arises from a variety of sources.
Sequences of chemical reactions at low molecule numbers produce an intrinsic form of noise.
Multiple other types of variability affect dynamics across spatial and temporal scales.
Examples include fluctuations in environmental conditions, metabolic processes, energy availability, et cetera.
Noise functions constructively in both microbial and eukaryotic cells and on multiple timescales.
It enables probabilistic differentiation strategies for cell populations, such as stochastic state-switching in bistable circuits and transient cellular differentiation in excitable circuits (\textit{e.g.}~\cite{Eldar-Elowitz-Functional-2010, Suel_2006, Davidson-Surette-Individuality-2008}).

Certain circuit architectures such as toggle switches and excitable circuits enable noise-induced rare events.
These architectures allow cellular populations to probabilistically switch states in response to environmental fluctuations~\cite{Eldar-Elowitz-Functional-2010}.

Bistability is a central characteristic of biological switches.
It is essential in the determination of cell fate in multicellular organisms~\cite{hong:2012}, the regulation of cell cycle oscillations during mitosis~\cite{he:2011}, and the maintenance of epigenetic traits in microbes~\cite{ozbudak:2004}.
Metastable states can be created by positive feedback loops.
Once a trajectory enters a metastable state, it will typically remain there for a considerable amount of time before noise induces a transition~\cite{kepler:2001, Eldar-Elowitz-Functional-2010}.
This phenomenon has been studied in many contexts, including the lysis/lysogeny switch of bacteriophage $\lambda $~\cite{aurell:2002, warren:2005}, bacterial persistence~\cite{balaban:2004}, and synthetically constructed positive feedback loops~\cite{gardner:2000, nevozhay:2012}.

Many biological systems exhibit excitability~\cite{Meeks_2002,Dupin29042003,Suel_2006}.
Excitable systems commonly feature a single metastable state bordered by a sizable, active region of phase space.
When stochastic fluctuations cause a trajectory to exit the basin of attraction of this metastable state, the trajectory will make a large excursion before returning to the basin.
Transient differentiation into a genetically competent state in \textit{Bacillus subtilis}, for example, is enabled by an excitable circuit architecture.
Positive feedback controls the threshold for competent event initiation, while a slower negative feedback loop controls the duration of competence events~\cite{Suel_2006, Maamar-2005, Maamar-2007, Suel-2007, Cagatay-2009}.
Rare events in such excitable systems manifest as bursts of activity.

\subsection{General linear noise approximations (LNAs)}
\label{ss:General-LNAs}

We explain how Gaussian diffusions driven by delay SDEs such as~\eqref{e:dSDE-linear} arise from linear noise approximations of nonlinear delay SDEs.
Brett and Galla~\cite{Brett-Galla-PRL-2013} introduced linear noise approximations for chemical Langevin equations modeling biochemical reaction networks.
Consider the delay SDE
\begin{equation}
\label{e:dSDE-nonlinear}
\mrm{d} x_{t} = f(x(t),x(t - \tau )) \, \mrm{d}t + \frac{1}{\sqrt{N }} g(x(t),x(t - \tau )) \, \mrm{d} W_{t}.
\end{equation}
Here $f : \mbb{R}^{d} \times \mbb{R}^{d} \to \mbb{R}^{d}$, $g : \mbb{R}^{d} \times \mbb{R}^{d} \to \mbb{R}^{d \times n}$, $W_{t}$ denotes standard $n$-dimensional Brownian motion, and $N > 0$ denotes system size (characteristic number of molecules in a biochemical system).
Notice that we allow both the drift and the diffusion to depend on the past.
Suppose $x^{\infty } (t)$ solves the deterministic limit of~\eqref{e:dSDE-nonlinear}; that is, $x^{\infty } (t)$ solves
\begin{equation}
\label{e:elway}
\mrm{d} x_{t} = f(x(t),x(t - \tau )) \, \mrm{d}t.
\end{equation}
As we have indicated in our introduction, around a stable point $z$ of the limit ODE as $N$ tends to infinity, one can approximate such a system by a Gaussian diffusion with delay and small diffusion matrix $\frac{1}{\sqrt{N}} \Sigma$.
Define $\xi (t)$ by
\begin{equation*}
x(t) = x^{\infty } (t) + \xi (t).
\end{equation*}
Substituting this ansatz into~\eqref{e:dSDE-nonlinear} and performing Taylor expansions of $f$ and $g$ based at the deterministic trajectory yields the linear noise approximation
\begin{equation}
\label{e:lna-general}
\begin{split}
\mrm{d} \xi_{t} &= \left[ D_{1} f (x^{\infty } (t), x^{\infty } (t - \tau )) \xi (t) + D_{2} f (x^{\infty } (t), x^{\infty } (t - \tau )) \xi (t - \tau ) \right] \mrm{d} t
\\
&\quad {}+ \frac{1}{\sqrt{N}} g (x^{\infty } (t), x^{\infty } (t - \tau )) \, \mrm{d} W_{t}.
\end{split}
\end{equation}
Here $D_{1}$ and $D_{2}$ denote differentiation with respect to the first and second sets of $d$ arguments, respectively.
If $x^{\infty } (t)$ happens to be a stable fixed point of~\eqref{e:elway}, say $x^{\infty } (t) \equiv z$, then~\eqref{e:lna-general} becomes
\begin{equation*}
\mrm{d} \xi_{t} = \left[ D_{1} f (z, z) \xi (t) + D_{2} f (z, z) \xi (t - \tau ) \right] \mrm{d} t + \frac{1}{\sqrt{N}} g(z, z) \, \mrm{d} W_{t}.
\end{equation*}
This is~\eqref{e:dSDE-linear} with $a = \mbold{0}$, $ B= D_{1} f (z, z)$, $C = D_{2} f (z, z)$, $\Sigma = g(z,z)$, and $\varepsilon = \frac{1}{\sqrt{N}}.$

\section{A bistable biochemical system}
\label{s:toggle}

\subsection{Chemical Langevin equation}

The genetic toggle switch we study consists of two protein species, each of which represses the production of the other.
We model the switch using the chemical Langevin equation
\begin{subequations}
\label{e:dCLE}
\begin{align}
\mrm{d} x &= \left( \frac{\beta }{1 + y(t - \tau )^{2} / k} - \gamma x \right) \mrm{d}t + \frac{1}{\sqrt{N }} \left( \frac{\beta }{1 + y(t - \tau )^{2} / k} + \gamma x \right)^{\tfrac{1}{2}} \mrm{d} W_{1}
\\
\mrm{d} y &= \left( \frac{\beta }{1 + x(t - \tau )^{2} / k} - \gamma y \right) \mrm{d}t + \frac{1}{\sqrt{N }} \left( \frac{\beta }{1 + x(t - \tau )^{2} / k} + \gamma y \right)^{\tfrac{1}{2}} \mrm{d} W_{2},
\end{align}
\end{subequations}
where $x$ and $y$ denote the concentrations of the two protein species, $\beta $ denotes maximal protein production rate, $k$ is the protein level at which production is cut in half, $\gamma $ is the dilution rate, $N$ denotes system size, and $W_{1}$ and $W_{2}$ are independent standard Brownian motions.
Notice that~\eqref{e:dCLE} is a symmetric system.
In the deterministic limit as $N \to \infty$, the co-repressive toggle switch is described by the reaction rate equations
\begin{subequations}
\label{e:dRRE}
\begin{align}
\mrm{d} x &= \left( \frac{\beta }{1 + y(t - \tau )^{2} / k} - \gamma x \right) \mrm{d}t
\\
\mrm{d} y &= \left( \frac{\beta }{1 + x(t - \tau )^{2} / k} - \gamma y \right) \mrm{d}t.
\end{align}
\end{subequations}
System~\eqref{e:dRRE} has two stable stationary states, $(x_{\text{low}}, y_{\text{high}})$ and $(x_{\text{high}}, y_{\text{low}})$, as well as a saddle stationary state $(x_{s}, y_{s})$.
See~\cite[Figure~7]{Gupta_jcp2014} or~\cite[Figure~3A, inset]{Cuba-cell-cycle-noise-2016} for a phase portrait of~\eqref{e:dRRE}. 

In the stochastic ($N < \infty $) regime, a typical trajectory of the co-repressive toggle switch will spend most of its time near the metastable states, occasionally hopping from one to the other~\cite[Figure~3A]{Cuba-cell-cycle-noise-2016}.
Such rare events raise interesting questions.
For large $N$, is the co-repressive toggle switch well-approximated by a two-state Markov chain on long timescales?
If so, what are the transition rates?
To determine these rates, one would need to compute both a quasipotential and a formula of Eyring-Kramers type.

Here, we focus on the problem of optimal escape from neighborhoods of metastable states.
We fix a neighborhood of $(x_{\text{low}}, y_{\text{high}})$ (Figure~\ref{fig:ExampleExits}, black curve) and ask:
What is the most likely route of escape from this neighborhood for~\eqref{e:dCLE}?
In Section~\ref{ss:toggle-LNA}, we compute a linear noise approximation of~\eqref{e:dCLE} that is valid near $(x_{\text{low}}, y_{\text{high}})$.
Since this linear noise approximation is a Gaussian diffusion with delay, the framework of the present paper applies to it.
We use this framework to compute most likely routes of escape for the linear noise approximation and thereby obtain (approximate) most likely routes of escape for~\eqref{e:dCLE}.

\begin{figure}[ht]
\begin{center}
\includegraphics[scale = 0.5]{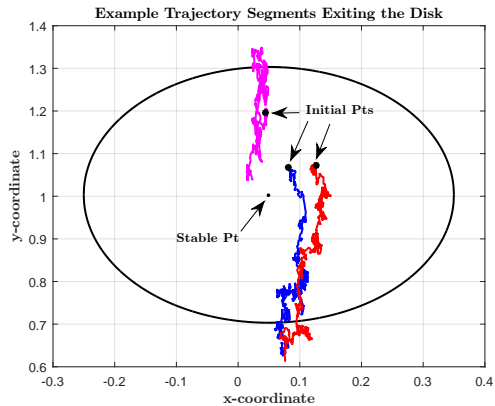}
\caption{Sample trajectory segments of~\eqref{e:dCLE} in a neighborhood of the metastable state $(x_{\text{low}}, y_{\text{high}})$. We simulated $1000$ trajectories over the time interval $[0,5]$. We then chose three sample trajectories that exited the disk $D$ and extracted a segment from each of them. The blue, red, and magenta trajectory segments begin near the metastable state (small black disk) at the coordinates $(0.0817, 1.0668)$, $(0.0673, 1.1233)$, and $(0.1272, 1.0733)$, respectively, and cover time intervals $[2.799, 3.399]$, $[3.099, 3.599]$, and $[1.699, 2.299]$. The history of each simulated trajectory over the time interval $[0,5]$ is taken to be fixed at $(x_{\text{low}}, y_{\text{high}})$ over the time interval $[-\tau , 0]$.
Trajectories have been generated using Euler-Maruyama with time step $\Delta t = \tau / 1000 = 0.001$.
Parameters: $\beta = 0.73$, $k = 0.05$, $\gamma = \ln (2)$, $\tau = 1$, $N=30$.}
\label{fig:ExampleExits}
\end{center}
\end{figure}

\subsection{Approximation by Gaussian Diffusions with delays}
\label{ss:toggle-LNA}

We study an approximation of~\eqref{e:dCLE} by Gaussian diffusions with delay that is valid in a neighborhood of $(x_{\text{low}}, y_{\text{high}}) \defasr (v,w)$.
Writing
\begin{equation*}
x(t) = v + \xi_{1} (t), \qquad y(t) = w + \xi_{2} (t),
\end{equation*}
the Gaussian diffusion with delay is given by
\begin{equation}
\label{e:LNA}
\begin{aligned}
\mrm{d} \xi_{1}(t) &= \left(-\gamma \xi_{1}(t) - \frac{2 \beta w}{k[1 + w^2/k]^2} \xi_{2}(t - \tau)\right)\mrm{d} t + \frac{1}{\sqrt{N}} \left(\frac{\beta}{1 + w^2/k} + \gamma v\right)^{1/2} \mrm{d} W_{1}(t),\\
\mrm{d} \xi_{2}(t) &= \left(-\gamma \xi_{2}(t) - \frac{2 \beta v}{k[1 + v^2/k]^2} \xi_{1}(t - \tau)\right)\mrm{d} t + \frac{1}{\sqrt{N}} \left(\frac{\beta}{1 + v^2/k} + \gamma w\right)^{1/2} \mrm{d} W_{2}(t).
\end{aligned}
\end{equation}

We are now in position to apply the large deviations framework of our  paper to~\eqref{e:LNA}.
Before doing so, we perform a preliminary numerical calculation and comment on the role of trajectory histories.

We numerically compute the stationary points of~\eqref{e:dRRE}. 
We work with the parameter set $\beta = 0.73$, $k = 0.05$, $\gamma = \ln (2)$, and $\tau = 1$, a parameter set for which~\eqref{e:dRRE} has two stable stationary states and one saddle stationary state. 
We find these states by setting the drift expressions in~\eqref{e:dRRE} equal to zero along with $x(t - \tau) = x$ and $y(t - \tau) = y$. Approximate solutions can be found numerically using many well-known iterative methods. The two stable stationary states are approximately $(v,w) \approx (0.0498, 1.0033)$ and $(1.0033, 0.0498)$. The stationary saddle is approximately $(0.3306, 0.3306)$.

Notice that since the Gaussian diffusion~\eqref{e:LNA} contains delay, one must specify a trajectory history over the time interval $[-\tau , 0]$ in order to properly initialize the equation.
Trajectory history will influence the evolution of the mean of the Gaussian diffusion with delay and will therefore affect the computation of optimal large deviations trajectories.
In general, this history may be deterministic or random.
For our current study, we work with deterministic histories and take them to be constant on $[-\tau , 0]$.
See Figure~\ref{fig:Mean} for examples of the evolution of the mean of the Gaussian diffusion using various histories.
Finally, note that although the process $\xi (t)$ is Gaussian, it will not be centered if the history is not identically zero.
To be consistent with the notation of Section~\ref{ss:path-theory}, we write the process that locally approximates the delay chemical Langevin equation as $m(t) + \vep Z_{t}$, where $m(t) = \mbb{E} [\xi (t)]$, $\vep = \frac{1}{\sqrt{N}}$, and $Z_{t}$ satisfies~\eqref{e:LNA} with no small parameter ($N=1$) and history zero.

\subsection{Optimal escape trajectories and exit points - analysis}

We now apply our large deviations framework to the Gaussian diffusion that approximates the delay chemical Langevin equation~\eqref{e:dCLE} near $(v,w)$.
We begin with an analytical view and then follow with numerical simulation.

We find the most likely exit  path with constant initial history $m(0)$ that exits the disk
\begin{equation*}
D = \vset{ (z_{1}, z_{2} ) : (z_{1} - v)^{2} + (z_{2} - w)^{2} \leqs R^{2}}.
\end{equation*}
(We choose $R = 0.3$ for the numerical computations in Section~\ref{ss:Results} so that the neighborhood of $(v,w)$ has size of order one but remains bounded away from the separatrix.)
To find this optimal path, we first find the path of least energy that exits $D$ at a preselected point $q \in \partial D$ and at a preselected time $T$.
We then optimize over $T$ and $q$.
For fixed exit time $T$ and exit point $q \in \partial D$, the optimal escape path and associated energy are given by
\begin{align*}
h(s) &= \rho(s,T)\left[\rho(T,T)^{-1}(q - m(T))\right] + m(s)\\
\lambda_{h}(T, q) &= \frac{1}{2} \left[\rho(T,T)^{-1}(q - m(T))\right] \cdot (q - m(T))
\end{align*}
using~\eqref{e:Xt-minimizer} and~\eqref{e:Xt-minimizer-energy}.
Here, $s$ ranges over $[0,T]$ and $\rho(s,t)$ is the covariance matrix of $Z_{t}$ at times $s,t\in [0,T].$
Note that we are using the terms ``exit time'' and ``escape path'' loosely since we do not impose the {\itshape a priori} condition that $h$ remain inside $D$ until it reaches $q$ at time $T$.

In order to optimize over $q$ and $T$, we first fix $T$ and optimize $\lambda_h(T, q)$ over points $q\in \partial D.$ 
Notice that $\lambda_h(T, q)$ is a classical quadratic form on $\mbb{R}^2$ for fixed $T$, so we apply standard minimization techniques to find the minimizer $\hat{q} (T)$ analytically.
The minimization problem for fixed $T$ is 
\begin{equation}\label{e:MinLNA}
\min_{q} \lambda_{h} (T, q) \;\;\; \mbox{ subject to }\;\;\; (q_1 - v)^2 + (q_2 - w)^2 = R^{2}.
\end{equation}

Using a Lagrange multiplier $\mu\in \mbb{R}$, define the Lagrangian 
\begin{equation*}
L_{\mu}(q) \defas \lambda_{h} (T, q) - \mu ((q_1 - v)^2 + (q_2 - w)^2 - R^{2}).
\end{equation*}
Calculating the gradient $\nabla_{q}(L_{\mu}(q))$ and setting the gradient equal to zero yields the equation
\begin{equation}\label{e:MinEquation}
\rho(T,T)^{-1}(q - m(T)) = 2 \mu (q - (v,w)^{*}).
\end{equation}

Notice that if $m(T) = (v,w)^{*}$, then~\eqref{e:MinEquation} becomes an eigenvalue problem for $\rho (T,T)^{-1}$.
In this case, the optimal exit point $\hat{q} (T)$ is such that $\hat{q} (T) - (v,w)^{*}$ is the eigenvector of $\rho (T,T)^{-1}$ corresponding to the smallest eigenvalue, and the energy of the optimal path that exits $D$ at time $T$ is proportional to this smallest eigenvalue.

This observation has two implications.
First, if the history of the linear noise process is taken to be $m(t) = (v,w)^{*}$ on $[-\tau , 0]$, then we will have $m(t) = (v,w)^{*}$ for all $t \geqs 0$ as well.
In this case, minimizing $\lambda_{h}(T, q)$ over $T$ and $q$ to find the optimal escape time $T_{\text{opt}}$ and the optimal escape point $\hat{q} (T_{\text{opt}})$ amounts to minimizing the smallest eigenvalue of $\rho (T,T)^{-1}$ over $T$.
Since~\eqref{e:LNA} is essentially an Ornstein-Uhlenbeck process with delay, we expect the smallest eigenvalue of $\rho (T,T)^{-1}$ to decrease monotonically toward a limiting value as $T \to \infty $.
See Figure~\ref{fig:Eigenvalues} for numerical evidence.
There exists no minimizer of $\lambda_{h}(T, q)$ in this case, as we would have $T_{\text{opt}} = \infty $.

Second, regardless of the initial history of the linear noise process, $m(T) \to (v,w)^{*}$ as $T \to \infty $ for the parameters we have selected.
Consequently,~\eqref{e:MinEquation} is approximately an eigenvalue problem for large values of $T$, so for such $T$ the optimal exit point $\hat{q} (T)$ will be such that $\hat{q} (T) - (v,w)^{*}$ is close to the eigenvector of $\rho (T,T)^{-1}$ corresponding to the smallest eigenvalue.

\subsection{Numerical results} 
\label{ss:Results}

We compute the optimal path of escape, the optimal exit time $T_{\text{opt}}$, and the optimal exit point $\hat{q} (T_{\text{opt}}) \in \partial D$ for the linear noise process~\eqref{e:LNA} that approximates the toggle switch~\eqref{e:dCLE} in the disk $D$.
Along the way, we discuss interesting related computations.

{\bfseries Parameter selection.}
We set $\beta = 0.73$, $k = 0.05$, $\gamma = \ln (2)$, and $\tau = 1$ for the toggle switch.
System size for the linear noise approximation~\eqref{e:LNA} is $N = 1000$.
The history of the linear noise process is taken to be the constant position $(0.0453, 1.1323)$ over the time interval $[-\tau , 0]$.
We choose $R = 0.3$ for the radius of $D$ so that this neighborhood of $(v,w)$ has size of order one but remains bounded away from the separatrix.

{\bfseries Optimization algorithm.}
To compute the optimal escape path, exit time, and exit point, we execute the following algorithm.
\begin{itemize}[leftmargin=*, labelindent=\parindent, itemsep=0.5ex]
\item Simulate the mean and covariance equations for a sufficiently large $T_{\text{large}}$ using step sizes $\Delta t = \Delta s = \tau / 500$.
\item Discretize the boundary of the disk $D$ using discretization $\Delta r = 0.006$ of $[-R,R]$.
\item For each time $t_{j} = (j-1) \Delta t \in [0,T_{\text{large} }]$ and each point $q_{k}$ on the discretized boundary of the disk, compute the optimal trajectory that exits at time $t_{j}$ through $q_{k}$ as well as the energy $E_{j,k}$ of this trajectory.
\item Minimize over the entries of the matrix $E$ in order to find the optimal exit time and exit point (and hence the overall optimal path of escape).
\end{itemize}

{\bfseries Mean and covariance.}
We first compute the mean and covariance of the linear noise process.
Figure~\ref{fig:Mean} (blue curves) illustrates the evolution of the mean for our parameter set.
As expected, the mean converges to the stationary state $(v,w)$ (moved to $(0,0)$ in Figure~\ref{fig:Mean}).
It is important to choose $T_{\text{large}}$ sufficiently large so that the covariance matrix $\rho (T_{\text{large}}, T_{\text{large}})$ has stabilized and the mean is close to the stationary state.
Fig.~\ref{fig:Variance} and Fig.~\ref{fig:Eigenvalues} provide evidence that this stabilization occurs by time $T = 20$ for our parameter set.
In particular, the variances of the two components of $Z_{t}$ stabilize by time $20$ (Fig.~\ref{fig:Variance}).
Fig.~\ref{fig:Eigenvalues} illustrates that the smallest eigenvalue of $\rho (t,t)^{-1}$ stabilizes as well.

\begin{figure}[ht]
\centering
\begin{subfigure}{.45\textwidth}
\centering
\includegraphics[scale = 0.5]{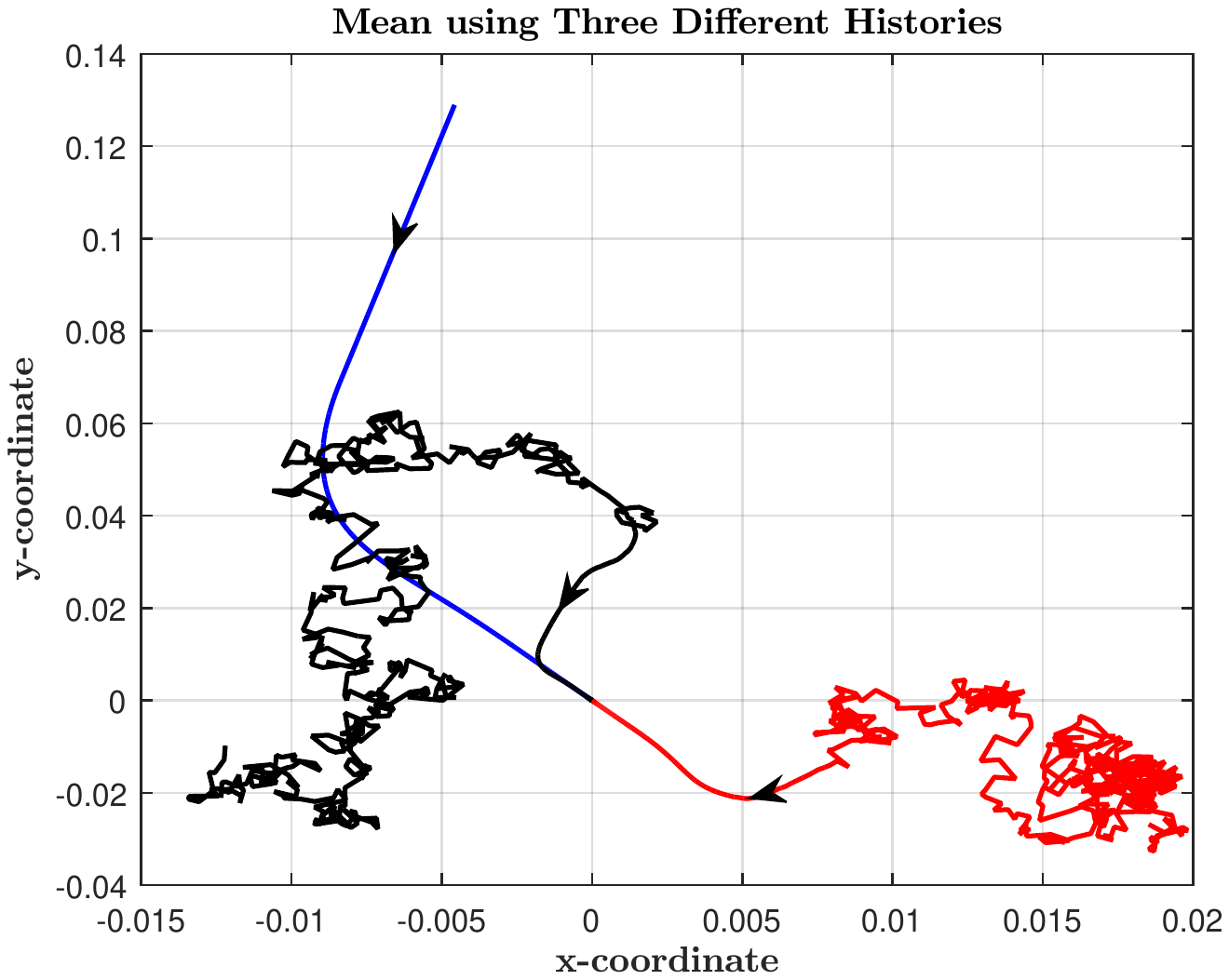}
\caption{}
\label{fig:Example-Means}
\end{subfigure}%
\begin{subfigure}{.45\textwidth}
\centering
\includegraphics[scale = 0.5]{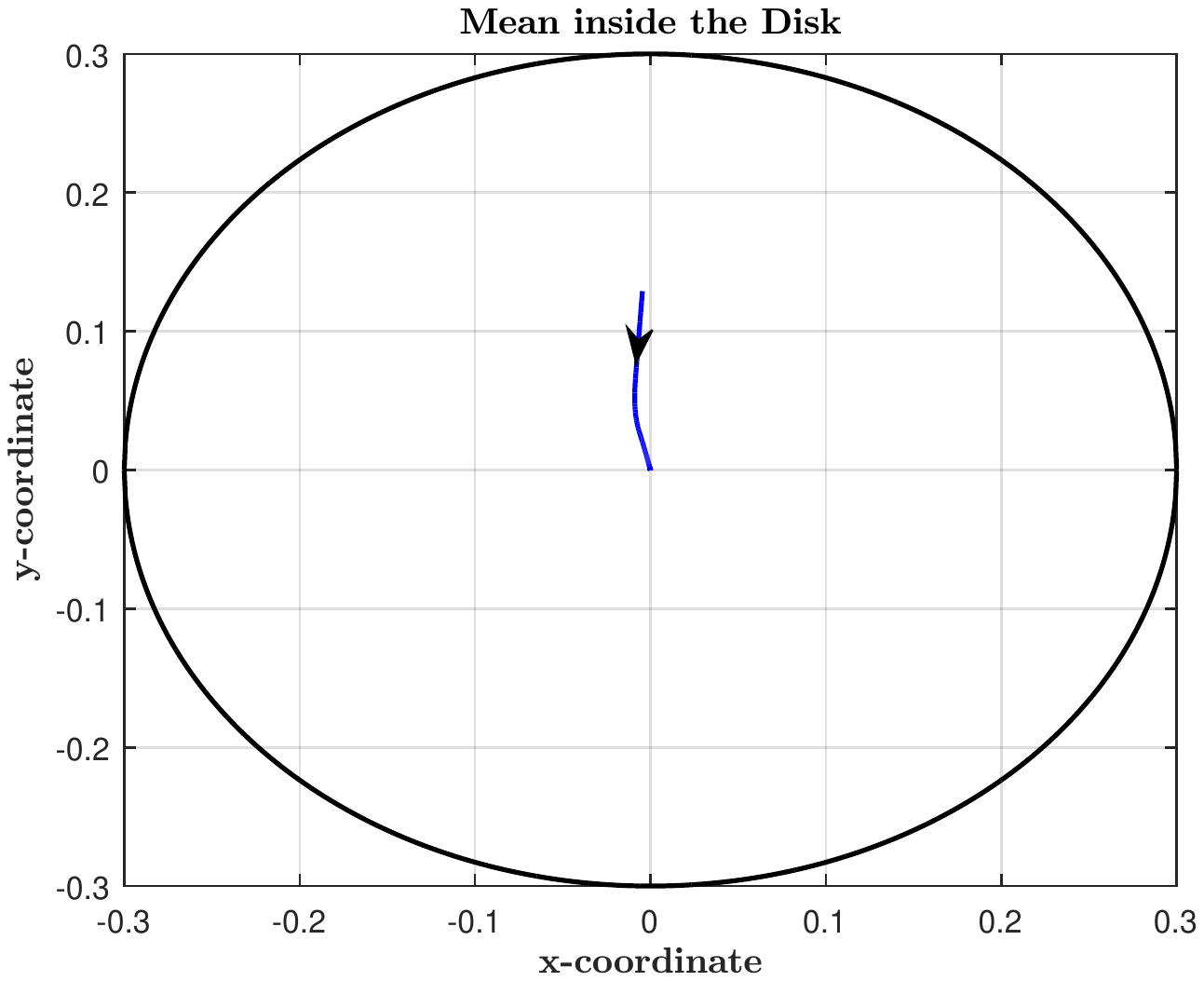}
\caption{}
\label{fig:Mean-Disk}
\end{subfigure}%
\caption{Evolution of the mean of the linear noise process.
Here the stationary state $(v,w)$ has been shifted to the origin.
{\bfseries (\ref{fig:Example-Means})}
Blue curve: evolution of the mean using the constant history $(0.0453, 1.1323)$ (or $(-0.0046, 0.1289)$ in local coordinates).
Red and black curves: evolution of the mean using trajectory segments of~\eqref{e:dCLE} for histories.
In all three cases, the mean converges to the stationary state.
{\bfseries (\ref{fig:Mean-Disk})}
Another view of the blue curve from Fig.~\ref{fig:Example-Means}.}
\label{fig:Mean}
\end{figure}

\begin{figure}[ht]
\centering
\begin{subfigure}{.45\textwidth}
\centering
\includegraphics[scale = 0.5]{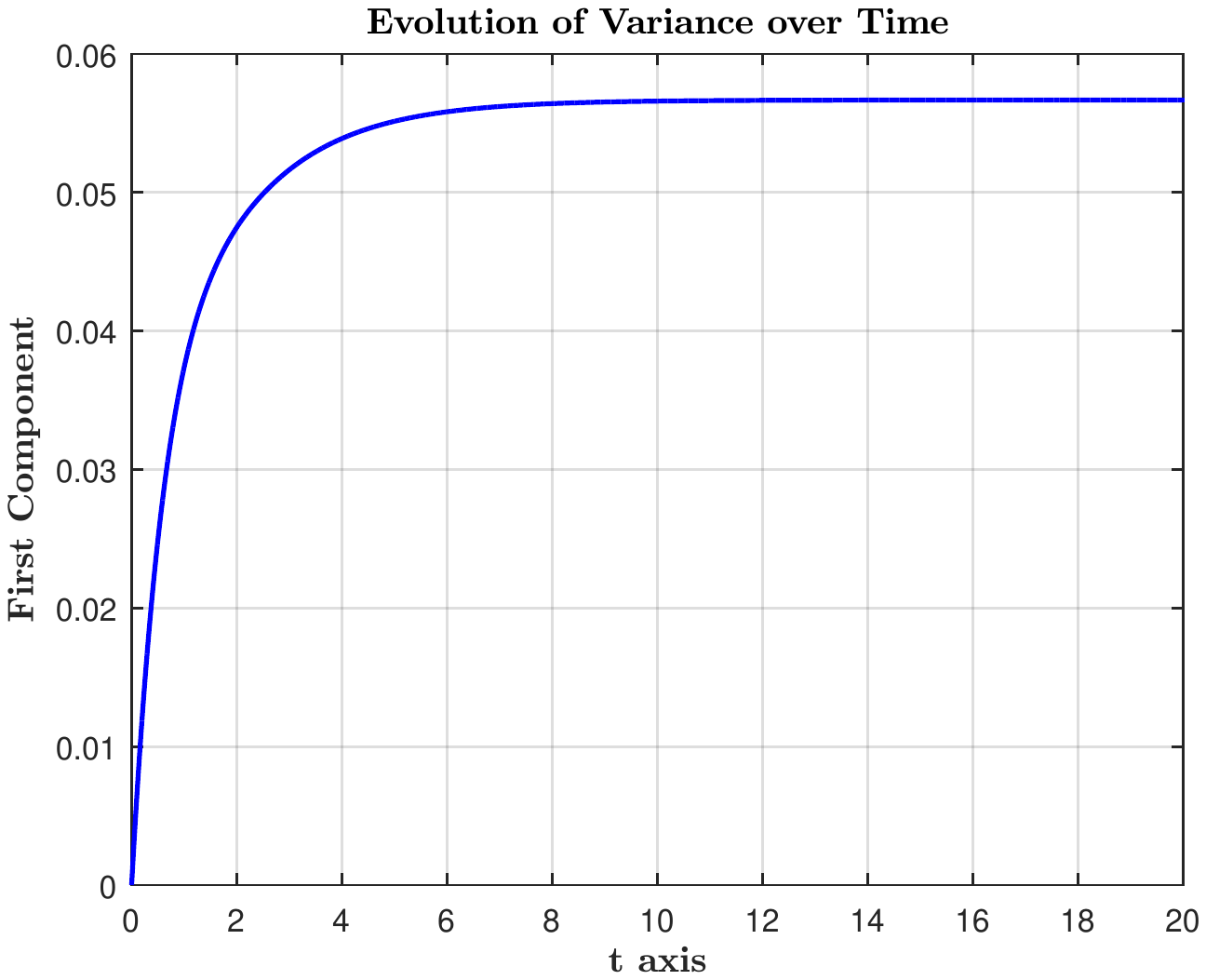}
\caption{}
\label{fig:Variance-time}
\end{subfigure}%
\begin{subfigure}{.45\textwidth}
\centering
\includegraphics[scale = 0.5]{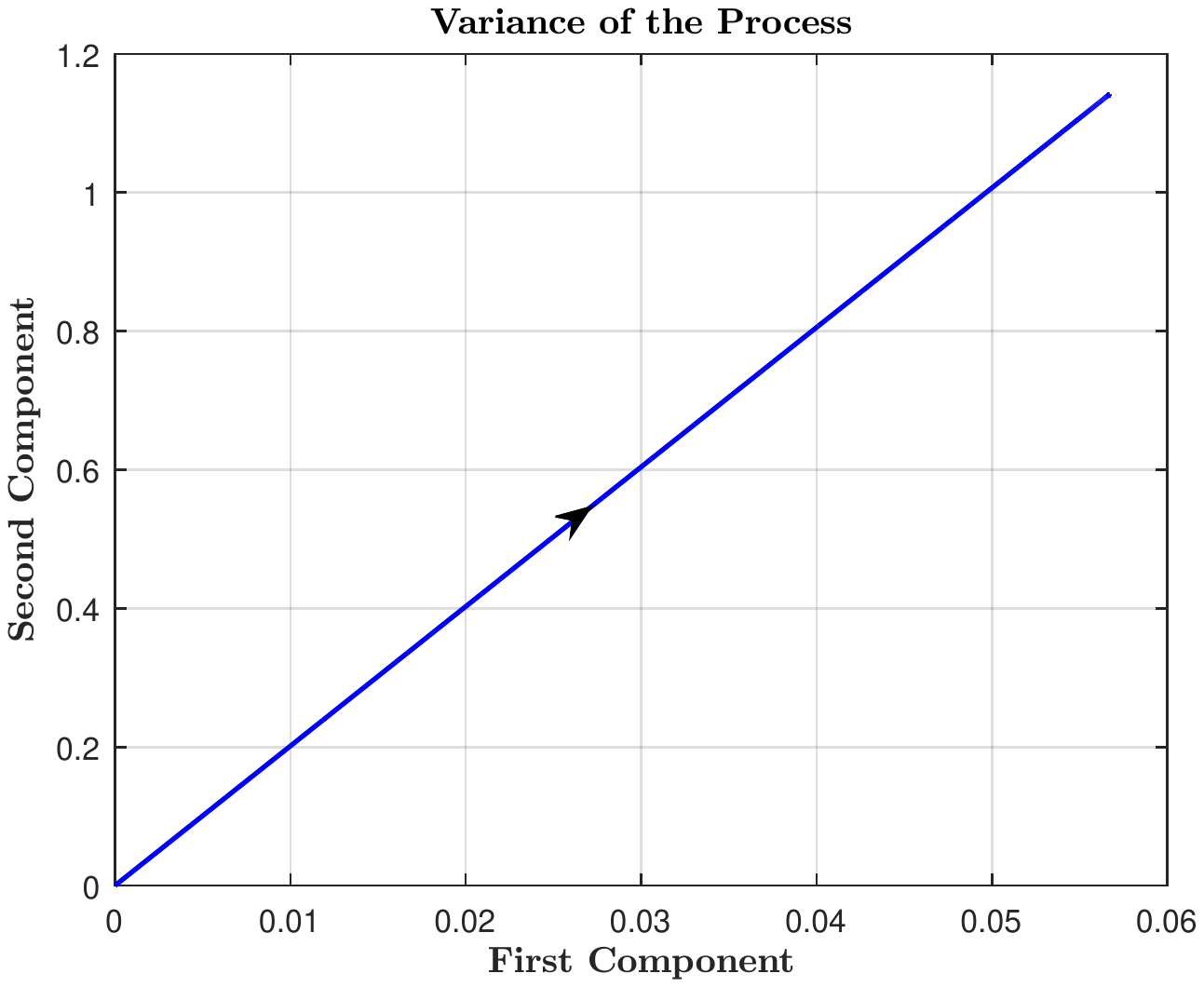}
\caption{}
\label{fig:Variance-comp}
\end{subfigure}%
\caption{Evolution of the variances of the two components of $Z_{t}$ over time.
{\bfseries (\ref{fig:Variance-time})}
The variance of the first component stabilizes to approximately $0.0567$ by time $20$.
{\bfseries (\ref{fig:Variance-comp})}
A linear relationship exists between the evolutions of variances of the first and second components.
By time $20$, the variance of the second component has stabilized to approximately $1.1409$.}
\label{fig:Variance}
\end{figure}

\begin{figure}[ht]
\centering
\includegraphics[scale = 0.5]{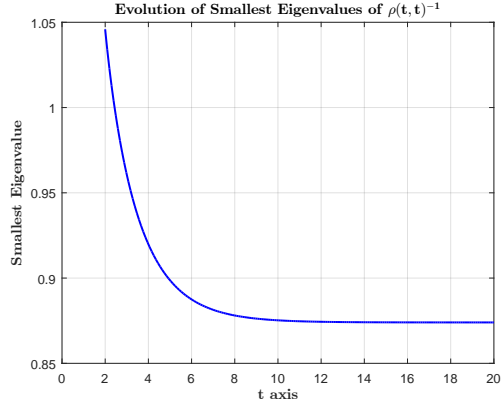}
\caption{The smallest eigenvalue of the covariance matrix $\rho(t,t)^{-1}$ stabilizes to approximately $0.874$ by time $20$.} 
\label{fig:Eigenvalues}
\end{figure}

{\bfseries Numerical optimization results.}
We first examine the behavior of optimal paths and optimal path energies for fixed exit times.
Fig.~\ref{fig:EnergyFuncQ} illustrates the behavior of optimal path energy as a function of exit point over the upper half of $\partial D$ for the fixed exit time $T = 10$.
Note that optimal path energy is minimized near the top of $\partial D$.
Fig.~\ref{fig:OptTrajs} depicts three different optimal escape paths for fixed escape time $T = 20$ and three different exit points.
Notice that these trajectories follow the mean for some time before breaking away toward their respective exits.
This behavior should not occur for the optimal exit time $T_{\text{opt}}$ and the optimal exit point $\hat{q} (T_{\text{opt}})$.
Fig.~\ref{fig:BestTraj} (blue curve) illustrates the overall optimal escape trajectory.
This trajectory exits at time $T_{\text{opt}} = 1.482$ and exit point $\hat{q} (T_{\text{opt}}) = (0.0384, 1.3031)$.
Observe that the overall optimal escape trajectory diverges from the mean immediately.

Fig.~\ref{fig:ExOptTrajs} depicts overall optimal escape trajectories using three different constant initial histories.
Notice that if the initial history is located in the lower half of $D$, then the overall optimal escape trajectory exits through the lower half of $\partial D$.
This happens for the upper half of $D$ as well.
This behavior is natural, since moving `across' the stationary state should not be energetically optimal.
For the initial history corresponding to the blue curve in Fig.~\ref{fig:ExOptTrajs}, the optimal escape path that exits through the bottom half of $\partial D$ does so through $(0.0162, -0.2996)$ (in local coordinates) at exit time $\infty $ with energy $0.0394$.
This energy is strictly larger than that of the blue curve in Fig.~\ref{fig:ExOptTrajs} ($0.0348$).

\newpage

\begin{figure}[ht]
\centering
\begin{subfigure}{.45\textwidth}
\centering
\includegraphics[scale = 0.5]{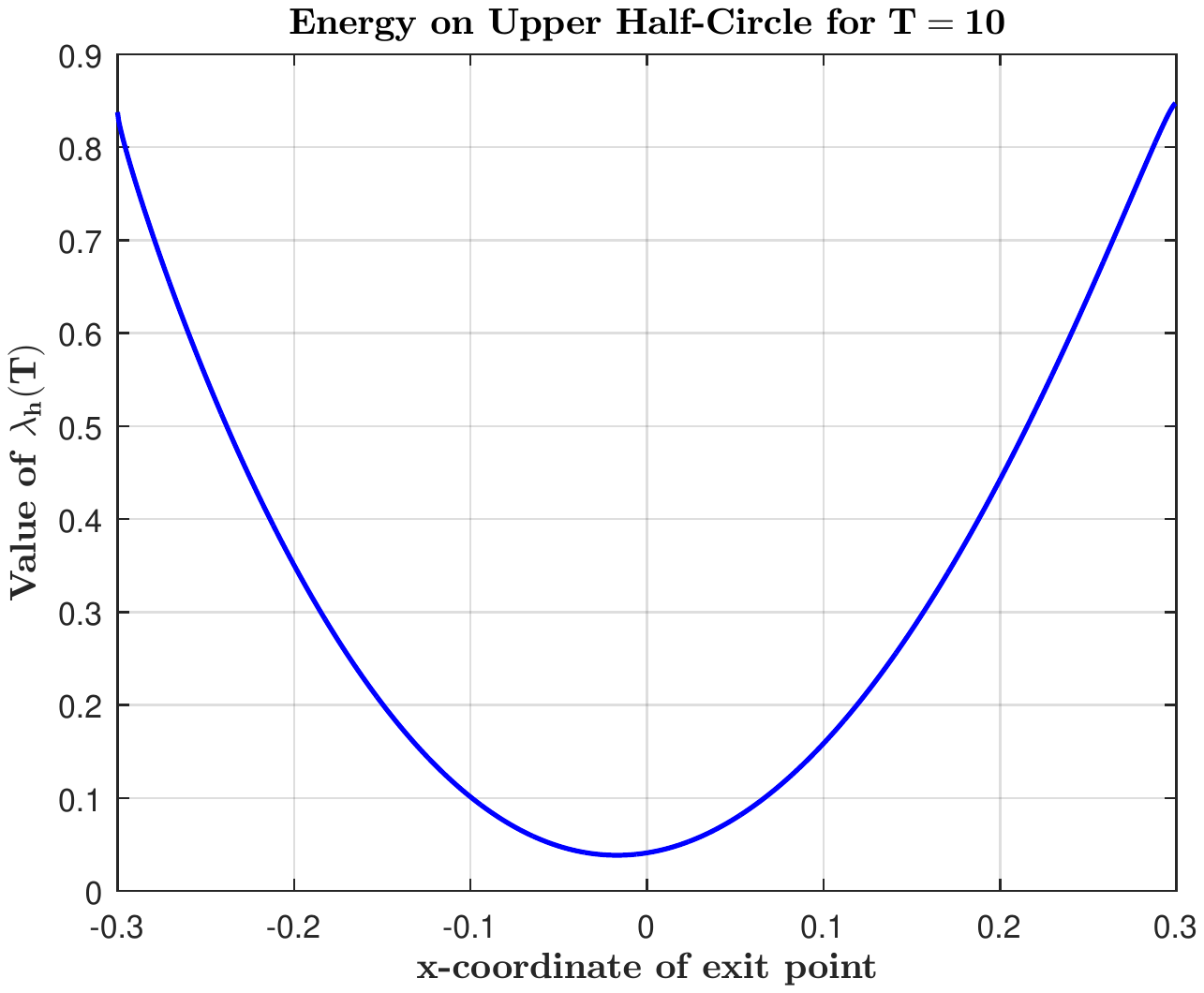}
\caption{}

\label{fig:EnergyFuncQ}
\end{subfigure}%
\begin{subfigure}{.45\textwidth}
\centering
\includegraphics[scale = 0.5]{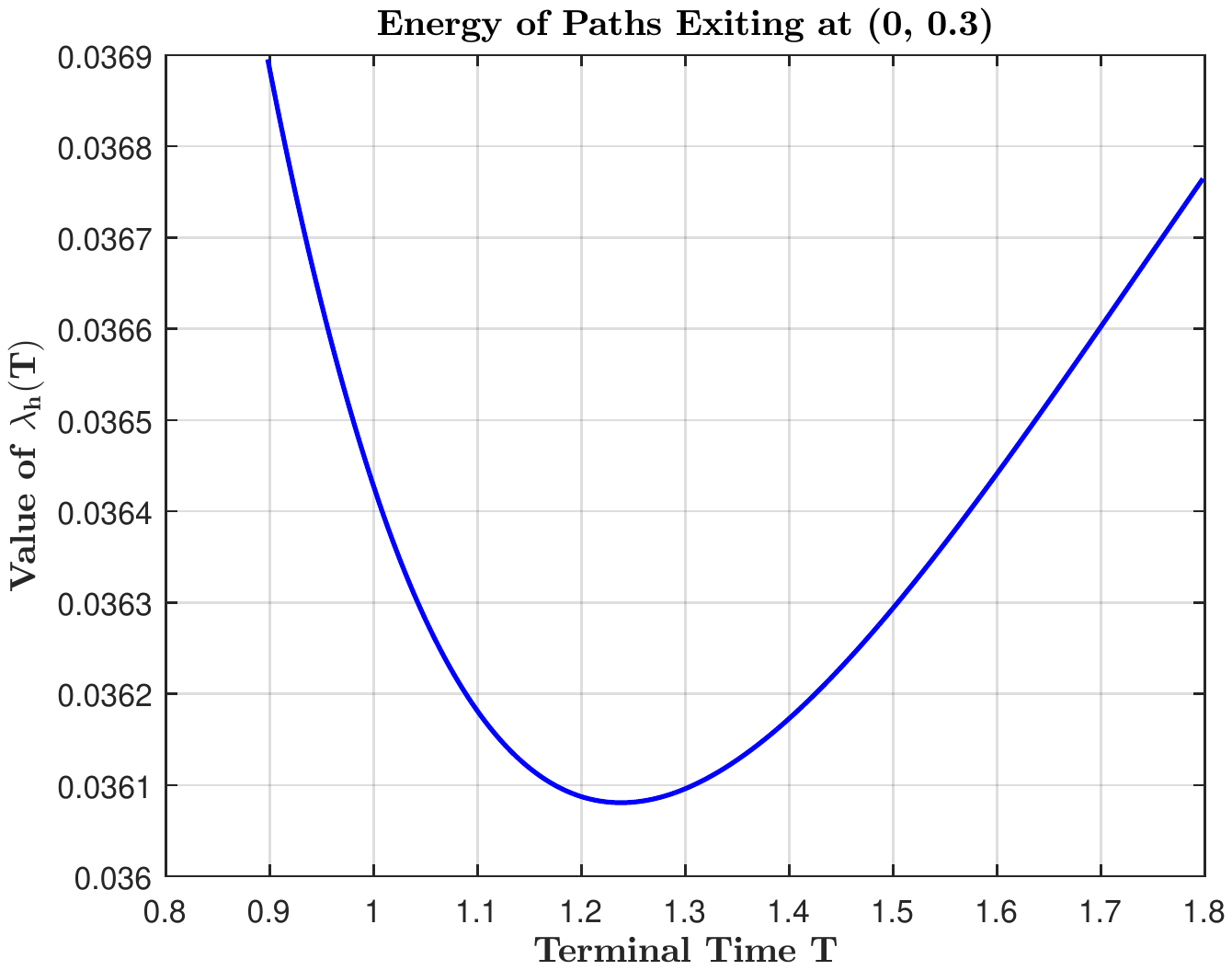}
\caption{}

\label{fig:EnergyFuncT}
\end{subfigure}
\\
\begin{subfigure}{.45\textwidth}
\centering
\includegraphics[scale = 0.5]{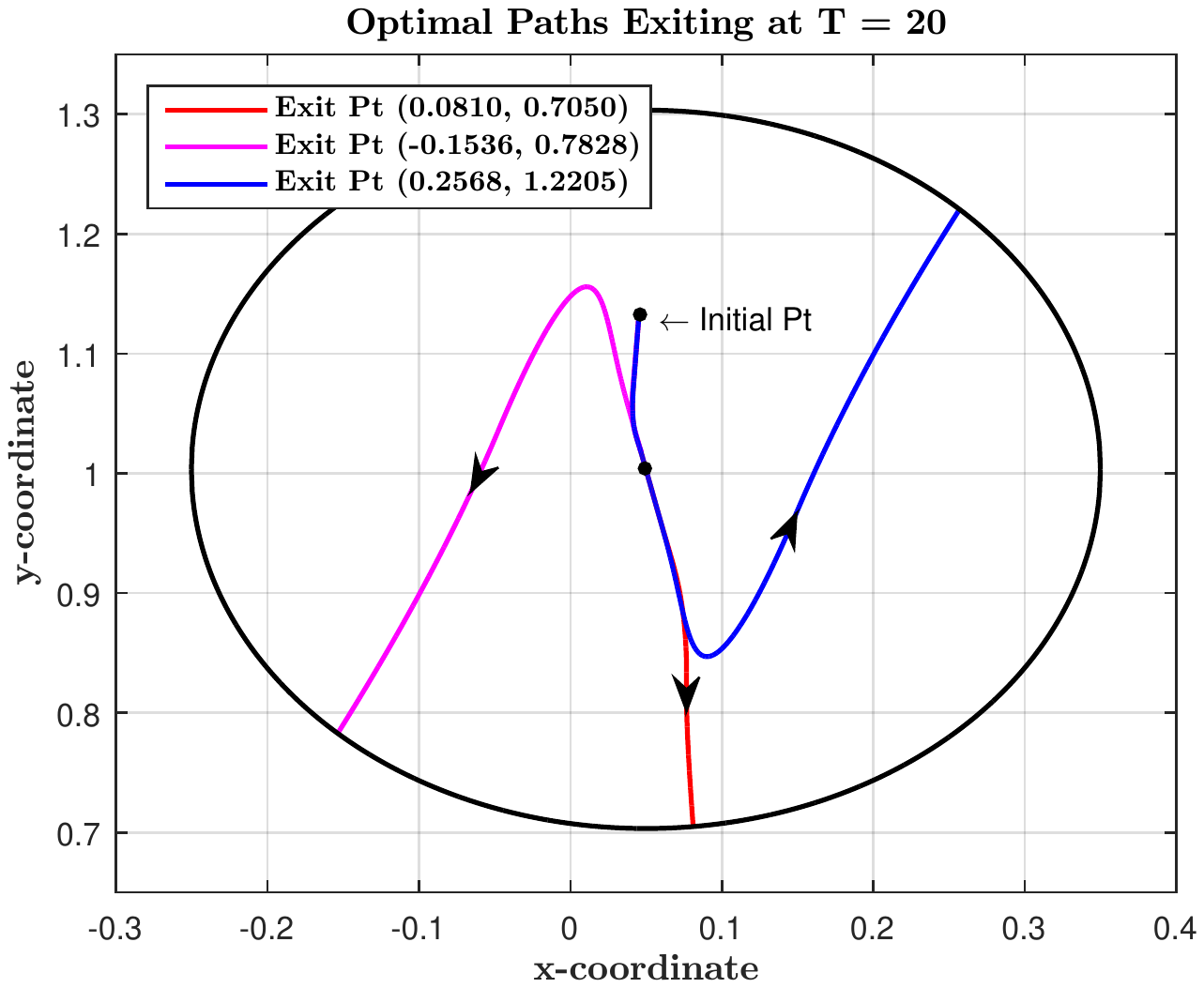}
\caption{}
\label{fig:OptTrajs}
\end{subfigure}%
\begin{subfigure}{.45\textwidth}
\centering
\includegraphics[scale = 0.5]{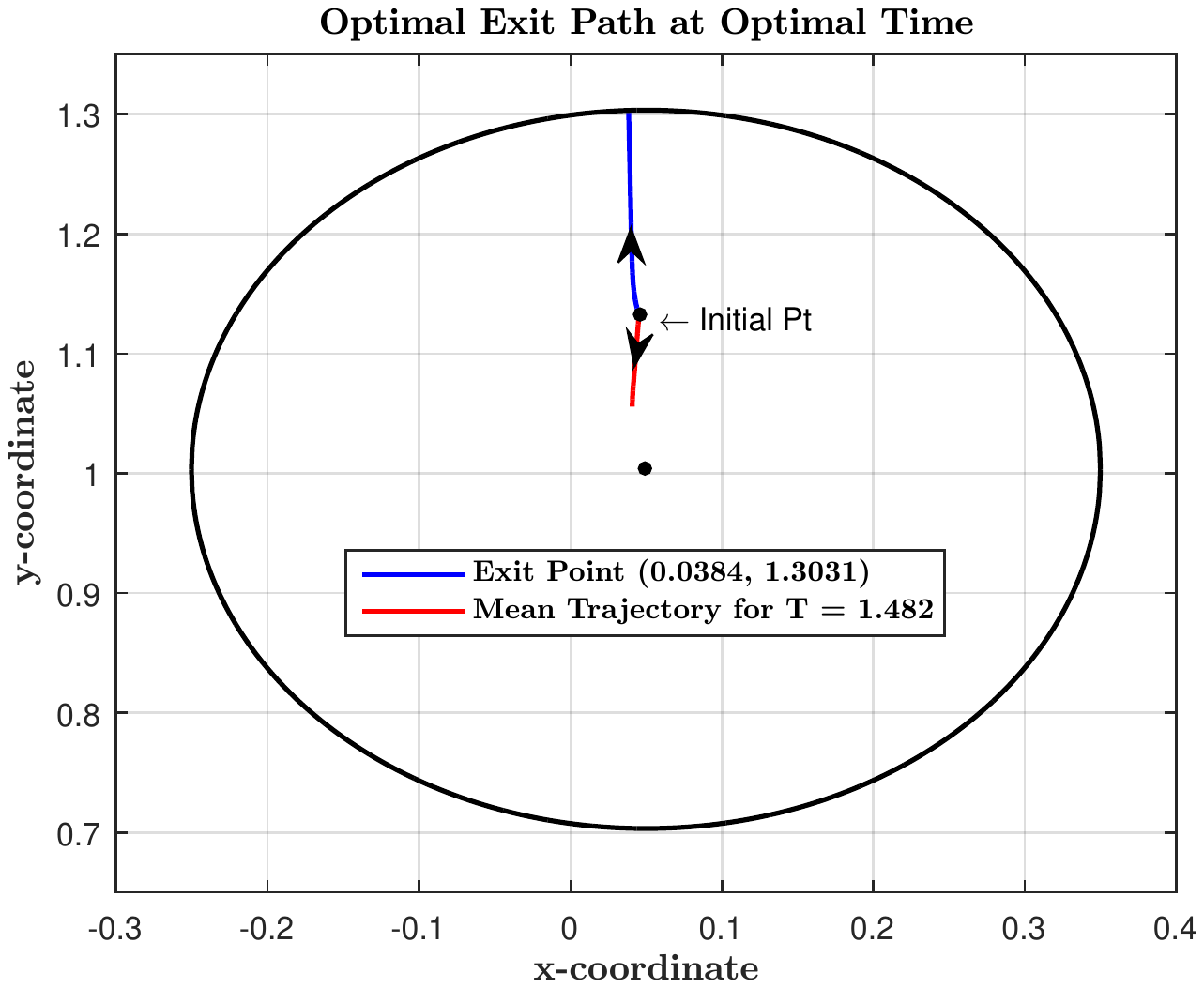}
\caption{}
\label{fig:BestTraj}
\end{subfigure}%
\caption{Optimal exit data.
{\bfseries (\ref{fig:EnergyFuncQ})}
Energy of the optimal exit path at time $T=10$ as a function of chosen exit point on the upper half of $\partial D$.
The energy is minimized near the top of $D$.
{\bfseries (\ref{fig:EnergyFuncT})}
Energy of the optimal exit path as a function of exit time $T$ for fixed exit point $(0,0.3)$ (the top of $D$ in local coordinates).
{\bfseries (\ref{fig:OptTrajs})}
Three different optimal escape paths for fixed escape time $T = 20$ and three different exit points.
Notice that these trajectories follow the mean for some time before breaking away toward their respective exits.
This behavior should not occur for the optimal exit time $T_{\text{opt}}$ and the optimal exit point $\hat{q} (T_{\text{opt}})$.
Energy values associated with the red, magenta, and blue trajectories are $0.0413$, $0.4527$, and $0.4661$, respectively. 
{\bfseries (\ref{fig:BestTraj})}
Overall optimal escape trajectory.
This trajectory exits at time $T_{\text{opt}} = 1.482$ and exit point $\hat{q} (T_{\text{opt}}) = (0.0384, 1.3031)$ with energy $0.0348$.}
\label{fig:Optimization}
\end{figure}

\newpage

\begin{figure}[ht]
\centering
\includegraphics[scale = 0.5]{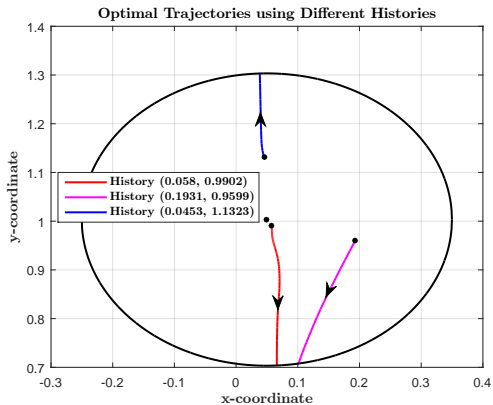}
\caption{Optimal escape trajectories from $D$ for the linear noise process using three different constant initial histories.
Notice that if the initial history is located in the lower half of $D$, then the optimal escape trajectory exits through the lower half of $\partial D$.
This happens for the upper half of $D$ as well.
The energy associated with the red, magenta, and blue trajectories is $0.0389,$ $0.0074,$ and $0.0348$, respectively.}
\label{fig:ExOptTrajs}
\end{figure}


\bibliographystyle{siam}
\bibliography{LDlit-v11,metastability-biochem-lit-v11}

\end{document}